\documentclass[12pt]{amsart}
\usepackage{amsmath, amssymb}
\usepackage{enumerate} 
\makeatletter
\@namedef{subjclassname@2020}{\textup{2020} Mathematics Subject Classification}
\makeatother
\newtheorem{theorem}{\bf Theorem}[section]
\newtheorem{lemma}[theorem]{\bf Lemma}
\newtheorem{proposition}[theorem]{\bf Proposition}
\newtheorem{corollary}[theorem]{\bf Corollary}

\newtheorem{example}[theorem]{\sc Example}

\newcommand{\N}{\mathbb{N}}
\newcommand{\al }{\alpha }
\newcommand{\pr }{\mathrm{Pr} }

\newcommand{\ep}{\epsilon}

\newcommand{\LL }{L }
\newcommand{\R }{Q }

\newcommand{\elp}{p}

\DeclareMathOperator{\aut}{Aut}
\DeclareMathOperator{\out}{Out}
\DeclareMathOperator{\PSL}{PSL}

\DeclareMathOperator{\Sym}{Sym}

\usepackage{color}

\begin{document}
\title[Commuting probability]{Coprime automorphisms of profinite groups and the commuting probability of  Sylow subgroups}

\author[E. Detomi]{Eloisa Detomi}
\address{Dipartimento di Matematica \lq\lq Tullio Levi-Civita\rq\rq, Universit\`a degli Studi di Padova, Via Trieste 63, 35121 Padova, Italy} 
\email{eloisa.detomi@unipd.it}
\author[R. M. Guralnick]{Robert M. Guralnick}
\address{ Department of Mathematics, University of
Southern California, Los Angeles, CA90089-2532, USA}
\email{guralnic@usc.edu}
\author[M. Morigi]{Marta Morigi}
\address{Dipartimento di Matematica, Universit\`a di Bologna\\
Piazza di Porta San Donato 5 \\ 40126 Bologna \\ Italy}
\email{marta.morigi@unibo.it}
\author[P. Shumyatsky]{Pavel Shumyatsky}
\address{Department of Mathematics, University of Brasilia\\
Brasilia-DF \\ 70910-900 Brazil}
\email{pavel@unb.br}

%\subjclass[2020]{20D20; 20E45; 20P05} 
% 20D20 Sylow subgroups, Sylow properties, π-groups, π-structure 
%20E45 conjugacy classes, 
%20P05 Probabilistic methods in group theory da [3]  20N99 None of the above, but in this section, in 20Nxx Other generalizations of groups, 
%\keywords{Commuting probability, Sylow subgroups, Simple groups}
\subjclass[2020]{20D20; 20D45; 20P05} 
% 20D20 Sylow subgroups, Sylow properties, π-groups, π-structure 
% 20D45 Automorphisms of abstract finite groups
%20P05 Probabilistic methods in group theory da [3]  20N99 None of the above, but in this section, in 20Nxx Other generalizations of groups, 
\keywords{Profinite groups, commuting probability, Sylow subgroups, coprime automorphisms}

\begin{abstract} Given two (closed) subgroups $H,K$ of a profinite group $G$, we write $\pr(H,K)$ for the probability that a random pair from $H\times K$ commutes. Here we are concerned with profinite groups containing Sylow subgroups $P,Q$ such that $\pr(P,Q)>0$. First, we show that if $G$ is a profinite group containing a Sylow $2$-subgroup $L$ and a Sylow $p$-subgroup $P$, where $p$ is odd, such that $\pr(L,P)$ is positive, then $G$ is virtually pro-$p$-soluble (Theorem \ref{main1}). Then we handle similar issues for profinite groups admitting coprime automorphisms. In particular, we prove that if $G$ is a profinite group admitting a group of coprime automorphisms $A$ such that there is a Sylow $2$-subgroup $L$ of $C_G(A)$ and an $A$-invariant Sylow $p$-subgroup $P$ of $G$, where $p$ is odd, for which $\pr(L,P)>0$, then $G$ is virtually pro-$p$-soluble (Theorem \ref{main3}). On the other hand, if we only have $\pr([L,A],[P,A])>0$, then $[G,A]$ need not be virtually pro-$p$-soluble. We show that in this case $[G,A]$ has an open normal subgroup of non-$p$-soluble length at most $1$ (Theorem \ref{main44}). Furthermore, if $P$ is an $A$-invariant Sylow $p$-subgroup of $G$ such that $\pr([P,A],[P,A]^x)>0$ for every $x\in G$, then $[G,A]$ has an open normal subgroup of non-$p$-soluble length at most $1$ (Theorem \ref{main5}).
\end{abstract}
 \maketitle

\section{Introduction}

Every compact group $G$ can be viewed as a probabilistic space using the normalized Haar measure. Given two (closed) subgroups $H,K\leq G$, we write $\pr(H,K)$ for the probability that a random pair from $H\times K$ commutes. Here we are concerned with profinite groups containing Sylow subgroups $P,Q$ such that $\pr(P,Q)>0$. The following theorem was established in \cite{MZ}.\medskip

\noindent{\bf Theorem A.} {\it Let $G$ be a profinite group containing a Sylow $2$-subgroup $L$, a Sylow $3$-subgroup $P$ and a Sylow $5$-subgroup $Q$ such that $\pr(L,P)$ and $\pr(L,Q)$ are both positive. Then $G$ is virtually prosoluble.}
\medskip

As usual, we say that a profinite group has a property virtually if it has an open subgroup with that property. Our first result in this paper extends Theorem A as follows.

\begin{theorem}\label{main1}
 Let $G$ be a profinite group containing a Sylow $2$-subgroup $L$ and a Sylow $p$-subgroup  $P$, where $p$ is an odd prime, 
  such that $\pr(L,P)$ is positive. Then $G$ is virtually pro-$p$-soluble.
\end{theorem}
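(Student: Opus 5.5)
Positivity of $\pr(L,P)$ will be converted into structural information in the standard way. If $\pr(L,P)>0$, a Baire category (Haar measure) argument shows there is an open subgroup $L_0\le L$, an open subgroup $P_0\le P$ and elements $a\in L$, $b\in P$ such that $[aL_0\cap \ldots]$; more precisely, the set $\{(x,y)\in L\times P : [x,y]=1\}$ has positive measure, so it contains a product of cosets $aL_0\times bP_0$. This yields open normal subgroups $L_0\trianglelefteq L$ and $P_0\trianglelefteq P$ with $[L_0,P_0]=1$ (a lemma of this type is known from \cite{MZ}). Passing to an open normal subgroup $N$ of $G$ with $N\cap L\le L_0$ and $N\cap P\le P_0$, we get a Sylow $2$-subgroup $L_1=L\cap N$ and a Sylow $p$-subgroup $P_1=P\cap N$ of $N$ that commute elementwise. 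So it suffices to prove the following claim: if a profinite group $N$ has a Sylow $2$-subgroup and a Sylow $p$-subgroup ($p$ odd) that commute, then $N$ is pro-$p$-soluble (or at least virtually so).

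Since pro-$p$-solubility of a profinite group is equivalent to that of all its finite continuous images, and the images of $L_1,P_1$ in $N/M$ are commuting Sylow subgroups, we reduce to the case of a finite group $N$ with a Sylow $2$-subgroup $S$ and a Sylow $p$-subgroup $T$ such that $[S,T]=1$. I would show that $N$ is $p$-soluble. Take a minimal counterexample. The hypothesis is inherited by quotients (the images of $S$ and $T$ are Sylow) and by normal subgroups (intersections with $S$ and $T$). Hence $N$ has a non-abelian chief factor $R/K=V_1\times\dots\times V_k$, where the $V_i$ are isomorphic simple groups of order divisible by $p$ (and by $2$, by Feit--Thompson). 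Intersecting with $R$ and projecting, a simple group $V$ involved this way inherits commuting Sylow $2$- and $p$-subgroups. More carefully, the images of $S\cap R$ and $T\cap R$ in a factor $V_i$ must be Sylow in $V_i$ and commute.

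The key step, and the main obstacle, is the claim that no nonabelian finite simple group $V$ has a Sylow $2$-subgroup $S$ centralizing a nontrivial Sylow $p$-subgroup $T$ for odd $p$ dividing $|V|$. If this held, $T\le C_V(S)$, and I would invoke known results on centralizers of Sylow $2$-subgroups in simple groups. For instance, when $S$ is self-centralizing modulo $Z(S)$, that is, $C_V(S)=Z(S)$, which is a $2$-group, the claim follows. More generally, $C_V(S)\le O(C)\times Z(S)$ and $S$ is contained in $C_V(T)$. Alternatively, I would use the classification-based fact that in a nonabelian simple group $C_V(S)$ is a $2$-group (a result that can be extracted from the classification, or from Glauberman's $Z^*$ theorem together with a check of the Sylow structure). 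A direct route would be: $N_V(S)$ contains $S\times T$, so $p$ divides $|N_V(S):S|$, and $T$ centralizes $S$. By known results (Kondrat'ev, Guralnick–Malle–Navarro), in a simple group $N_V(S)=S\times O(\cdot)$ occurs only for specific families, and even there $C_V(S)=Z(S)$. This is the step I would verify via the classification, checking alternating, Lie type and sporadic groups. For alternating groups it is elementary: a Sylow $2$-subgroup of $A_n$ moves all but at most one or two points, so no element of order $p$ commutes with it.

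With the simple case done, each $V_i$ would be a $p'$-group, contradicting the choice of the factor. Hence the finite images are $p$-soluble, so $N$ is pro-$p$-soluble and $G$ is virtually pro-$p$-soluble.
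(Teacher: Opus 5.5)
Your argument has two genuine gaps: the first step is unproved as you justify it, and your proposed proof of the simple-group step rests on a false claim.

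\textbf{The first step.} From $\pr(L,P)>0$ you only know that the closed set $C=\{(x,y)\in L\times P : [x,y]=1\}$ has positive Haar measure. A closed set of positive measure in a profinite group need not contain any open set; think of a fat Cantor-type subset of $\mathbb{Z}_2$. So the inference that $C\supseteq aL_0\times bP_0$ is invalid as stated. A Steinhaus/Baire argument gives at best an open subgroup $L_1\le L$ with $|P:C_P(x)|\le m$ for every $x\in L_1$. Passing from this elementwise bound to one pair of open subgroups with $[L_0,P_0]=1$ is a Neumann/BFC-type theorem for two arbitrary subgroups. That is nontrivial, and you neither prove it nor give a precise citation. (If you had it, your chief-factor argument would work and even give a stronger conclusion.)

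The paper needs no such statement. Take a finite image with trivial $p$-soluble radical, so $F^*=S_1\times\dots\times S_t$ with $p$ dividing each $|S_i|$. Then $\pr$ is multiplicative over the factors. Each factor contributes at most $3/4$ because $[L\cap S_i,P\cap S_i]\ne 1$. Hence $t\le \log_{3/4}\eta$. Lemma~\ref{lambda_total} then bounds $\lambda_p$, and Wilson's theorem gives a series of bounded length whose non-pro-$p$-soluble factors are Cartesian products of boundedly many simple groups, hence finite. Induction finishes.

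\textbf{The simple-group step.} You correctly isolate the needed statement; it is exactly Corollary~\ref{cor-simple-p-q}. However, the routes you suggest depend on a false claim. It is not true that $C_V(S)$ is a $2$-group, let alone equal to $Z(S)$, for a Sylow $2$-subgroup $S$ of a simple group $V$. Take $V=\mathrm{SL}_3(11)$, which is simple since $\gcd(3,10)=1$. A Sylow $2$-subgroup lies in $\{\mathrm{diag}(A,\det A^{-1}) : A\in\mathrm{GL}_2(11)\}$, since both groups have $2$-part $2^4$. It is centralized by the non-scalar element $\mathrm{diag}(\lambda,\lambda,\lambda^{-2})$ with $\lambda\in\mathbb{F}_{11}^\times$ of order $5$.

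So you really need a statement about full Sylow $p$-subgroups, not odd-order elements. The paper gets it from the theorem of Beltr\'an et al.: a simple group of order divisible by an odd prime $p$ has a class of $p$-elements of even size. If a Sylow $p$-subgroup centralized a Sylow $2$-subgroup, every $p$-element would centralize some Sylow $2$-subgroup, and every $p$-class would have odd size, a contradiction.
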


Recall that a finite group is $p$-soluble if its composition factors are either $p$-groups or $p'$-groups. Thus, we say that a profinite group is pro-$p$-soluble if it is an inverse limit of finite $p$-soluble groups. Theorem \ref{main1} is indeed an extension of Theorem A, since any finite group that is both 3-soluble and 5-soluble is in fact soluble (this is straightforward from Thompson's classification of minimal insoluble groups \cite{thompson}).

We say that a (continuous) automorphism $\al$ of a profinite group $G$ is coprime if it has finite order and induces a coprime automorphism on every $\al$-invariant finite homomorphic image of $G$, that is, $(|G/N|,|\alpha|) = 1$ for every $\al$-invariant open normal subgroup $N$ of $G$.

Suppose a profinite group $G$ admits a finite group of coprime automorphisms $A$. 
If $H\leq G$ is an $A$-invariant subgroup, we sometimes will write $H_A$ to denote $H \cap C_G(A)$, 
 where $C_G(A)$ is the set of elements of $G$ fixed by all automorphisms in $A$. 
 It is well-known that any Sylow subgroup of $C_G(A)$ is of the form $P_A$, where $P$ is an $A$-invariant Sylow subgroup of $G$. 

Observe that even if $C_G(A)$ is soluble, the group $G$ need not be virtually prosoluble. Indeed, let $G=\prod_{i\in I} S_i$ be a Cartesian product of infinitely many groups $S_i$, all isomorphic to $\PSL(2,2^5)$. Let $\alpha$ be an automorphism of $G$ which acts on each $S_i$ as  a field automorphism  of order $5$. Then $\alpha$ is a coprime automorphism of $G$ whose centralizer in $G$ is soluble, being the Cartesian product of infinitely many groups isomorphic to $\PSL(2,2)$, but $G$ is not virtually prosoluble.

In view of Theorem A and the above example, the following theorem is of interest. 

\begin{theorem}\label{main2} Let $G$ be a profinite group admitting a finite group of coprime automorphisms $A$ and containing $A$-invariant Sylow $2$-subgroup $L$, Sylow $3$-subgroup $P$ and Sylow $5$-subgroup $Q$ such that $\pr(L_A,P_A)$ and $\pr(L_A,Q_A)$ are both positive. Then $G$ is virtually prosoluble.
\end{theorem}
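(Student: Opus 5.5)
Our plan is to reduce Theorem \ref{main2} to a statement about finite groups, using the standard transfer principle for positive commuting probability, and then to show that nonabelian composition factors of $G$ must be rare.

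\emph{Step 1: open subgroups from the probability hypothesis.} By the Neumann-type result for profinite groups (if $\pr(H,K)>0$ then there are open subgroups $H_0\le H$, $K_0\le K$ with $[H_0,K_0]$ finite), we get open subgroups $L_0\le L_A$, $P_0\le P_A$, $Q_0\le Q_A$ such that $[L_0,P_0]$ and $[L_0,Q_0]$ are finite. Passing to a suitable $A$-invariant open normal subgroup $N$ of $G$ that meets these finite commutator subgroups trivially and is contained in the open sets involved, we may assume that $L_A\cap N$ commutes with $P_A\cap N$ and with $Q_A\cap N$. Here $L\cap N$, $P\cap N$, $Q\cap N$ are $A$-invariant Sylow subgroups of $N$, and by coprimeness their fixed points are Sylow subgroups of $C_N(A)$. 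It then suffices to prove: \emph{if $N$ is a profinite group with coprime automorphism group $A$ and $A$-invariant Sylow $2$-, $3$-, $5$-subgroups whose centralizers $L_A,P_A,Q_A$ satisfy $[L_A,P_A]=[L_A,Q_A]=1$, then $N$ is prosoluble.} This is an inverse limit statement, so we reduce to finite $A$-invariant quotients. Coprime action guarantees that the images of $L_A$, $P_A$, $Q_A$ are the fixed-point Sylow subgroups in each quotient.

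\emph{Step 2: finite case, reduction to a characteristically simple section.} Let $N$ be a minimal counterexample among finite groups. The commuting hypothesis passes to $A$-invariant sections, since fixed points map onto fixed points under coprime action. So we may take a nonabelian $A$-invariant minimal normal subgroup $M=S_1\times\dots\times S_k$ of $N$. Then $A$ permutes the factors. Taking orbit representatives and using $C_M(A)\cong\prod C_{S_i}(B_i)$, where $B_i$ is the stabiliser acting on $S_i$, we reduce to a single simple group $S$ with a coprime automorphism group $B$. Such a $B$ consists of field automorphisms, up to inner-diagonal, of a group of Lie type. The Sylow $2$-, $3$-, $5$-subgroups $L_B,P_B,Q_B$ of $C_S(B)$ must satisfy $[L_B,P_B]=[L_B,Q_B]=1$. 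Now $C_S(B)$ contains a group of the same Lie type over the fixed subfield, which is nonsoluble except in a few small cases such as $\PSL(2,2)$, $\PSL(2,3)$, ${}^2B_2(2)$ and $SU(3,2)$. The key point is that in a nonsoluble group $C_S(B)$, a Sylow $2$-subgroup cannot centralize both a Sylow $3$- and a Sylow $5$-subgroup. This would make the Sylow $2$-subgroup centralize $O^{\{2,3,5\}'}$-type generation, and one argues via Theorem A's finite version. When $C_S(B)$ is soluble and small, the hypothesis fails for a direct reason: $L_B$ must commute with nontrivial $P_B$ or $Q_B$ inside, e.g., $\PSL(2,2)\cong S_3$, where the involution does not centralize the $3$-element.

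\emph{Main obstacle.} The hardest step is the case analysis on simple groups with coprime automorphisms. We must show that for every such $(S,B)$ at least one of the commuting conditions fails. This includes degenerate cases where $|C_S(B)|$ is coprime to $3$ or $5$, making $P_B$ or $Q_B$ trivial. For instance, $S={}^2B_2(q)$ has order coprime to $3$, so the $3$-condition is vacuous there, and we need the $5$-condition: $5$ divides $|{}^2B_2(q)|$ always, including $5\mid |{}^2B_2(2)|=20$, where the involution does not centralize the Sylow $5$-subgroup. I would handle this using the classification of coprime automorphisms as field automorphisms together with the structure of small Lie-type groups over prime fields, checking $\PSL(2,2^r)$, $\PSL(2,3^r)$ and Suzuki groups explicitly. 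The remaining groups have nonsoluble $C_S(B)$ and are handled by the finite version of Theorem A.
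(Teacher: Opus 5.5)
Your argument is correct in outline but is organized differently from the paper's.

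\textbf{How the paper proceeds.} The paper never obtains genuine commutation; it works quantitatively. In each finite $A$-quotient, an $A$-invariant normal subgroup $S_1\times\dots\times S_t$ has fixed-point Sylow subgroups whose two commuting probabilities have product at most $(3/4)^{t/|A|}$ (Lemma \ref{product_prob}). This uses the simple-group input together with the elementary facts that $[X,Y]\ne 1$ implies $\pr(X,Y)\le 3/4$ and that $\pr$ is multiplicative over $A$-orbits. So $t$ is bounded in terms of $\eta$ and $|A|$. Lemma \ref{lambda_total} then bounds the insoluble length of finite quotients. Wilson's theorem gives a normal series of bounded length whose factors are prosoluble or Cartesian products of simple groups, and the latter are finite. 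An induction using ``finite-by-prosoluble implies virtually prosoluble'' finishes.

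\textbf{How your route differs.} You front-load the probability. A Neumann-type theorem gives an open $A$-invariant normal $N$ on which the fixed-point Sylow subgroups literally commute. That condition passes to all $A$-invariant sections, so only a qualitative finite statement is needed, and it is exactly the content of Lemma \ref{cent-small} and Corollary \ref{cor-simple-p-q}. What this buys you: you avoid Wilson's theorem, Lemma \ref{lambda_total} and the induction, and you exhibit an explicit open prosoluble subgroup. What the paper's route buys: it needs only the elementary $3/4$ bound, and it yields explicit $(\eta,|A|)$-bounds, running parallel to the proof of Theorem \ref{main3}.

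\textbf{First point to tighten: the pair version of Neumann's theorem.} Your Step 1 uses Neumann's theorem for an \emph{arbitrary pair} $H,K$. This is not the classical statement: Neumann and L\'evai--Pyber treat $\pr(H,H)$, and Detomi--Shumyatsky treat $\pr(K,G)$. So you need a reference or a proof. The statement does hold, by the following argument.
\begin{enumerate}
\item From $\pr(H,K)\ge\epsilon$, the elements with boundedly many conjugates are dense and generate in boundedly many steps. This gives bounded-index $H_1\le H$ and $K_1\le K$ whose elements have boundedly many conjugates under $K_1$, respectively $H_1$.
\item Take $h\in H_1$ with $|h^{K_1}|=m$ maximal, write $h^{K_1}=\{h^{k_1},\dots,h^{k_m}\}$, and set $H_0=C_{H_1}(k_1,\dots,k_m)$. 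Applying maximality to $hh'$ for $h'\in H_0$ shows that the commutators $[k,y]$ with $k\in K_1$, $y\in H_0$ take at most $m^2$ values.
\item By the usual commutator identities, each such commutator has boundedly many $H_0$- and $K_1$-conjugates. Centralizing this finite set gives bounded-index $H_2,K_2$ with $[K_2,H_2]$ abelian of bounded order.
\item An inverse-limit argument then gives the profinite version.
\end{enumerate}

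\textbf{Second point to tighten: the nonsoluble case.} Your justification when $C_S(B)$ is nonsoluble (via ``$O^{\{2,3,5\}'}$-type generation'' and an unspecified finite version of Theorem A) is not an argument. Replace it as follows. Pass to a nonabelian chief factor of $C_S(B)$ and project onto a simple factor $T$; the images of your Sylow subgroups are Sylow subgroups of $T$ and still commute. One of $3$ or $5$ divides $|T|$, and Corollary \ref{cor-simple-p-q} applied with that prime gives the contradiction.
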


Note that a there is no direct analogue of Theorem \ref{main2} related to Theorem \ref{main1}, that is, if a profinite group $G$ admits a coprime automorphism $\al$ and contains $\al$-invariant Sylow $2$-subgroup $L$ and Sylow $p$-subgroup $P$ such that $\pr(L_\al,P_\al)>0$, then $G$ need not be virtually pro-$p$-soluble. For example, let again $G=\prod_{i\in I} S_i$ be the Cartesian product of infinitely many groups $S_i$, all isomorphic to $\PSL(2,2^5)$, and let $\alpha$ be an automorphism of $G$ which acts on each $S_i$ as  a field automorphism  of order $5$. 
Observe that $|G|$ is divisible by 11 while $C_G(\al)$ is a pro-$\{2,3\}$ group. Choose $\al$-invariant Sylow 2-subgroup $L$ and Sylow 11-subgroup $P$. Note that $P_\al=1$ and therefore $\pr(L_\al,P_\al)=1$ while $G$ is not pro-11-soluble.

As a counterweight to the above example, we have the following theorem.

\begin{theorem}\label{main3}
Let $G$ be a profinite group admitting a finite group of coprime automorphisms $A$. Assume that $G$ has $A$-invariant Sylow $2$-subgroup $L$ and Sylow $p$-subgroup $P$, where $p$ is an odd prime,  such that $\pr(L_A,P)>0$. Then $G$ is virtually pro-$p$-soluble.  
\end{theorem}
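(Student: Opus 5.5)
We first use positivity of $\pr(L_A,P)$ to get large commuting subgroups. By the standard Neumann-type lemma for profinite groups (as in \cite{MZ}), if $\pr(H,K)>0$ for closed subgroups $H,K$, there are an open subgroup $H_0\le H$ and an element $b\in K$ such that $[H_0,K_0]=1$ with $K_0$ open in $K$. We intend to apply it to $L_A$ and $P$ and then average over cosets, to find open subgroups $L_0\le L_A$ and $P_0\le P$ with $[L_0,P_0]=1$. Replacing $L_0,P_0$ by the intersections of their finitely many $A$-images, we may assume both are $A$-invariant, since $A$ is finite and $L_A$, $P$ are $A$-invariant. Next we pass to an open $A$-invariant normal subgroup $N$ of $G$ such that $N\cap P\le P_0$ and $N\cap L_A\le L_0$; such $N$ exists by profiniteness and finiteness of $A$. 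Then $N\cap L$ is an $A$-invariant Sylow $2$-subgroup of $N$, and $(N\cap L)_A$ is a Sylow $2$-subgroup of $C_N(A)$. Moreover $(N\cap L)_A$ centralizes the Sylow $p$-subgroup $N\cap P$ of $N$. It therefore suffices to show that such an $N$ is pro-$p$-soluble. Since $N$ is an inverse limit of finite $A$-invariant quotients $N/M$, and the images of Sylow subgroups and of centralizers of coprime $A$ behave well in quotients ($C_{N/M}(A)$ is the image of $C_N(A)$), we reduce to the following finite statement.

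\emph{Finite claim.} Let $G$ be a finite group admitting a coprime group of automorphisms $A$, with $A$-invariant Sylow subgroups $L$ (for $2$) and $P$ (for odd $p$), such that $[L_A,P]=1$. Then $G$ is $p$-soluble.

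We prove the finite claim by a minimal counterexample. The hypotheses pass to $A$-invariant normal subgroups and quotients, so a minimal counterexample has a unique minimal $A$-invariant normal subgroup $M=S_1\times\dots\times S_k$, with nonabelian simple $S_i$ of order divisible by $p$ permuted transitively by $A$. In fact we may take $G=M$ after replacing $G$ by $M$. Then $C_M(A)\cong C_{S_1}(B)$, where $B=N_A(S_1)$ acts coprimely on $S_1$. Indeed, the centralizer in a transitively permuted product is a "diagonal" copy of the stabilizer centralizer. Under this identification, $L_A$ corresponds to a Sylow $2$-subgroup of $C_{S_1}(B)$ embedded diagonally, and it commutes with all of $P=\prod (P\cap S_i)$. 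So we reduce to the case of a simple group $S$ with a coprime automorphism group $B$, a $B$-invariant Sylow $p$-subgroup $P_1$, and a Sylow $2$-subgroup $T$ of $C_S(B)$ centralizing $P_1$.

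Here we use the classification. A coprime automorphism group of a simple group consists of field automorphisms, so $S$ is of Lie type over $\mathbb{F}_{q}$ with $q=q_0^{r}$, and $C_S(B)$ contains the corresponding group of the same type over a subfield, of even order. Then $T\ne1$, and $T$ centralizes a full Sylow $p$-subgroup $P_1\ne1$ of $S$. The main obstacle is to rule this out uniformly across all Lie types and all odd $p$, including $p$ the defining characteristic. We first try this: pick an involution $t\in T$ lying in the subfield subgroup. Then $P_1\le C_S(t)$, so $p$ does not divide $|S:C_S(t)|$. Involution centralizers in groups of Lie type are known, and the index $|S:C_S(t)|$ is divisible by every prime dividing $|S|$ except those in a small set. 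When $p$ is the characteristic this fails because the unipotent radical is not contained in $C_S(t)$. In odd characteristic the centralizer is reductive of smaller rank, so some cyclotomic factor, via a Zsigmondy prime argument, is missing. We have to check the leftover small cases, such as $\PSL(2,q)$, by hand, using that $C_S(t)$ is dihedral there and cannot contain a full Sylow $p$-subgroup for odd $p$ dividing $|S|$. Note also that Theorem \ref{main1} itself is the case $A=1$, so the same finite analysis covers it. Once this is done the minimal counterexample cannot exist, and the theorem follows.
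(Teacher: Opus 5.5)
Your reduction is sound and differs from the paper's route. You extract open subgroups $L_0\le L_A$ and $P_0\le P$ with $[L_0,P_0]=1$. This is true, although the lemma as you quote it is garbled: a density argument on the closed set of commuting pairs gives a box $hU\times kV$ in which more than $3/4$ of the pairs commute, and from that $[U,V]=1$ follows. You then pass to an open $A$-invariant normal $N$, reduce to an exact-commutation statement about finite groups, and finally to one simple factor $S$ with $B=N_A(S_1)$. This avoids the paper's quantitative bound $\pr\le (3/4)^{t/a}$ (Lemma \ref{product_prob}) and Wilson's theorem. Both routes, however, rest on the simple-group fact of Theorem \ref{thm-simple-coprime}: no Sylow $p$-subgroup of $S$ centralizes a Sylow $2$-subgroup $T$ of $C_S(B)$. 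Your argument for this step has a genuine gap.

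\begin{enumerate}
\item[(i)] You only treat $B$ acting nontrivially on $S$. When $B$ acts trivially, $S$ can be alternating or sporadic, and your Lie-type analysis says nothing. This includes every factor when $A=1$, so your remark that the same analysis covers Theorem \ref{main1} does not hold. The paper obtains this case (Corollary \ref{cor-simple-p-q}) from the theorem of \cite{BFMMNSST} that $S$ has a conjugacy class of $p$-elements of even size.
\item[(ii)] A single-involution argument cannot give a contradiction, because $P_1\le C_S(t)$ for one involution $t$ is often true. In $S=\PSL(2,q)$ with $q$ odd and $q\equiv\varepsilon\pmod 4$, the centralizer $C_S(t)$ is dihedral of order $q-\varepsilon$. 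It contains a full Sylow $r$-subgroup of $S$ for every odd prime $r\mid q-\varepsilon$. For example, in $\PSL(2,11)$ one has $C_S(t)\cong D_{12}$ and $|S:C_S(t)|=55$, so a Sylow $3$-subgroup centralizes an involution. It does not centralize the self-centralizing Klein four Sylow $2$-subgroup. So your ``hand check'' for $\PSL(2,q)$ is false. The Zsigmondy remark only shows that some primes divide $|S:C_S(t)|$, not that the given $p$ does.
\item[(iii)] Characteristic $2$ with $p$ odd is not addressed at all.
\end{enumerate}

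What is actually needed is that $P_1$ centralizes all of $T$, not just one involution. The paper proves this in two cases. When $q_0$ is odd, Lemma \ref{lemma-simple-coprime} shows that $T$ is a Sylow $2$-subgroup of $S$, and Corollary \ref{cor-simple-p-q} applies. When $q_0$ is even, $T$ contains a regular unipotent element of $L(q_0)$, whose centralizer in $S$ is a $2$-group.
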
 

Note that Theorem \ref{main1} is a special case of  Theorem \ref{main3}, where $A=1$. 

Recall that if a group $G$ admits a group of automorphisms $A$, the subgroup generated by the elements $x^{-1}x^\al$, where $x$ runs over $G$ and $\al$ over $A$, is denoted by $[G,A]$. This is an $A$-invariant normal subgroup of $G$. 

Following \cite{KhSh15}, we say that a finite group $G$ has non-$p$-soluble length $\lambda_p(G)$ at most $k$ if there is a normal series all of whose quotients are either $p$-soluble or direct products of nonabelian simple groups of order divisible by $p$ with at most $k$ non-$p$-soluble quotients. In particular, $\lambda_p(G)=0$ if and only if $G$ is $p$-soluble. We say that a profinite group $G$ has non-$p$-soluble length $\lambda_p(G)$ at most $k$ if $G$ is an inverse limit of finite groups with that property. It follows that 
such a group possesses a normal series all of whose quotients are either pro-$p$-soluble or Cartesian products of nonabelian finite simple groups of order divisible by $p,$ with at most $k$ non-pro-$p$-soluble quotients (see Wilson's paper \cite{Wilson}).

Suppose a profinite group $G$ admits a group of coprime automorphisms $A$ such that there is an $A$-invariant Sylow $2$-subgroup $L$ and  an $A$-invariant Sylow $p$-subgroup $P$, where $p$ is an odd prime, for which $\pr([L,A],[P,A])>0$. In view of our previous experience -- in particular, Theorem \ref{main1} -- it would be natural to expect that $[G,A]$ is virtually pro-$p$-soluble. However, this turns out to be false and $[G,A]$ need not be virtually pro-$p$-soluble (see  Example \ref{es1}).  Therefore the following theorem is in a sense best possible.

\begin{theorem}\label{main44}
Let $G$ be a profinite group admitting a finite group of coprime automorphisms $A$ such that there is an $A$-invariant Sylow $2$-subgroup $L$ and an $A$-invariant Sylow $p$-subgroup $P$, where $p$ is an odd prime, for which $\pr([L,A],[P,A])>0$. Then $[G,A]$ has an open normal subgroup of non-$p$-soluble length at most $1$. 
 \end{theorem}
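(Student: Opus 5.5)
The plan is to reduce the positive-probability hypothesis to a statement about finite quotients. Then the question of non-$p$-soluble simple factors becomes a question about sections of $[G,A]$ that $A$ permutes.

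\textbf{Step 1: from probability to a large abelian-type configuration.} By the standard Neumann-type argument for profinite groups (as used in \cite{MZ}), $\pr([L,A],[P,A])>0$ yields open subgroups $L_0\le [L,A]$ and $P_0\le [P,A]$ such that $[L_0,P_0]$ is small. I would take $L_0,P_0$ to be $A$-invariant, intersecting finitely many $A$-conjugates if necessary. In fact we can arrange that some open subgroup $L_1\le[L,A]$ and $P_1\le [P,A]$ satisfy $[L_1,P_1]=1$ after passing to a further open subgroup: positive probability gives an open subgroup of $L_1$ centralizing an open subgroup of $P$, with index bounded in terms of $\pr$. Since $[G,A]$ is generated by $[L,A]$-type and $[P,A]$-type pieces only up to the action, I would work in a finite quotient $\bar G=G/N$ with $N$ open, normal and $A$-invariant. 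All bounds must be uniform in $N$, because $\lambda_p\le 1$ for an open normal subgroup is an inverse-limit property. So the goal is: there is an $A$-invariant normal subgroup $M$ of $[\bar G,A]$ of index bounded independently of $N$ with $\lambda_p(M)\le 1$.

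\textbf{Step 2: structure of the nonabelian chief factors.} Take a non-$p$-soluble $A$-invariant chief factor $S_1\times\dots\times S_m$ of $\bar G$ lying in $[\bar G,A]$. Group the $S_i$ into $A$-orbits. On an orbit where $A$ acts nontrivially by permutation, $[L,A]$ and $[P,A]$ project onto "diagonal-difference" subgroups, which contain full Sylow subgroups of all but one factor in the orbit. The commuting hypothesis then forces the number of such factors, and of the orbits involved, to be bounded. This uses Theorem \ref{main1} applied factorwise: a Sylow $2$- and Sylow $p$-subgroup of a simple group of order divisible by $p$ cannot commute to a large extent. On orbits where $A$ normalizes each $S_i$, the elements $[S_i,A]$ may be trivial, so $S_i$ can be centralized by $A$. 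These factors are unconstrained, and this is exactly what the counterexample (Example \ref{es1}) exploits.

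\textbf{Step 3: assembling the bound.} Using Step 2, I would show that after passing to a bounded-index $A$-invariant normal subgroup of $[\bar G,A]$, every non-$p$-soluble chief factor consists of simple factors $S$ normalized by $A$ with $[S,A]$ "small". I would then argue that these factors cannot be stacked, via a Hall--Higman-type/Thompson-type argument as in \cite{KhSh15}. Here is why. If two non-$p$-soluble layers occurred in $[\bar G,A]$, then a Sylow $p$-subgroup of the upper layer generated by commutators with $A$ would act faithfully on the lower layer. The coprime action ($[G,A]=[G,A,A]$ and $P=P_A[P,A]$) together with the commuting condition for $[L,A]$ would then produce many non-commuting pairs, contradicting the bound. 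Concretely, the key step is this: the image of $[P,A]$ in $\mathrm{Out}$ of a lower factor is controlled, since outer automorphism groups of simple groups are soluble by the Schreier conjecture (via CFSG). So a second non-$p$-soluble layer must be generated inside $[\bar G,A]$ through permutations of factors, and these were bounded in Step 2.

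\textbf{Main obstacle.} I expect the hardest step to be Step 3: showing that $\lambda_p$ drops to at most $1$, rather than merely being bounded, using only information about $[L,A]$ and $[P,A]$. I would first try to show that the product of the $A$-fixed non-$p$-soluble layers is covered by a single layer of the series. That requires controlling how $[\bar G,A]$ acts on the normalized factors. Specifically, I would want each such $S$ to be centralized modulo a bounded set by $[\bar G,A]$-elements coming from $[L,A]$ and $[P,A]$, using $[G,A]=\langle [L,A]^g,[P,A]^g\rangle$ over Sylow subgroups for all primes.
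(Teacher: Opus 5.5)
Step 1 is not needed, and the factorwise fact you want in Step 2 is Corollary \ref{cor-simple-p-q} rather than Theorem \ref{main1}. Step 2 is essentially Lemmas \ref{transitive} and \ref{non-transitive}, and your idea can be made precise. Suppose $A$ permutes $S_1,\dots,S_t$ transitively with $t\ge 2$. Using $[w,\beta]$ with $w\in L\cap S_t$ and $S_t^\beta=S_i$, one gets $[L,A]S_t/S_t=LS_t/S_t$, and likewise for $P$. Lemma \ref{quot} then gives $\pr([L,A],[P,A])\le (2/3)^{t-1}$ directly. Only this projection statement holds: $[L,A]$ need not contain $L\cap S_i$, e.g.\ for abelian $L$ with $A$ regular on the orbit. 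With multiplicativity over $A$-orbits, this bounds the number $m$ of factors moved by $A$, as in the paper.

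The genuine gap is Step 3, and you have placed the difficulty in the wrong spot. Getting $\lambda_p\le 1$ costs nothing. In a finite $A$-quotient with $R_p(G)=1$ and $F^*(G)=S_1\times\dots\times S_t$, let $K_p(G)$ be the kernel of the action of $G$ on $\Omega=\{S_1,\dots,S_t\}$; the Schreier conjecture gives $\lambda_p(K_p(G))\le 1$. So the whole problem is to bound $|[G,A]K_p(G)/K_p(G)|$. This is the order of a permutation group on $\Omega$ normalized by $A$, and all you know is that $A$ moves at most $m$ points of $\Omega$.

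Your Hall--Higman/Thompson-type mechanism does not achieve this. The hypothesis on $[L,A],[P,A]$ only constrains how $A$ permutes the bottom layer. The upper non-$p$-soluble sections sit inside $G/K_p(G)\le \Sym(\Omega)$, where it gives no information. Nor does ``$A$ moves few factors'' bound anything by elementary means. Each $[g,\alpha]$ moves at most $2m$ points, but groups generated by permutations of bounded support (alternating groups generated by $3$-cycles) are unbounded, so coprimality must enter essentially. Your closing sentence conflates two quantities: Step 2 bounds the permutations induced by $A$, not those induced by $[G,A]$.

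The missing ingredient is Lemma \ref{2.5} (Acciarri--Guralnick--Shumyatsky). It says that if $H\langle a\rangle$ acts faithfully on a set, $(|H|,|a|)=1$, $H=[H,a]$, and $a$ moves only $m$ points, then $|H\langle a\rangle|$ is $m$-bounded. Apply it to the image of $[G,\alpha]\langle\alpha\rangle$ in $\Sym(\Omega)$ for each $\alpha\in A$. Then write $[G,A]$ modulo $K_p(G)$ as the product of the normal subgroups $[G,\alpha]$, over the at most $m!$ elements of the group induced by $A$ on $\Omega$. This bounds $|[G,A]K_p(G)/K_p(G)|$ uniformly, which is exactly the bounded-index statement your reduction in Step 1 asks for.
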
 

Our next result is related to the following theorem obtained in \cite{DGMS1}.\medskip

\noindent{\bf Theorem B.} {\it Let $G$ be a profinite group containing a Sylow $p$-subgroup $P$ such that $\pr(P,P^x)>0$ for all $x\in G$. Then $O_{p,p'}(G)$ is open in $G$.}
\medskip

We address the question on the structure of a profinite group $G$ admitting a finite group of coprime automorphisms $A$ and containing an $A$-invariant Sylow $p$-subgroup $P$ such that $\pr([P,A],[P,A]^x)>0$ for every $x\in G$. It turns out that even if $G=[G,A]$, the group $G$ need not be virtually pro-$p$-soluble and in particular $O_{p,p'}(G)$ need not be open in $G$ (see Example \ref{remark}). Instead, the conclusion about $G$ is similar to that in Theorem \ref{main44}.

\begin{theorem}\label{main5}
 Let $G$ be a profinite group admitting a finite group of coprime automorphisms $A$. Assume that $P$ is an $A$-invariant Sylow $p$-subgroup of $G$ such that $\pr([P,A],[P,A]^x)>0$ for every $x\in G$. 
Then $[G,A]$ has an open normal subgroup of non-$p$-soluble length at most $1$. 
 \end{theorem}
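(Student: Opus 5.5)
Our plan follows the standard route for ``positive commuting probability'' results on profinite groups: first extract a subgroup of bounded index with a large centraliser, then reduce to finite quotients and use the structure of finite simple groups. The first step is the Neumann-type lemma for profinite groups (used in \cite{DGMS1}). If $\pr(H,K)>0$ for closed subgroups $H,K$, then there are an open subgroup $H_0\le H$, an open subgroup $K_0\le K$ and an element $b$ such that $[H_0,K_0^b]=1$, or at least $[H_0,K_0]$ is finite. By a Baire category argument, the hypothesis $\pr([P,A],[P,A]^x)>0$ for every $x\in G$ then gives uniformity. Namely, there are an open normal subgroup $N$ of $G$ (which we may take $A$-invariant) and an integer $m$ such that, for every $x\in G$, the subgroups $[P,A]\cap N$ and $([P,A]\cap N)^x$ have commutator subgroup of order at most $m$ modulo a suitable normal subgroup. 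This is the analogue of the first step in the proof of Theorem B. Passing to $[G,A]\cap N$, it therefore suffices to prove the following finite statement with bounds depending only on $m$ and the index.

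\emph{Finite statement.} Let $G$ be finite, $A$ a coprime group of automorphisms, and $P$ an $A$-invariant Sylow $p$-subgroup such that $\langle [P,A]^x,[P,A]\rangle$ is ``almost abelian'' for all $x$. Then $\lambda_p([G,A])\le 1$ modulo a bounded-index normal subgroup.

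Since $[G,A]$ is generated by the $A$-conjugates of $[P,A]$ together with the analogous subgroups for other primes, we take the $p$-soluble radical $R$ and consider $G/R$. Its socle is a product $S_1\times\dots\times S_k$ of nonabelian simple groups of order divisible by $p$. Here $A$ permutes the factors, and $[P,A]$ projects into each $S_i$. Now we distinguish two cases. If $[P,A]$ acts nontrivially on a simple factor $S_i$, or on a single orbit of them, then the condition that $[P,A]$ almost commutes with all its conjugates forces the projection of $[P,A]$ into $S_i$ to have bounded size. Indeed, in a simple group a $p$-subgroup commuting with all its conjugates is trivial, by Glauberman's $Z^*$-type results or the Baer--Suzuki argument used in \cite{DGMS1}. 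On the other hand, a coprime automorphism group $A$ acting on a product of simple groups with $[S,A]$ ``small'' must centralise most factors, by coprime action on simple groups via field automorphisms. So only boundedly many factors are moved by $A$, and these contribute a bounded-index piece.

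Next, we show that $[G,A]$ centralises modulo $R$ the factors which $A$ fixes elementwise. This uses the three-subgroup lemma and the fact that $[G,A]$ is generated by $[P,A]$-type subgroups which act trivially on such factors. Consequently, $[G,A]R/R$ has a bounded-index subgroup whose intersection with the socle is a product of simple groups. The quotient of this intersection by the socle is $p$-soluble, by Schreier's conjecture together with the coprimality of $A$. Pulling back, we get a normal series $R\le T\le [G,A]$ in which $T/R$ is a direct product of simple groups and $[G,A]/T$ is $p$-soluble up to bounded index. Hence $\lambda_p\le 1$ on a bounded-index normal subgroup. Finally, passing to the inverse limit, using the bounds uniform in the finite quotients, yields an open normal subgroup of $[G,A]$ with $\lambda_p\le 1$.

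The main obstacle is the second case: showing that the projections of $[P,A]$ to the simple sections have bounded order, and that $[G,A]$ cannot produce a second non-$p$-soluble layer above the first one. For this we would first try to mimic the proof of Theorem B. There, elements of $P$ almost commuting with all conjugates lie in $O_{p,p'}$ modulo a bounded part. Here we apply that argument to $[P,A]$ inside the group $\langle [P,A]^G\rangle$, which is normal and contains the relevant part of $[G,A]$, and then use Example \ref{remark} as a guide for why one simple layer survives.
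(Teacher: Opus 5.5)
Your outline has a genuine gap exactly where you locate ``the main obstacle'', and that step is the heart of the proof. The paper first proves Proposition \ref{main4}: $[P,A]$ is finite modulo $O_p(G)$. That proof has several ingredients, none of which your outline supplies:
\begin{itemize}
\item Neumann's theorem together with Lemma \ref{open}, to reduce to $[P,\alpha]$ abelian;
\item Lemma \ref{KpP} and Corollary \ref{Kp}, to see that the normal closure of $[P,\alpha]$ has $\lambda_p\le 1$;
\item a Baire category argument inside the layer $M=\prod S_i$, followed by Theorem 1.3 of \cite{DGMS2};
\item Hall--Higman/Hartley and Theorem B in the pro-$p$-soluble part.
\end{itemize}
Your substitute is also aimed at the wrong target. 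Bounding the projection of $[P,A]$ into each single factor $S_i$ says nothing about how many factors $[P,A]$ meets, and what is needed is finiteness modulo $R_p(G)$. The fact you cite, that in a simple group a $p$-subgroup commuting exactly with all its conjugates is trivial, is immediate (its normal closure is abelian). The hard part is the ``almost commuting'' version across infinitely many factors. Your uniformity claim is also unjustified. The Baire argument only gives a uniform bound for $x$ in a coset $aM_0$ of an open subgroup, not for all $x\in G$. The paper recovers all $x\in M$ only by splitting off a finite direct factor inside a Cartesian product of simple groups.

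The arguments you give for the remaining steps fail as stated. It is false that $A$ must centralise most simple factors. In Example \ref{remark}, $\alpha$ acts as a nontrivial field automorphism on each of infinitely many factors while $[P,\alpha]=1$; the hypothesis controls $[P,A]$, never $[S_i,A]$. Nor can you claim that $[G,A]$ is generated by ``$[P,A]$-type subgroups'' acting trivially on such factors, since nothing is assumed about $[Q,A]$ for primes $q\ne p$.

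The paper's mechanism is different:
\begin{itemize}
\item Take an open $A$-invariant normal $H$ with $H\cap[P,A]\le O_p(G)$.
\item In a finite quotient with $R_p(\bar H)=1$, $A$ centralises an $A$-invariant Sylow $p$-subgroup $\tilde P$ of $F^*(\bar H)$. This forces $A$ to normalise each factor, though not to centralise it.
\item By the Frattini argument, $\bar H=F^*(\bar H)N_{\bar H}(\tilde P)$. By Lemma \ref{coprime}(v), $[N_{\bar H}(\tilde P),A]$ centralises the normal subgroup $\tilde P$, hence normalises every simple factor.
\item Thus $[\bar H,A]\le K_p(\bar H)$ and $\lambda_p([H,A])\le 1$.
\item Finally, assuming $G=[G,A]$, the image of $H$ is central in $G/[H,A]^G$, and Schur's theorem produces an open normal subgroup with $\lambda_p\le 1$.
\end{itemize}
Your reduction to $[G,A]\cap N$ with ``bounded index'' does not provide this last step. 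A conclusion about $[\,G_1,A]$ for an open subgroup $G_1$ is not a conclusion about $G_1$ itself or about an open subgroup of $[G,A]$.
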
 

The proofs of the above results depend on the classification of finite simple groups. Throughout, by a finite simple group we mean a finite nonabelian simple group.

\section{Preliminaries} 

We first present some general results about commuting probabilities in compact groups. So, unless otherwise specified, $G$ will be a compact Hausdorff topological group and a subgroup of $G$ will always be a closed subgroup. Note that any finite index subgroup of $G$ is open in $G$. We recall  that a profinite group  is a  compact  Hausdorff topological space, where a basis of  neighbourhoods of the identity consists of normal subgroups of finite index. 
 
The Borel $\sigma$-algebra $\mathcal M$ of a compact group $G$ is the one generated by all closed subsets of $G$. We say that a measure $\mu$ on $(G,\mathcal M)$ is a (left) Haar measure provided $\mu$ is both inner and outer regular, $\mu(K) < \infty$ and $\mu(xE) = \mu(E)$ for all compact subsets $K$ and measurable subsets $E$ of $G$ (see \cite[Chapter 4]{HR} or \cite[Chapter II]{Nach}). Recall that there is a unique Haar measure $\mu$ on $(G,\mathcal M)$ such that $\mu(G) = 1$. 

Moreover, if $H$ is a subgroup of $G$, then either $\mu(H) =0$ or $\mu(H) >0$, and in the latter case $H$ is open in $G$ and $\mu(H)=|G:H|^{-1}.$

If $H$ and $K$ are  subgroups of $G$ the
set 
\[C = \{(x, y) \in H \times K \mid xy = yx\}\]  is closed in $H \times K$ since it is the  preimage of $1$ under the continuous map $f : H\times K \rightarrow G$ given by $f(x, y) = [x, y]$.
Denoting the normalized Haar measures of $H$ and $K$ by $\mu_H$ and $\mu_K$, respectively, the probability that a random element from $H$ commutes with a random element from $K$ is defined as 
 \[\pr(H,K) = (\mu_H \times \mu_K)(C).\]

%Note that 
%\[\pr(H,K) = \int_H \mu_K (C_K(x)) d\mu_H(x)  = \int_K \mu_H (C_H(y)) d\mu_K(y).\]
%
%
%For every $x \in G$, the centralizer $C_G(x)$ equals $f_x^{-1} (1)$, where $f_x$ is the continuous function $f_x(y) = [x, y]$. It follows that $C_G(x)$ is closed and measurable. 
%We define 
%\[ \pr (x, K)= \mu_K \left(C_K(x)\right), \]
%and note that $\pr (x, K)= |K:C_K(x)|^{-1}$ whenever $C_K(x)$ is open in $K$. 
	
The following lemmas can be found for instance in \cite{MZ}, (see Lemma 5 and Lemma 6) and they show that the commuting probability of two subgroups does not decrease when passing to homomorphic images or considering proper subgroups.

\begin{lemma}\label{sub} 
Let $H,K$ be subgroups of a compact group $G$. Then for any subgroup $H_0 \le H$ we have 
\[\pr(H_0,K)\geq \pr(H,K).\]
\end{lemma}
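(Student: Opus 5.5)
The plan is to write $\pr(H_0,K)$ as an integral over $K$ and compare it fibrewise with $\pr(H,K)$. For $y\in K$ the centralizer $C_H(y)=H\cap C_G(y)$ is a closed subgroup of $H$, being the preimage of $1$ under $x\mapsto [x,y]$. By Fubini's theorem applied to the closed set $C$,
\[\pr(H,K)=\int_K \mu_H\bigl(C_H(y)\bigr)\,d\mu_K(y),\qquad \pr(H_0,K)=\int_K \mu_{H_0}\bigl(C_{H_0}(y)\bigr)\,d\mu_K(y).\]
It therefore suffices to prove, for each fixed $y\in K$, the pointwise inequality
\[\mu_{H_0}\bigl(C_{H_0}(y)\bigr)\ \ge\ \mu_H\bigl(C_H(y)\bigr),\]
and then integrate over $K$.

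To prove the pointwise inequality, put $X=C_H(y)$, so that $C_{H_0}(y)=H_0\cap X$. We use the dichotomy recalled above: a closed subgroup has either measure zero or positive measure, and in the latter case it is open with measure equal to the reciprocal of its index.
\begin{enumerate}[(i)]
\item If $\mu_H(X)=0$, the inequality is trivial.
\item Otherwise $X$ is open in $H$ and $\mu_H(X)=|H:X|^{-1}$. Then $H_0\cap X$ is open in $H_0$, and the map $h(H_0\cap X)\mapsto hX$ from cosets of $H_0\cap X$ in $H_0$ to cosets of $X$ in $H$ is injective. This gives $|H_0:H_0\cap X|\le |H:X|$, hence $\mu_{H_0}(H_0\cap X)=|H_0:H_0\cap X|^{-1}\ge |H:X|^{-1}$.
\end{enumerate}
Integrating this inequality over $y\in K$ gives $\pr(H_0,K)\ge\pr(H,K)$.

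The main obstacle is justifying the measure-theoretic steps. Fubini requires measurability of $C$ in $H\times K$, which holds since $C$ is closed. Since compact groups need not be second countable, we should note that for Radon product measures on compact spaces, Fubini still holds for closed (indeed Baire or compact) sets. We also need measurability of $y\mapsto \mu_H(C_H(y))$, which comes from the same Fubini statement. If one prefers to avoid these subtleties in the profinite case, one can instead reduce to finite quotients. There $\pr$ is a limit of the finite commuting probabilities of the images, and the finite statement is exactly the index computation in (ii).
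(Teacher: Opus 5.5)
The paper gives no proof of this lemma and simply cites Lemma 5 of \cite{MZ}. Your argument is correct and is essentially the standard one behind that citation: disintegrate $\pr$ over $K$ as $\int_K \mu_H(C_H(y))\,d\mu_K(y)$, then compare pointwise via $|H_0:C_{H_0}(y)|\le |H:C_H(y)|$. Your added care about Fubini for Radon products on non-metrizable compact groups is sound, as is the alternative reduction to finite quotients through Lemma~\ref{limit}.
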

\begin{lemma}\label{quot} 
 Let $G$ be a compact group and let $N$ be a normal subgroup of  $G$.  For any subgroups $H,K\leq G$ we have 
\[\pr(H,K)\leq \pr(HN/N,KN/N)\pr(N\cap H,N\cap K).\]
\end{lemma}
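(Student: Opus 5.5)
We prove Lemma \ref{quot} by disintegrating the Haar measure on $H\times K$ along cosets of $H\cap N$ and $K\cap N$, bounding the commuting set fibrewise and then using Fubini. Write $H_0=H\cap N$, $K_0=K\cap N$, $\bar H=HN/N\cong H/H_0$ and $\bar K=KN/N\cong K/K_0$. Let $C\subseteq H\times K$ be the commuting set and let $\bar C\subseteq \bar H\times\bar K$ be the commuting set of the quotient. If $(x,y)\in C$, then $(xN,yN)\in\bar C$. Hence $C$ is contained in the preimage $D$ of $\bar C$ under the projection $\pi:H\times K\to \bar H\times\bar K$. The Haar measure $\mu_H\times\mu_K$ pushes forward to $\mu_{\bar H}\times\mu_{\bar K}$ under $\pi$, by uniqueness of normalized Haar measure. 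The task is therefore to show that inside each fibre $xH_0\times yK_0$ with $(xN,yN)\in\bar C$, the proportion of commuting pairs is at most $\pr(H_0,K_0)$.

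The key step is the fibre estimate. Fix $x\in H$ and $y\in K$. The pairs in $xH_0\times yK_0$ that commute have the form $(xa,yb)$ with $a\in H_0$, $b\in K_0$ and $[xa,yb]=1$. I plan to use the standard "coset" argument familiar from finite groups. Fix $a$. The set of $b\in K_0$ with $yb\in C_K(xa)$ is either empty or a coset of $C_{K_0}(xa)$. Likewise, for fixed $b$, the set of $a\in H_0$ with $xa\in C_H(yb)$ is either empty or a coset of $C_{H_0}(yb)$. The direct fibrewise bound is in terms of $C_{K_0}(xa)$, not $C_{K_0}(a)$, so I will not obtain $\pr(H_0,K_0)$ by this route alone.

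Instead I will use the character-free argument of \cite{MZ}, which works with conjugation orbits. For fixed $u=xa$, the set of $b\in K_0$ with $[u,yb]=1$ is a coset of $C_{K_0}(u)$, so it has measure $\mu_{K_0}(C_{K_0}(u))$ whenever it is nonempty. Integrating over $a$ gives the fibre bound
\[
\int_{H_0}\mu_{K_0}\bigl(C_{K_0}(xa)\bigr)\,d\mu_{H_0}(a).
\]
The next step is to show that this quantity is at most $\pr(H_0,K_0)=\int_{H_0}\mu_{K_0}(C_{K_0}(a))\,d\mu_{H_0}(a)$. The intended route is to view both sides as integrals over $K_0$ by Fubini. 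The right side becomes $\int_{K_0}\mu_{H_0}(C_{H_0}(b))\,d\mu_{K_0}(b)$. The left side becomes $\int_{K_0}\mu_{H_0}\{a\in H_0 : [xa,b]=1\}\,d\mu_{K_0}(b)$. For each $b$, the set $\{a\in H_0:[xa,b]=1\}$ is $H_0\cap x^{-1}C_G(b)$, which is empty or a coset of $C_{H_0}(b)$. Its measure is therefore at most $\mu_{H_0}(C_{H_0}(b))$, and integrating over $b$ yields the fibre inequality.

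Finally, integrating the fibre bound over $(xN,yN)\in \bar C$ gives
\[
\pr(H,K)\le (\mu_{\bar H}\times\mu_{\bar K})(\bar C)\cdot\pr(H_0,K_0)=\pr(\bar H,\bar K)\,\pr(H_0,K_0).
\]
I expect the main obstacle to be the measure-theoretic justification rather than the algebra. One needs the disintegration of $\mu_H$ over $H/H_0$ into translates of $\mu_{H_0}$, which follows from the Weil formula for compact groups, together with the measurability of the relevant slices, which holds because they are closed. Since these are cited from \cite{MZ}, I would keep those points brief.
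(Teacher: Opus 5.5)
Your proof is correct. The paper gives no argument of its own for this lemma; it only refers to Lemmas 5 and 6 of \cite{MZ}. What you do is the standard finite-group counting proof transported to Haar measure:
\begin{itemize}
\item The first coset observation bounds the measure of commuting pairs in the fibre $xH_0\times yK_0$ by $(\mu_{H_0}\times\mu_{K_0})\{(a,b):[xa,b]=1\}$.
\item After Fubini, the second observation, that $\{a\in H_0:[xa,b]=1\}$ is empty or a coset of $C_{H_0}(b)$, bounds this by $\pr(H_0,K_0)$.
\item Fibres lying over the complement of $\bar C$ contain no commuting pairs.
\item Weil's formula for the closed normal subgroup $H_0\times K_0$ of $H\times K$ assembles the fibrewise bounds into $\pr(H,K)\le\pr(\bar H,\bar K)\pr(H_0,K_0)$.
\end{itemize}

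Two small corrections to the write-up. First, the step you actually carry out has nothing to do with conjugation orbits, so that description of \cite{MZ} should be dropped. Second, you do not need to borrow the measurability required in Weil's formula from \cite{MZ}. Since $C$ is closed and $H\times K$ is compact, the fibre function $(x,y)\mapsto(\mu_{H_0}\times\mu_{K_0})\{(a,b):[xa,yb]=1\}$ is upper semicontinuous and constant on cosets of $H_0\times K_0$, and that gives the measurability directly.
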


In particular, the previous lemmas show that if a profinite group $G$ has a Sylow $p$-subgroup $P$ and a Sylow $2$-subgroup $L$ such that $\pr(L,P)\ge \eta$, then every normal  section of $G$ has the same property. We will often use this fact without mentioning. 

The following is Lemma 7.1 in \cite{DGMS1}. 

\begin{lemma}\label{limit} 
Let $G$ be a profinite group and $H, K \le G$.  Then 
\[ \pr(H,K) = \inf_{N \lhd_o G} \pr \left(\frac{HN}{N},\frac{KN}{N}\right).\]
\end{lemma}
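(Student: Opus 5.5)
We must prove Lemma \ref{limit}: for a profinite group $G$ and closed subgroups $H,K$,
\[\pr(H,K)=\inf_{N\lhd_o G}\pr(HN/N,KN/N).\]

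\textbf{Inequality $\le$.} For each open normal $N$, apply Lemma \ref{quot}:
\[\pr(H,K)\le \pr(HN/N,KN/N)\,\pr(N\cap H,N\cap K)\le \pr(HN/N,KN/N),\]
since a probability is at most $1$. Taking the infimum over $N$ gives the first inequality.

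\textbf{Inequality $\ge$.} We describe the commuting set by finite-level conditions. For $N\lhd_o G$ put
\[C_N=\{(x,y)\in H\times K : [x,y]\in N\}.\]
Each $C_N$ is closed and open in $H\times K$, being the preimage of the open subgroup $N$ under the continuous commutator map. Since $\bigcap_N N=1$ in a profinite group, $C=\bigcap_N C_N$. The family $\{C_N\}$ is directed downward, because $C_{N_1\cap N_2}\subseteq C_{N_1}\cap C_{N_2}$.

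Regularity of the Haar measure $\mu_H\times\mu_K$ (it is a Radon measure on the compact space $H\times K$) then gives
\[(\mu_H\times\mu_K)(C)=\inf_N (\mu_H\times\mu_K)(C_N).\]
To verify this, take an open $U\supseteq C$ with measure within $\varepsilon$ of $\mu(C)$. By compactness, the decreasing closed sets $C_N\setminus U$ have empty intersection, so some $C_N\subseteq U$.

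\textbf{Identifying $\mu(C_N)$ with the finite commuting probability.} Consider the map $H\times K\to HN/N\times KN/N$. It is a continuous surjective homomorphism of compact groups, so it pushes $\mu_H\times\mu_K$ forward to the normalized Haar (uniform counting) measure on the finite group $HN/N\times KN/N$. The set $C_N$ is precisely the preimage of the set of commuting pairs in $HN/N\times KN/N$. Therefore
\[(\mu_H\times\mu_K)(C_N)=\pr(HN/N,KN/N),\]
and the equality in Lemma \ref{limit} follows.

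\textbf{Main obstacle.} The only nonformal step is the measure-theoretic passage to the intersection, i.e.\ the regularity and compactness argument above. The pushforward statement also needs a justification: the pushforward is a translation-invariant probability measure on the quotient, so by uniqueness of Haar measure it is the normalized Haar measure there.
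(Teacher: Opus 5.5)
The paper gives no proof of Lemma \ref{limit}; it imports it as Lemma 7.1 of \cite{DGMS1}. So there is no argument in the text to compare yours with. Your proof is correct and self-contained.

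\textbf{The $\ge$ direction.} This is the substantive direction, and you handle it the right way. The family $\{C_N\}$ is indexed by all open normal subgroups. If $G$ is not countably based, it is an uncountable downward-directed family. Countable additivity alone would then not give $\mu\bigl(\bigcap_N C_N\bigr)=\inf_N \mu(C_N)$. Outer regularity plus compactness (the $\tau$-additivity of a Radon measure) is exactly what is needed. Your finite-intersection argument, combined with directedness, closes it.

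\textbf{Which measure is meant.} This step uses that $\mu_H\times\mu_K$ means the Radon product, i.e. the Haar measure of the compact group $H\times K$. That is also the reading under which $\pr(H,K)$ is defined on the closed set $C$ when $H,K$ are not metrizable. It matches the paper's convention that Haar measures are inner and outer regular.

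\textbf{The identification $\mu(C_N)=\pr(HN/N,KN/N)$.} This can be made completely concrete. $C_N$ is a finite union of clopen rectangles $x(H\cap N)\times y(K\cap N)$, each of measure $|H:H\cap N|^{-1}\,|K:K\cap N|^{-1}$. Counting them gives the identification without appealing to uniqueness of Haar measure.

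\textbf{The $\le$ direction.} This does not need Lemma \ref{quot}. Once $\mu(C_N)=\pr(HN/N,KN/N)$ is established, the inclusion $C\subseteq C_N$ gives $\pr(H,K)\le \pr(HN/N,KN/N)$ directly. Using this makes your argument independent of the cited compact-group lemmas.
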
\

\begin{lemma}\label{prod} 
Let $G$ be a profinite group and $H, K \le G$. Assume that $H=H_1H_2$ is the product of two subgroups. Then
\[\pr(H,K)\ge\pr(H_1,K)\pr(H_2,K)\]
\end{lemma}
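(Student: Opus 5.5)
The plan is to reduce to finite quotients via Lemma \ref{limit} and then prove the inequality there by a direct counting argument based on the product formula for $H=H_1H_2$.

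\emph{Finite case.} Suppose first that $G$ is finite. Then
\[\pr(H,K)=\frac{1}{|H||K|}\,\bigl|\{(h,k)\in H\times K : hk=kh\}\bigr| = \frac{1}{|K|}\sum_{k\in K}\frac{|C_H(k)|}{|H|}.\]
Fix $k\in K$ and set $C_i=C_{H_i}(k)=C_H(k)\cap H_i$. Every element of $C_1C_2$ centralizes $k$, so $C_1C_2\subseteq C_H(k)$. Using $|H|=|H_1||H_2|/|H_1\cap H_2|$ and $|C_1C_2|=|C_1||C_2|/|C_1\cap C_2|$, we get
\[\frac{|C_H(k)|}{|H|}\ \ge\ \frac{|C_1||C_2|}{|C_1\cap C_2|}\cdot\frac{|H_1\cap H_2|}{|H_1||H_2|}\ \ge\ \frac{|C_1|}{|H_1|}\cdot\frac{|C_2|}{|H_2|},\]
since $C_1\cap C_2\le H_1\cap H_2$. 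Write $f_i(k)=|C_{H_i}(k)|/|H_i|$. Averaging over $k\in K$ gives
\[\pr(H,K)\ \ge\ \mathbb{E}_k[f_1f_2].\]
This yields only $\mathbb{E}[f_1f_2]$, not $\mathbb{E}[f_1]\,\mathbb{E}[f_2]$, and this is the main obstacle. I see two routes.

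\emph{First route: positive correlation.} Each $f_i$ is constant on $K$-conjugacy classes, but I see no general reason for $f_1$ and $f_2$ to be positively correlated, so I would not try to prove $\mathbb{E}[f_1f_2]\ge \mathbb{E}[f_1]\mathbb{E}[f_2]$ directly.

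\emph{Second route: count over $H$.} Here
\[\pr(H,K)=\frac{1}{|H|}\sum_{h\in H}\frac{|C_K(h)|}{|K|}.\]
Each $h\in H$ arises from exactly $|H_1\cap H_2|$ pairs $(h_1,h_2)\in H_1\times H_2$ with $h=h_1h_2$. Since $C_K(h_1)\cap C_K(h_2)\subseteq C_K(h_1h_2)$, this gives
\[\pr(H,K)\ \ge\ \mathbb{E}_{h_1,h_2}\!\left[\frac{|C_K(h_1)\cap C_K(h_2)|}{|K|}\right],\]
with $h_i$ uniform in $H_i$. This is again $\mathbb{E}_k[f_1f_2]$, so both routes stall at the same point.

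\emph{Fallback.} To get the product bound I would use $|C_K(h_1)\cap C_K(h_2)|\ge |C_K(h_1)||C_K(h_2)|/|K|$, which holds because $|AB|\le|K|$ for subgroups $A,B\le K$. Averaging over independent $h_1,h_2$ then gives
\[\pr(H,K)\ \ge\ \mathbb{E}\!\left[\frac{|C_K(h_1)|}{|K|}\right]\mathbb{E}\!\left[\frac{|C_K(h_2)|}{|K|}\right]=\pr(H_1,K)\,\pr(H_2,K),\]
by independence. The step to double-check is the inequality $|A\cap B|\ge|A||B|/|K|$, which follows from $|AB|=|A||B|/|A\cap B|\le|K|$ and which I believe is correct.

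\emph{Profinite case.} For general profinite $G$, apply the finite case in $G/N$ for each open normal subgroup $N$. Since $HN/N=(H_1N/N)(H_2N/N)$,
\[\pr\!\left(\frac{HN}{N},\frac{KN}{N}\right)\ \ge\ \pr\!\left(\frac{H_1N}{N},\frac{KN}{N}\right)\pr\!\left(\frac{H_2N}{N},\frac{KN}{N}\right)\ \ge\ \pr(H_1,K)\,\pr(H_2,K),\]
where the last inequality uses Lemma \ref{limit}, since each factor is at least its infimum over $N$. Taking the infimum over $N$ on the left and applying Lemma \ref{limit} once more gives $\pr(H,K)\ge\pr(H_1,K)\pr(H_2,K)$.
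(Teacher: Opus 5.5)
Your fallback argument is the paper's proof. You write $\pr(H,K)$ as an average of $|C_K(h_1h_2)|/|K|$ over $H_1\times H_2$, bound $|C_K(h_1h_2)|\ge|C_K(h_1)\cap C_K(h_2)|\ge|C_K(h_1)||C_K(h_2)|/|K|$, factor the double sum, and pass to profinite $G$ via Lemma \ref{limit}; your use of the infimum to get $\pr(H_iN/N,KN/N)\ge\pr(H_i,K)$ does the job of the paper's appeal to Lemma \ref{quot}. The two earlier ``routes'' are unnecessary detours, and in fact the fallback proves the positive correlation $\mathbb{E}_k[f_1f_2]\ge\mathbb{E}_k[f_1]\,\mathbb{E}_k[f_2]$ that you doubted.
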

\begin{proof} First assume that $G$ is finite. We will repeatedly use tha fact that, if $A$ and $B$ are subgroup of $G$ then every element of the product $AB$ % can be writte in exactly $|H_1\cap H_2|$ ways as a product $h=h_1h_2$, with $h_1\in H_1,h_2\in H_2$. In particular, $|H|= (|H_1|\,|H_2|)/|H_1\cap H_2|$. 
can be written in exactly $|A\cap B|$ ways as a product $ab$, with $a\in A$, $b\in B$. In particular, $|AB|= (|A|\,|B|)/|A\cap B|$. 
We have that
\begin{eqnarray*}
\pr(H,K)
&=&
\frac{1}{|H||K|}
\sum_{h\in H} |C_K(h)|
 = %&=&
\frac{|H_1\cap H_2|}{|H_1||H_2||K|}\sum_{h_1h_2\in H} |C_K(h_1h_2)|\\
&=&
\frac{1}{|H_1||H_2||K|}\sum_{h_1\in H_1}\sum_{h_2\in H_2}
|C_K(h_1h_2)|.
\end{eqnarray*}
%\geq \frac{1}{|H_1||H_2||K|}\sum_{h_1\in H_1}\sum_{h_2\in H_2}|C_K(h_1)\cap C_K(h_2)|=\frac{1}{|H_1||H_2||K|}\sum_{h_1\in H_1}\sum_{h_2\in H_2}
%\frac{|C_K(h_1)|\,|C_K(h_2)|}{|C_K(h_1)C_K(h_2)|}\ge \frac{1}{|H_1||H_2||K|}\sum_{h_1\in H_1}\sum_{h_2\in H_2}\frac{|C_K(h_1)|\,|C_K(h_2)|}{|K|}
 Note that 
 \begin{eqnarray*}
|C_K(h_1h_2)| &\ge& |C_K(h_1)\cap C_K(h_2)|=\frac{|C_K(h_1)|\,|C_K(h_2)|}{|C_K(h_1)C_K(h_2)|} \\
&\ge& \frac{|C_K(h_1)|\,|C_K(h_2)|}{|K|}.
\end{eqnarray*}

Therefore 
 \begin{eqnarray*}
\pr(H,K) &\ge &\sum_{h_1\in H_1}\sum_{h_2\in H_2}\frac{|C_K(h_1)|\,|C_K(h_2)|}{|H_1||H_2||K|^2}\\
 &=& \left(\sum_{h_1\in H_1}\frac{|C_K(h_1)|}{|H_1||K|}\right)\,\left(\sum_{h_2\in H_2}\frac{|C_K(h_2)|}{|H_2||K|}\right) \\
 &=& \pr(H_1,K)\pr(H_2,K).
\end{eqnarray*}

If $G$ is a profinite group, the result follows from Lemma \ref{limit}, taking into account the fact that, by Lemma \ref{quot}, the commuting 
probability of two subgroups does not decrease when considering homomorphic images of $G$.
\end{proof}

In what follows we use the following well-known facts (see for example \cite{asch}), often without mention. 
\begin{lemma}\label{coprime}
Let a group $A$ act coprimely on a finite group $G$. The following  holds:
\begin{itemize}
    \item[(i)] $G = [G, A]C_G(A)$ and $[G,A]=[G,A,A]$;\
		\item[(ii)] if $G$ is abelian, then $G = [G,A]\times C_G(A)$;
    \item[(iii)] if $N$ is any $A$-invariant normal subgroup of $G$, we have $C_{G/N}(A) = C_G(A)N/N$;
		\item[(iv)] the group $G$ possesses an $A$-invariant Sylow $p$-subgroup for each prime $p \in \pi(|G|)$,  any two $A$-invariant Sylow $p$-subgroups are conjugate by an element of $C_G(A)$, and any  $A$-invariant  $p$-subgroup of $G$ is contained in an $A$-invariant Sylow $p$-subgroup;
		\item[(v)] if $N$ is any normal subgroup of $G$ such that $[N,A]=1$, then $[G,A]$ centralizes $N$.
    \end{itemize}
\end{lemma}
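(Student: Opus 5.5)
All five parts are standard consequences of coprime action. I would work inside the semidirect product $GA$ and use the Schur--Zassenhaus theorem: since $(|G|,|A|)=1$, complements to a normal Hall subgroup exist and are conjugate. Conjugacy normally needs one of $G,A$ to be soluble, which the Feit--Thompson theorem supplies because one of the two orders is odd. The order I would follow is (iii), then (i), (ii), (v), (iv), since (iii) is the engine for the rest.

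\emph{Part (iii).} Let $gN$ be a coset fixed by $A$. Then $N$ acts regularly on $X=gN$ by right multiplication, and $A$ acts on $X$ compatibly via $(xn)^a=x^an^a$. Hence $NA$ acts transitively on $X$. The stabilizer $S$ of a point has index $|X|=|N|$ in $NA$, so $S$ is a complement to $N$ in $NA$. By Schur--Zassenhaus, $A^n=S$ for some $n\in N$, so $A$ fixes a point of $X$. That point is an element of $C_G(A)$ lying in $gN$, which proves $C_{G/N}(A)\le C_G(A)N/N$; the reverse inclusion is trivial.

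\emph{Parts (i), (ii), (v).} For (i), apply (iii) with $N=[G,A]$. This subgroup is normal and $A$-invariant, and $A$ acts trivially on $G/N$, so $G=[G,A]C_G(A)$. For $[G,A]=[G,A,A]$, set $M=[G,A,A]$ and check that $M$ is normal in $G$: it is normalized by $[G,A]$, and it is normalized by $C_G(A)$ because $[x,a]^c=[x^c,a]$ for $c\in C_G(A)$. Then pass to $G/M$, where $A$ centralizes $[G,A]M/M$. Writing $g=xc$ with $x\in[G,A]$ and $c\in C_G(A)$, and using coprimality to show that $[x,a]$ is trivial modulo $M$, gives $[G,A]\le M$.

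For (ii), with $G$ abelian, use the norm map $\theta(g)=\prod_{a\in A}g^a$. It kills every $[g,b]$, hence kills $[G,A]$. On $C_G(A)$ it is $c\mapsto c^{|A|}$, which is injective by coprimality. So $[G,A]\cap C_G(A)=1$, and together with (i) this gives the direct product.

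For (v), use the three subgroups lemma in $GA$. First, $[N,G,A]\le[N,A]=1$ because $[N,G]\le N$. Second, $[A,N,G]=1$. Therefore $[G,A,N]=1$.

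\emph{Part (iv).} For existence, apply the Frattini argument in $GA$ to a Sylow subgroup $P$ of $G$, giving $GA=G\,N_{GA}(P)$. Then $N_{GA}(P)$ contains a complement to $N_G(P)$, which is a complement to $G$ in $GA$. By Schur--Zassenhaus it is conjugate to $A$, so some conjugate of $P$ is $A$-invariant.

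For conjugacy, let $P$ and $P^g$ both be $A$-invariant. Then the coset $N_G(P)g$ is $A$-invariant, since $P^{g^a}=P^g$. The argument of (iii), applied with $N_G(P)$ acting regularly on this coset, yields an element $c\in C_G(A)\cap N_G(P)g$, and $P^c=P^g$.

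For containment, let $Q$ be an $A$-invariant $p$-subgroup and argue by induction on $|G:Q|$. Work in $N_G(Q)$ and take an $A$-invariant Sylow $p$-subgroup $R$ of it, with $Q\le R$; this uses the existence part together with the conjugacy part inside $N_G(Q)Q/Q$. If $R>Q$, apply induction to $R$. If $R=Q$, then $Q$ is Sylow in its own normalizer, hence Sylow in $G$.

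The main obstacle is the Schur--Zassenhaus conjugacy step used in (iii) and (iv). All the other parts reduce to it or to elementary manipulations, and it is the only point where solubility of one factor is needed, via Feit--Thompson.
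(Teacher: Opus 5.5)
Your proof is correct. The paper gives no proof of this lemma at all: it lists these as well-known facts and refers to Aschbacher's \emph{Finite group theory}. Your argument is essentially the standard textbook derivation found there, so what you add is self-containedness rather than a different route. The ingredients are Schur--Zassenhaus, with Feit--Thompson supplying the solubility needed for conjugacy of complements, and the fixed-point argument of (iii) for a transitive action of $NA$ on an $A$-invariant set.

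Three small remarks.
\begin{enumerate}
\item In (i), once $M=[G,A,A]$ is known to be normal and $G=[G,A]C_G(A)$, the identity $[xc,a]=[x,a]^c$ finishes the proof, since $[x,a]\in M$ holds by the very definition of $M$. No further appeal to coprimality is needed at that step.
\item In the conjugacy part of (iv), $N_G(P)$ acts regularly on the right coset $N_G(P)g$ by left multiplication, not by right multiplication as in (iii). The argument of (iii) still carries over verbatim, because $N_G(P)$ is $A$-invariant and the twisted action is compatible with the semidirect product $N_G(P)A$.
\item In the containment part of (iv), $Q$ is normal in $N_G(Q)$, so it lies in every Sylow $p$-subgroup of $N_G(Q)$. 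The existence part applied to $N_G(Q)$ therefore already gives an $A$-invariant $R\geq Q$, and the appeal to conjugacy in $N_G(Q)/Q$ is unnecessary.
\end{enumerate}
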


\section{The case of finite simple groups}
 We will often use without special references the well-known corollary of the classification that if a simple group $G$ admits a group of coprime automorphisms $A$ of order $e\ne1$, then $G = L(q)$ is a group of Lie type, $A=\langle\al\rangle$ is cyclic, and $\alpha$ is a field automorphism. Furthermore, $C_G(\alpha) = L(q_0)$ is a group of the same Lie type defined over a smaller field such that $q = q_0^e$ (see \cite{GLS3}).

We will need the following result, which is Theorem 2.2 in \cite{BFMMNSST}.

\begin{theorem} Let $S$ be a finite nonabelian simple group of order divisible by an
odd prime $p$. Then $S$ contains a conjugacy class of $p$-elements of even size.\end{theorem}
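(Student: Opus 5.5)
We will show that $S$ has a nontrivial $p$-element $x$ such that no Sylow $2$-subgroup of $S$ centralizes $x$. This is enough. If $T$ is a Sylow $2$-subgroup of $S$, then $|x^S|=|S:C_S(x)|$ is odd exactly when $C_S(x)$ contains a Sylow $2$-subgroup of $S$. Since all Sylow $2$-subgroups are conjugate, that happens exactly when some conjugate of $x$ lies in $C_S(T)$.

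So suppose, for a contradiction, that every $p$-element of $S$ has odd class size. Then every $p$-element of $S$ is conjugate into $C_S(T)$. In particular $C_S(T)$ has order divisible by $p$, hence is not a $2$-group. The plan is to reach a contradiction by exploiting this through the classification of finite simple groups.

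\textbf{Step 1: the generic case.} The key input is the known description of centralizers of Sylow $2$-subgroups in finite simple groups (Kondrat'ev's theorem on normalizers of Sylow $2$-subgroups, which rests on the classification). For almost all finite simple groups $S$ one has $C_S(T)=Z(T)$, so $C_S(T)$ is a $2$-group and the contradiction is immediate. The exceptions are essentially:
\begin{itemize}
\item groups of Lie type in characteristic $2$;
\item $J_1$;
\item the Ree groups ${}^2G_2(3^{2m+1})$;
\item some groups $\PSL(2,q)$, together with a few other small cases.
\end{itemize}

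\textbf{Step 2: the exceptions, handled directly.}
\begin{itemize}
\item \emph{Lie type in characteristic $2$.} Here $T=U$ is the unipotent radical of a Borel subgroup $B=UH$, and $C_S(U)\le B$. The torus $H$ acts on $U$ with $C_H(U)$ central. So $C_S(U)$ is a $2$-group in the simple (adjoint) case, apart from a few tiny groups that are isomorphic to other groups already covered.
\item \emph{Ree groups.} Here $C_S(T)=T\times W$ with $W$ cyclic of order dividing $(q-1)/2$. For $p$ not dividing $|W|$ we are done. Otherwise we pick an element of order $p$ in a different conjugacy class; since $p$ divides $q-1$, I would expect to use a regular semisimple element of the split torus whose centralizer has odd order, or compare class sizes directly.
\item \emph{$\PSL(2,q)$ exceptions.} The same split-torus or class-size argument applies, using the explicit list of conjugacy classes and centralizer orders.
\item \emph{$J_1$ and the remaining small cases.} These are checked directly from the ATLAS: one only needs a $p$-element class whose centralizer has order not divisible by $|S|_2$.
\end{itemize}

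\textbf{Cross-check without Kondrat'ev.} Alternatively, one can avoid the centralizer theorem for some families by exhibiting explicit classes:
\begin{itemize}
\item In $A_n$ ($n\ge5$), a $p$-cycle has class size $n!/(p\,(n-p)!)$ when $n-p\ge2$. This is divisible by $n(n-1)$, hence even. The cases $n-p\le1$ are handled by a small direct computation.
\item In groups of Lie type in characteristic $p$, a regular unipotent element has centralizer equal to $Z(S)$ times a $p$-group. So its class size is divisible by $|S|_2>1$.
\end{itemize}

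\textbf{Main obstacle.} The hardest part is the cross-characteristic groups of Lie type. There $p$ may divide only tori of even order, so a naive choice of a regular semisimple element does not obviously work. This is precisely why I would route the argument through the structure of $C_S(T)$ rather than through explicit elements: it reduces the problem to the short list of groups in which $C_S(T)\ne Z(T)$, and those can be checked by hand.
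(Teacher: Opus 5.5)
The paper does not prove this statement. It quotes it as Theorem 2.2 of \cite{BFMMNSST}, so your proposal has to stand on its own.

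Your reduction is correct: $|x^S|$ is odd if and only if some conjugate of $x$ lies in $C_S(T)$. The gap is in Step 1, which misstates which simple groups have $C_S(T)$ not a $2$-group. Kondrat'ev's theorem describes $N_S(T)$, and you have not translated it correctly into information about $C_S(T)$. The exceptional cases include infinite families with $q$ odd: $\PSL(n,q)$ and $\mathrm{PSU}(n,q)$ with $n\ge 3$ (for suitable $n$ and $q$), and $E_6(q)$, ${}^2E_6(q)$. In these groups $N_S(T)=T\times C$, where $C$ is a cyclic group of odd order, often nontrivial, coming from the centre of a Levi subgroup containing $T$.

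Here is a concrete instance. Take $\PSL(3,11)=SL(3,11)$, so that $|S|_2=16$. A Sylow $2$-subgroup $T$ can be chosen inside $\{\mathrm{diag}(A,\det A^{-1}) : A\in GL(2,11)\}$. Then $z=\mathrm{diag}(a,a,a^{-2})$, with $a$ of order $5$, is a noncentral element of order $5$ centralizing $T$.

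None of these groups is on your list, and they are exactly the cross-characteristic groups you named as the main obstacle. For them, knowing that $p$ divides $|C_S(T)|$ gives no contradiction when $p$ divides $|C|$. You must use the full hypothesis and exhibit a $p$-element that is not conjugate into $C_S(T)$. In $\PSL(3,11)$ one can take $\mathrm{diag}(\zeta,\zeta^{-1},1)$ with $\zeta$ of order $5$. Its centralizer is a torus of order $100$, whose $2$-part is $4<16$, so its class has even size. Doing this uniformly for all $n$, both unitary and linear types, and $E_6^{\pm}(q)$ is the real content of the theorem in cross characteristic. Your proposal postpones exactly this work behind a list that omits it.

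Conversely, several entries on your list are not exceptions at all. This suggests you conflated $N_S(T)\neq T$ with $C_S(T)\neq Z(T)$.
\begin{itemize}
\item For ${}^2G_2(q)$ one has $C_S(t)=\langle t\rangle\times \PSL(2,q)$ with $q\equiv 3 \pmod 8$. The Klein four Sylow subgroup of $\PSL(2,q)$ is self-centralizing, so $C_S(T)=T$, not $T\times W$.
\item Your fix for the Ree groups is also off. The split torus has even order $q-1$; all that matters is that its $2$-part, namely $2$, is smaller than $|S|_2=8$.
\item Likewise $C_S(T)$ is a $2$-group for $J_1$ and for every $\PSL(2,q)$.
\item In your alternating-group check, $n!/(p\,(n-p)!)$ need not be divisible by $n(n-1)$; for $n=6$, $p=3$ it equals $40$. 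It is still even, because a product of $p\ge 3$ consecutive integers is even and $p$ is odd.
\end{itemize}
These slips are harmless on their own. Together, though, Step 1 is not a correct use of Kondrat'ev's theorem, and the hard case remains unproved.
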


A straightforward consequence of the above result is the following corollary, which will be fundamental in the sequel.

\begin{corollary}\label{cor-simple-p-q}  Let $S$ be a finite simple group and let $p$ be an odd prime dividing the order of $S$. Then no Sylow $2$-subgroup of $S$ is centralized by a Sylow $p$-subgroup.
\end{corollary}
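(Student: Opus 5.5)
We argue by contradiction. Suppose $S$ is a finite simple group, $p$ is an odd prime dividing $|S|$, $T$ is a Sylow $2$-subgroup of $S$, and $R$ is a Sylow $p$-subgroup of $S$ with $[T,R]=1$. Everything is conjugation inside $S$ here, so it is enough to handle one such pair. The aim is to show that every $p$-element of $S$ then has a conjugacy class of odd size. This contradicts the theorem from \cite{BFMMNSST} quoted just above, which says $S$ has a conjugacy class of $p$-elements of even size.

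Let $x\in S$ be any $p$-element. By Sylow's theorem, some conjugate $x^g$ lies in $R$. Since $R$ centralizes $T$, we get $T\le C_S(x^g)$. Then
\[ |x^S| = |x^{gS}| = |S:C_S(x^g)| \]
divides $|S:T|$, which is odd because $T$ is a Sylow $2$-subgroup of $S$. Conjugate elements have classes of the same size, so $|x^S|$ is odd. This holds for every $p$-element $x$, which is the required contradiction.

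The argument is short and I expect no real obstacle. The only points to check are these. The statement covers every Sylow $2$-subgroup and every Sylow $p$-subgroup, and these are all conjugate, so fixing one pair loses nothing. The quoted theorem needs $p$ odd and $S$ nonabelian simple, which are exactly the hypotheses here. The identity element, as a $p$-element, has class size $1$, which is odd; this is harmless, since the theorem guarantees some $p$-class of even size and our argument shows that every $p$-class has odd size.
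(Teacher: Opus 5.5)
Your proof is correct and is the derivation the paper intends when it calls this corollary a straightforward consequence of the theorem from \cite{BFMMNSST}. If a Sylow $p$-subgroup centralized a Sylow $2$-subgroup, every $p$-element would have a conjugate whose centralizer contains a Sylow $2$-subgroup, so every conjugacy class of $p$-elements would have odd size, contradicting that theorem.
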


\begin{lemma}\label{cent-small}  Let $G$ be a finite simple group admitting a nontrivial 
 coprime automorphism $\alpha$. Let $r$ be an odd prime divisor of $|C_G(\alpha)|$. Then no Sylow $r$-subgroup of $C_G(\alpha)$ centralizes a Sylow $2$-subgroup of $C_G(\alpha)$. Moreover, either $3$ or $5$ divides the order of $C_G(\alpha)$.
\end{lemma}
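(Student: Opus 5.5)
The plan is to use the structure of coprime automorphisms of simple groups recalled at the start of this section. Since $\alpha$ is a nontrivial coprime automorphism of the simple group $G$, we have $G=L(q)$ of Lie type, $\alpha$ is a field automorphism of order $e>1$ with $(e,|G|)=1$, and $C_G(\alpha)=L(q_0)$ with $q=q_0^e$. The key issue is that $L(q_0)$ need not be simple: for very small $q_0$ it can be soluble. So the approach splits into two cases, according to whether $L(q_0)$ is "essentially simple" or one of the finitely many small exceptions.

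\textbf{Generic case.} Suppose $C_G(\alpha)=L(q_0)$ has a simple normal subgroup $S$ with $C_G(\alpha)/S$ a $\{2,3\}$-group or smaller; for instance, the derived subgroup of $L(q_0)$ is usually simple. Let $r$ be an odd prime dividing $|C_G(\alpha)|$. Suppose a Sylow $r$-subgroup $R$ centralizes a Sylow $2$-subgroup $T$ of $C_G(\alpha)$. If $r$ divides $|S|$, then $R\cap S$ is a Sylow $r$-subgroup of $S$ and $T\cap S$ is a Sylow $2$-subgroup of $S$, and they commute. This contradicts Corollary \ref{cor-simple-p-q}.

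\textbf{Outer part and the "moreover" claim.} Next handle primes $r$ dividing only $|C_G(\alpha):S|$, which arise through diagonal automorphisms of the quasisimple group. Here $r$-elements act nontrivially on $S$ via inner-diagonal automorphisms. I would argue that such an $r$-element cannot centralize a full Sylow $2$-subgroup of $S$, since the centralizer of a Sylow $2$-subgroup in the inner-diagonal group is small. Alternatively, I would avoid this by working with the precise order formulas. For the last claim, the order of any group of Lie type $L(q_0)$ is divisible by $|\mathrm{SL}_2(q_0)|$ or $|\mathrm{PSL}_2(q_0)|$, or a Suzuki or Ree analogue, so it is divisible by $q_0(q_0^2-1)$. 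One of $q_0-1,q_0,q_0+1$ is divisible by $3$ unless $3\mid q_0$, and in that case $3$ still divides the order. For Suzuki groups ${}^2B_2(q_0)$, where $3$ never divides the order, $5$ divides $q_0^4+1$. Since $e$ is coprime to $|G|$, $e\neq 2,3$ in relevant cases. This does not affect the claim but constrains $q_0$.

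\textbf{Small exceptions.} This is the main obstacle. The cases to check are those where $L(q_0)$ is not simple: $\mathrm{PSL}_2(2)\cong S_3$, $\mathrm{PSL}_2(3)$, ${}^2B_2(2)$ of order $20$, $\mathrm{PSU}_3(2)$, ${}^2G_2(3)$, $\mathrm{Sp}_4(2)$, $G_2(2)$, ${}^2F_4(2)$. Note also that $e$ coprime to $|G|$ excludes $q_0=2,3$ in many types. For example, $3$ always divides $|L(2^e)|$ when $L\ne {}^2B_2$, so $e\ne3$; but $e=5$ over $q_0=2$ does occur, as in the paper's example. For each surviving small group I would verify directly that no odd Sylow subgroup centralizes a Sylow $2$-subgroup:
\begin{itemize}
\item in $S_3$, $C_3$ does not commute with a transposition;
\item in $5{:}4$, the $C_5$ is not centralized by $C_4$;
\item $3^{1+2}{:}Q_8$ and ${}^2G_2(3)=\mathrm{P\Gamma L}_2(8)$ are handled similarly;
\item $S_6$, $G_2(2)'$ extended, and the Tits group are handled via their simple derived subgroups as in the generic argument.
\end{itemize}
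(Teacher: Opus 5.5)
Your proposal is correct and follows essentially the paper's route: use $C_G(\alpha)=L(q_0)$, apply Corollary~\ref{cor-simple-p-q} to $L(q_0)$ or to its simple derived subgroup, and check the same short list of small exceptions by hand. Your ``outer part'' paragraph is superfluous, since $C_G(\alpha)$ is exactly the simple-type group $L(q_0)$ (so no diagonal extensions occur), and in the non-soluble exceptions $B_2(2)$, $G_2(2)$, ${}^2G_2(3)$, ${}^2F_4(2)$ every odd prime dividing $|C_G(\alpha)|$ already divides $|C_G(\alpha)'|$; also, $\mathrm{PSU}_3(2)\cong 3^2{:}Q_8$ has order $72$ (the group $3^{1+2}{:}Q_8$ is $\mathrm{SU}_3(2)$), though the direct check goes through the same way.
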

\begin{proof} We have that $G = L(q)$ is a group of Lie type, $\alpha$ is a field automorphism and $C_G(\alpha) = L(q_0)$
is again a finite simple group unless it is one of the following groups:   $A_1(2)$, $A_1(3)$, $B_2(2)$, $G_2(2)$, $^2A_2(2^2)$, $^2B_2(2)$, $^2G_2(3)$ and $^2F_4(2)$. In the above list $A_1(2)$, $A_1(3)$, $B_2(2)$, $^2A_2(2^2)$ and $^2B_2(2)$ are soluble of order divisible by $2$ or $5$, and it can be checked directly that in those cases no  Sylow $r$-subgroup of $C_G(\alpha)$  centralizes a Sylow $2$-subgroup of $C_G(\alpha)$. In the other cases, we have that $C_G(\alpha)'$ is simple and $r$ divides $C_G(\alpha)'$. So for these groups, as well as for the cases where $C_G(\alpha)$ is a simple group, Corollary \ref{cor-simple-p-q} applies. Since the order of a finite simple group is always divisible by $3$ or $5$, the result follows.
\end{proof}

%{\color{blue} $A_1(2)$ (isomorphic to $\Sym(3)$), $A_1(3)$ (isomorphic to the alternating group $\alt(4)$), $B_2(2)$ (which is isomorphic to $\Sym(6)$), $G_2(2)$ (whose derived subgroup is simple), $^2A_2(2^2) \cong PSU_3(2)$ (solvable of order 72) $^2B_2(2)$  (solvable of order 20), $^2G_2(3)$  (whose derived subgroup is simple) and $^2F_4(2)$ (whose derived subgroup is simple).}

\begin{lemma}\label{lemma-simple-coprime} Let $G$ be  a  finite simple group of Lie type in characteristic $\elp$ admitting a coprime automorphism  $\alpha$. Let $r\ne\elp$ be a prime divisor of $|C_G(\al)|$.
Then any $\alpha$-invariant Sylow $r$-subgroup of $G$ is contained in $C_G(\alpha)$. 
\end{lemma}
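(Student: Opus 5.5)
Let $R$ be an $\alpha$-invariant Sylow $r$-subgroup of $G$ and set $C=C_G(\alpha)=L(q_0)$, where $q=q_0^e$ and $e=|\alpha|$ is coprime to $|G|$. The plan is to compare orders. By Lemma \ref{coprime}(iv), $R\cap C$ is a Sylow $r$-subgroup of $C$, since any $\alpha$-invariant Sylow $r$-subgroup of $G$ contains a Sylow $r$-subgroup of $C_G(\alpha)$. Also $R\cap C=C_R(\alpha)$. Hence it suffices to show that the $r$-parts of $|G|$ and $|C|$ coincide, i.e.\ $|G|_r=|L(q_0)|_r$. Then $C_R(\alpha)=R$, so $R\le C_G(\alpha)$.

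To compare orders, I would use the standard order formula. Up to the factor $\gcd$ (the order of the diagonal automorphism group) and the power of $\elp$, $|L(q)|$ is a product of cyclotomic-type factors $q^{d_i}-\epsilon_i$; the same formula with $q_0$ gives $|L(q_0)|$ (the twisted cases being handled analogously, with $q^{d}-(-1)^d$ factors or Suzuki/Ree polynomials). Since $r\ne \elp$ divides $|C|$, some factor $q_0^{d}-\epsilon$ is divisible by $r$.

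The key tool is a lifting-the-exponent argument. Let $e$ be odd, coprime to $|G|$, and in particular coprime to $r$ and to $q_0^d-\epsilon$. Then
\[
\frac{q_0^{de}-\epsilon^e}{q_0^{d}-\epsilon}=\sum_{j=0}^{e-1} q_0^{dj}\epsilon^{e-1-j}\equiv e\,\epsilon^{e-1}\pmod{q_0^d-\epsilon}.
\]
So if $r\mid q_0^d-\epsilon$, this quotient is $\equiv e \not\equiv 0\pmod r$, and $\epsilon^e=\epsilon$ because $e$ is odd. Therefore $(q^{d}-\epsilon)_r=(q_0^d-\epsilon)_r$ whenever $r$ divides $q_0^d-\epsilon$. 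If instead $r\nmid q_0^d-\epsilon$, I need $r\nmid q_0^{de}-\epsilon$. The argument is as follows: the multiplicative order of $q_0$ modulo $r$ is unchanged by raising to the power $e$, because $e$ is coprime to $r-1$. This holds since $e$ is coprime to $|G|$, and $r-1$ divides $|G|$ because $G$ contains elements of order $r$ together with a Borel subgroup structure; alternatively, $\gcd(e,r-1)=1$ follows because the primes dividing $r-1$ are less than $r$ and divide $|G|$ (every prime up to $r$ divides $|G|$? not in general). The safer route is to note that $r\mid q^{m}-1$ with $m$ minimal, so $r\mid |G|$ forces $m$ to be among the degrees, and $|C|_r\neq 1$ already fixes the order of $q_0$ mod $r$; then $\mathrm{ord}_r(q)=\mathrm{ord}_r(q_0)/\gcd(e,\mathrm{ord}_r(q_0))$. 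Since $\mathrm{ord}_r(q_0)$ divides some degree $d_i$ and $|G|$ is divisible by primitive prime divisors related to these, I would check that $\gcd(e,\mathrm{ord}_r(q_0))=1$ using the coprimality of $e$ to $|G|$ together with Zsigmondy-type divisibility. With this, the $r$-part of each factor is preserved termwise. The $\gcd$ correction factor must also be handled: the orders of the diagonal automorphism group over $q$ and over $q_0$ agree on their $r$-parts by the same computation, since $\gcd(n,q-1)$ versus $\gcd(n,q_0-1)$ differ only by primes dividing $e$.

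Multiplying the factors gives $|G|_r=|C|_r$, which is what is needed. I expect the main obstacle to be the uniform bookkeeping of the cyclotomic factors across all Lie types, including the twisted and Suzuki–Ree groups, together with ensuring $\gcd(e,\mathrm{ord}_r(q_0))=1$. A cleaner alternative I would try first is to write $|L(q)|_{\elp'}=\prod\Phi_m(q)^{a_m}$ and use $\Phi_m(q_0^e)=\prod_{k\mid e}\Phi_{mk}(q_0)$ for $(e,m)=1$. The extra factors $\Phi_{mk}(q_0)$ with $k>1$ are coprime to $r$ whenever $r\mid\Phi_{m'}(q_0)$ for some $m'$ with $\Phi_{m'}$ occurring in $|C|$, because $r$ divides $\Phi_{n}(q_0)$ only for $n=\mathrm{ord}_r(q_0)\cdot r^j$, and $k$ is coprime to $r$.
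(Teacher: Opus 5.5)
Your reduction to $|G|_r=|C_G(\alpha)|_r$ is fine. So is your congruence $\frac{q_0^{de}-\epsilon}{q_0^{d}-\epsilon}\equiv e\not\equiv 0 \pmod r$ for the factors with $r\mid q_0^{d}-\epsilon$.

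\textbf{The gap.} You never prove the complementary case: that $r\nmid q_0^{d}-\epsilon$ implies $r\nmid q_0^{de}-\epsilon$. Equivalently, you never prove $\gcd(e,\mathrm{ord}_r(q_0))=1$, and this is the real content of the lemma. Your three attempts at it do not work:
\begin{itemize}
\item Your first justification is false: in general neither $r-1\mid |G|$ nor $\gcd(e,r-1)=1$ holds. Take $G=\PSL_2(2^{25})$ with a field automorphism $\alpha$ of order $e=5$. It is coprime, since $2^{25}\equiv 2 \pmod 5$ gives $5\nmid 2^{25}(2^{50}-1)$. Then $C_G(\alpha)=\PSL_2(2^5)$ has order divisible by $r=11$, yet $5\mid r-1$.
\item Your fallback (``Zsigmondy-type divisibility'') is only announced, not carried out.
\item The cyclotomic alternative has the same hole. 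The identity $\Phi_m(q_0^e)=\prod_{k\mid e}\Phi_{mk}(q_0)$ needs $(e,m)=1$ for every index $m$ that occurs, which you do not justify. Your closing reason (``$k$ is coprime to $r$'') does not rule out $mk=\mathrm{ord}_r(q_0)\,r^j$ with $1<k\mid \mathrm{ord}_r(q_0)$. Excluding that again needs $\gcd(e,\mathrm{ord}_r(q_0))=1$.
\end{itemize}

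\textbf{The missing idea.} This is the key step of the paper's proof, and it is elementary; no Zsigmondy is needed. Write $|L(q)|=\frac{1}{\delta}q^N\prod_i(q^{d_i}\pm 1)$. Every prime $t$ dividing some degree $d_i$ divides $|G|$: for $t\ne \elp$, the list of degrees shows that $t-1$ divides some $d_j$, and $t\mid q^{t-1}-1$ by Fermat. Since $e$ is coprime to $|G|$, this gives $\gcd(e,d_i)=1$ for all $i$. Next, $r\ne\elp$ divides $|L(q_0)|$, so it divides some $q_0^{d_i}\pm 1$, and hence $\mathrm{ord}_r(q_0)\mid 2d_i$. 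As $e$ is odd, $\gcd(e,\mathrm{ord}_r(q_0))=1$. Therefore $r\mid q_0^{ej}\pm1$ forces $r\mid q_0^{j}\pm1$. Combined with your congruence, this shows that $r$ does not divide $\prod_i (q_0^{ed_i}\pm1)/(q_0^{d_i}\pm1)$. The $\elp'$-part of $|L(q):L(q_0)|$ divides that product, because the $\gcd$ factor for $q_0$ divides the one for $q$; so the separate $\gcd$ bookkeeping is unnecessary. Thus the index is prime to $r$. With this step supplied, your argument is essentially the paper's.
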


\begin{proof} 
If $\al$ is trivial, then the result is obviously true. Otherwise,  $\alpha$ is a field automorphism of $G = L(q)$ and $C_G(\alpha) = L(q_0)$, where $q=q_0^e$ and $e=|\al|$. 
 By hypothesis,  $q_0$ as well as $q$ are $\elp$-powers. 
 
 We look at the formulae for the orders of finite simple groups (see  e.g. \cite{Carter2}) and we see that, apart from the case of $^3D_4(q)$,  the $\elp'$-divisors of  the index of $ L(q_0)$ in  $L(q)$ are divisors of a product of the form
 \begin{equation}\label{prodS}
  \prod_{i \in I} \frac{ (q_0^e)^{d_i} \pm 1 }{ q_0^{d_i} \pm 1}
  \end{equation}
  where each $d_i$ is a positive integer. 
  
  Note that  $\pi(d_i) \subseteq \pi (|L(q)|)$. 
   Indeed, if $p \neq t \in \pi(d_i)$, then, from  the formula of the order of $L(q)$, we see that  $(t-1)$ divides $d_j$ for some $j$;
     then, since $t$ divides $(q^{t-1}-1)$, it follows that $t$ divides $|L(q)|$. 

 When $G=^3\!\!D_4(q)$ the same argument applies, noting that $q^8+q^4+1=(q^{12}-1)/(q^4-1)$.

Keeping in mind that $\al$ is a coprime automorphism, we deduce that $(d_i, e)=1$ for every $i$.  Since $r$ divides $|L(q_0)|$,  
 the order of   $q_0$ in the field with $r$ elements divides $d_i$ (or $2d_i)$ for some $i$, and so it is prime to $e$. 
   This means that when $r$ divides the factor $\left((q_0^e)^j \pm 1\right)$,  then it divides $(q_0^j \pm 1)$ too.  
   Moreover, if  $r$ divides $(q_0^j \pm 1)$, then 
  $r$  does not divide $\left((q_0^e)^j \pm 1\right)/(q_0^j \pm 1)$. 
  This shows that $r$ does not divide the product \eqref{prodS}
 and so  the index of $L(q_0)$ in $L(q)$ is an $r'$-number.

So $C_G(\alpha)$ contains an $\alpha$-invariant Sylow $r$-subgroup of $G$, and then also 
 any other  $\alpha$-invariant Sylow $r$-subgroup of $G$, since they are all conjugate by  elements of $C_G(\alpha)$. 
 \end{proof}

\begin{theorem}\label{thm-simple-coprime} Let $G$ be a finite simple group and let  $\alpha$ be   a coprime automorphism of $G$.
 Let $r$ be an odd prime divisor of $|G|$. Then no Sylow $r$-subgroup of $G$ centralizes a Sylow $2$-subgroup of $C_G(\alpha)$. 
\end{theorem}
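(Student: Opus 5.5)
Throughout, $R$ is a Sylow $r$-subgroup of $G$ and $T$ is a Sylow $2$-subgroup of $C_G(\alpha)$; we suppose $R$ centralizes $T$ and aim for a contradiction.

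\emph{Trivial case.} If $\alpha$ is trivial, then $T$ is a Sylow $2$-subgroup of $G$. The claim is then exactly Corollary \ref{cor-simple-p-q}.

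\emph{Reduction to Lie type.} Assume $\alpha\ne1$. Then $G=L(q)$ is a group of Lie type in some characteristic $\elp$, $\alpha$ is a field automorphism of order $e$, $q=q_0^e$, and $C_G(\alpha)=L(q_0)$. Since $|C_G(\alpha)|$ is even, $T\ne1$. I would split according to whether $\elp=2$.

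\emph{Case $\elp=2$.} Here $T$ is a nontrivial unipotent $2$-subgroup; in fact it is a Sylow $2$-subgroup of $L(q_0)$, i.e. the unipotent radical of a Borel subgroup of $L(q_0)$. By the Borel--Tits theorem, the centralizer in $G$ of a nontrivial unipotent subgroup lies in a proper parabolic subgroup $P_J=U_JL_J$. I would make this concrete: $T$ contains root elements of $L(q_0)$ that are also long root elements of $G$. Their centralizer has the form $Q\rtimes X$, with $Q$ a $2$-group and $X$ a Levi-type group. Since $r$ is odd, $R$ would have to embed in $X$, and $|X|_r<|G|_r$ because the Levi factor misses some cyclotomic factors. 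The cleanest route is probably the following. The centralizer of all of $T$ in $G$ is contained in $N_G(T)\le B(q)$-type structure; more precisely $C_G(U_0)\le U$, where $U_0$ is the unipotent radical for $q_0$, because $U_0$ contains a regular unipotent element of $G$ (a regular unipotent of $L(q_0)$ is regular in $L(q)$). The centralizer of a regular unipotent element is $Z(G)\times$ (a unipotent group), so it is a $2$-group modulo the center. Hence $R\le C_G(T)$ forces $R=1$, a contradiction. The small exceptional cases where $L(q_0)$ is not simple I would handle by the same regular-unipotent argument.

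\emph{Case $\elp$ odd.} If $r\ne \elp$ and $r$ divides $|C_G(\alpha)|$, Lemma \ref{lemma-simple-coprime} gives an $\alpha$-invariant Sylow $r$-subgroup inside $C_G(\alpha)$. To use this, I would replace $R$ by a conjugate: since $R$ centralizes $T$, we have $R\le C_G(T)$, which is $\alpha$-invariant when $T$ is taken $\alpha$-invariant (it is fixed by $\alpha$). Coprimality then gives an $\alpha$-invariant Sylow $r$-subgroup of $C_G(T)$. This is still a Sylow $r$-subgroup of $G$ centralizing $T$, and by Lemma \ref{lemma-simple-coprime} it lies in $C_G(\alpha)$. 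Thus a full Sylow $r$-subgroup of $C_G(\alpha)$ centralizes a Sylow $2$-subgroup of $C_G(\alpha)$, contradicting Lemma \ref{cent-small}. If $r\nmid |C_G(\alpha)|$ (including possibly $r=\elp$ or $r$ not dividing $|L(q_0)|$), the same $\alpha$-invariance trick still applies. We obtain $R$ $\alpha$-invariant with $R\cap C_G(\alpha)=1$, so $\alpha$ acts fixed-point-freely on $R$. The subgroup $R\,T$ is then $\alpha$-invariant, and $T$ is a Sylow $2$-subgroup of $L(q_0)$, of odd characteristic, which contains a noncentral involution of $G$. Its centralizer is a reductive subgroup $C$, essentially $L_1(q)L_2(q)$. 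For $r=\elp$ this cannot contain a Sylow $\elp$-subgroup, since it is not parabolic. For $r\ne\elp$ one compares $|C|_r$ with $|G|_r$ via the cyclotomic factorization.

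\emph{Main obstacle.} I expect the step for $r\nmid |C_G(\alpha)|$ with $\elp$ odd to be the hardest, because it is genuinely order-theoretic. My first attempt there would be to show that $C_G(T)$ is already small. Since $T$ contains a Sylow $2$-subgroup of $L(q_0)$, hence a copy of a Sylow $2$-subgroup of a maximal split torus normalizer, the group $C_G(T)$ should be contained in $Z(G)\cdot T\cdot(\text{torus})$. So $C_G(T)$ would be a $\{2\}\cup\pi(q_0\pm1)$-group. The orders of these tori are prime to $r$ whenever $r$ is coprime to $|L(q_0)|$, which settles that subcase. For $r=\elp$ the torus part has $\elp'$ order, so $R$ would be trivial.
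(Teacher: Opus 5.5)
Your trivial case and your characteristic-$2$ case are fine and coincide with the paper's argument: a Sylow $2$-subgroup of $L(q_0)$ contains a regular unipotent element of $G$, and its centralizer in $G$ is a $2$-group. In odd characteristic, your subcase $r\ne\elp$, $r\mid |C_G(\alpha)|$ is also correct. Taking an $\alpha$-invariant Sylow $r$-subgroup of the $\alpha$-invariant group $C_G(T)$, Lemma~\ref{lemma-simple-coprime} places it inside $C_G(\alpha)$, and Lemma~\ref{cent-small} gives the contradiction.

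The gap is in the remaining odd-characteristic cases.

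\emph{The case $r=\elp$ is misfiled.} $\elp$ always divides $|L(q_0)|$, so $r=\elp$ does not belong under $r\nmid|C_G(\alpha)|$. Moreover $R\cap C_G(\alpha)\ne1$ there, so your fixed-point-free setup does not apply.

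\emph{The case $r\nmid |C_G(\alpha)|$: neither sketch works.}
\begin{itemize}
\item Comparing $|C_G(t)|_r$ with $|G|_r$ for one involution $t\in T$ cannot give a contradiction in general. Take $G=\PSL(2,5^7)$ with $\alpha$ a field automorphism of order $7$, which is coprime to $|G|$. Let $r$ be a prime divisor of $(5^7-1)/4$. Then $r\nmid |C_G(\alpha)|=60$. Yet $C_G(t)$ is dihedral of order $5^7-1$ and contains a Sylow $r$-subgroup of $G$.
\item Your claim that $C_G(T)$ is a $\{2\}\cup\pi(q_0\pm1)$-group, or has order prime to every $r\nmid|L(q_0)|$, is false. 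The tori of $G$ centralizing $T$ are defined over $\mathbb{F}_q$, not over $\mathbb{F}_{q_0}$. For example, take $G=\PSL(3,3^5)=SL_3(243)$ with $\alpha$ of order $5$. A Sylow $2$-subgroup $T$ of $G$, which is also Sylow in $C_G(\alpha)=\PSL(3,3)$, can be taken block diagonal of shape $2+1$. It is centralized by $\{\mathrm{diag}(\lambda,\lambda,\lambda^{-2})\}$, a group of order $242=2\cdot 11^2$, while $11\nmid|\PSL(3,3)|$.
\end{itemize}

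\emph{The missing idea.} Apply Lemma~\ref{lemma-simple-coprime} to the prime $2$ rather than to $r$. When $\elp$ is odd, $2\ne\elp$ and $2$ divides $|C_G(\alpha)|$. So a Sylow $2$-subgroup of $C_G(\alpha)$ is already a Sylow $2$-subgroup of $G$. Corollary~\ref{cor-simple-p-q} then finishes the proof for every odd $r$ at once, including $r=\elp$ and $r\nmid|C_G(\alpha)|$, with no case distinction on $r$. This is exactly the paper's argument.
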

\begin{proof} 
If $\al$ is trivial, then the result is Corollary \ref{cor-simple-p-q}. 

Otherwise, $G = L(q)$ is a group of Lie type, $\alpha$ is a field automorphism and $C_G(\alpha) = L(q_0)$.  
Assume $q_0$ even. 
Note that  $C_G(\alpha) =L(q_0)$ contains a regular unipotent element $x$ (see e.g. Proposition 5.1.7 of \cite{Carter2}).
 It follows from Corollary 4.6 of \cite{Hum} that $C_G(x)$ is contained in a   Sylow $2$-subgroup of $G$. 
% \comment{check, we don't have the file }
 Thus no Sylow $r$-subgroup of $G$  centralizes a   Sylow $2$-subgroup of $G$. 
  
 So we are left with the case where $q_0$ is odd. By Lemma \ref{lemma-simple-coprime}, 
  a Sylow $2$-subgroup of $C_G(\al)=L(q_0)$ is also a Sylow $2$-subgroup of $G$. Therefore the result follows from  Corollary \ref{cor-simple-p-q}. 
\end{proof}

\section{Commuting probability of subgroups of coprime orders}\label{tres}  

Somewhat abusing terminology, we say that a Sylow $p$-subgroup of a group $G$ is trivial if $p$ does not divide the order of $G$.

\begin{lemma}\label{product_prob}
  Let $G=S_1\times\dots\times S_t$ be the direct product of finite simple groups ($t$ factors). 
Assume that $A$ is a finite group of coprime automorphisms of $G$. Let $\LL$ be an $A$-invariant Sylow $2$-subgroup, $P$ an $A$-invariant Sylow $3$-subgroup, and $Q$ an $A$-invariant Sylow $5$-subgroup  of $G$. 
 Then: 
\begin{enumerate}
\item  $\pr (\LL_A ,P_A) \pr(\LL_A ,Q_A) \le (3/4)^{t/|A|} $;
\item  If $r$ is an odd prime dividing $|S_i|$ for every $i$ and $R$ is an $A$-invariant Sylow $r$-subgroup  of $G$,  then $\pr (\LL_A,R)  \le (3/4)^{t/|A|}$.
\end{enumerate}
 \end{lemma}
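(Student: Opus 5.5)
The plan is to decompose $G$ into $A$-orbits of simple factors and reduce everything to a single orbit, where the bound $3/4$ will come from the simple-group results of the previous section.

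\textbf{Step 1: orbit decomposition.} The group $A$ permutes the simple factors $S_1,\dots,S_t$, since these are exactly the minimal normal subgroups of $G$. Write $G=G_1\times\dots\times G_k$, where each $G_j$ is the product of the factors in one $A$-orbit. Each orbit has length at most $|A|$, so $k\ge t/|A|$. Each $G_j$ is $A$-invariant, so the $A$-invariant Sylow subgroups split as $\LL=\prod_j \LL_j$, $P=\prod_j P_j$, and so on, with $\LL_j=\LL\cap G_j$ an $A$-invariant Sylow subgroup of $G_j$. Indeed, $\LL\cap G_j$ is Sylow in $G_j$ because $G_j$ is normal. The centralizers also split: $\LL_A=\prod_j (\LL_j)_A$, and similarly for $P_A$ and $Q_A$. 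For subgroups of a direct product taken coordinatewise, the commuting probability is multiplicative:
\[\pr\Bigl(\prod_j H_j,\prod_j K_j\Bigr)=\prod_j \pr(H_j,K_j).\]
It therefore suffices to show, for a single orbit $G_j$, that $\pr((\LL_j)_A,(P_j)_A)\,\pr((\LL_j)_A,(Q_j)_A)\le 3/4$ in part (1), and that $\pr((\LL_j)_A,R_j)\le 3/4$ in part (2).

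\textbf{Step 2: a single orbit.} Let $G_1=S_1\times\dots\times S_m$ be one orbit, let $B=N_A(S_1)$, and fix a transversal of $B$ in $A$. The standard fact is that $C_{G_1}(A)$ projects isomorphically onto $C_{S_1}(B)$ via the diagonal map $x\mapsto (x^{a})_a$; here $B$ acts coprimely on $S_1$. Under this identification, $(\LL_1)_A$ corresponds to a $B$-invariant Sylow $2$-subgroup of $C_{S_1}(B)$. By Lemma \ref{coprime}(iv) and the fact that $\LL\cap S_1$ is $B$-invariant, it is $(\LL\cap S_1)_B$; the same holds for $P$ and $Q$.

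The key elementary fact is that if two subgroups $H,K$ do not commute elementwise, then $\pr(H,K)\le 3/4$. This holds because some $h\in H$ has $C_K(h)$ proper in $K$, hence of index at least $2$, and the set of $h\in H$ centralizing $K$ is a proper subgroup of $H$. Thus at least half of $H$ contributes at most $1/2$, giving
\[\pr(H,K)\le \tfrac12+\tfrac12\cdot\tfrac12=\tfrac34.\]
The bound is also preserved by the coordinatewise projection. For diagonal subgroups it is cleanest to compute directly: the diagonal embedding is an isomorphism onto its image, so $\pr((\LL_1)_A,(P_1)_A)=\pr((\LL\cap S_1)_B,(P\cap S_1)_B)$.

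\textbf{Part (1).} Lemma \ref{cent-small} gives that $3$ or $5$ divides $|C_{S_1}(B)|$. Say it is $3$. Then the Sylow $3$-subgroup of $C_{S_1}(B)$ does not centralize its Sylow $2$-subgroup, so that factor is at most $3/4$, while the other factor is at most $1$. If $B=1$, use Corollary \ref{cor-simple-p-q} in the same way. Here $B$ is cyclic by the remark preceding Lemma \ref{cent-small}, so these results apply to a generator.

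\textbf{Part (2).} Here $R_1=R\cap G_1$ is not diagonal. I would bound $\pr((\LL_1)_A,R_1)$ by projecting onto the first coordinate, using Lemma \ref{quot} with $N=S_2\times\dots\times S_m$. The image of $(\LL_1)_A$ is $(\LL\cap S_1)_B$ and the image of $R_1$ is the Sylow $r$-subgroup $R\cap S_1$ of $S_1$. Since $r$ divides $|S_1|$, Theorem \ref{thm-simple-coprime} (applied with a generator of $B$) says $R\cap S_1$ does not centralize $(\LL\cap S_1)_B$, giving the bound $3/4$.

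\textbf{Main obstacle.} The hardest part is carefully justifying the identification of $C_{G_1}(A)$ and its Sylow subgroups with those of $C_{S_1}(B)$, in particular that $B$ induces coprime automorphisms on $S_1$ with $(\LL\cap S_1)_B$ Sylow in $C_{S_1}(B)$. I would handle this with the standard diagonal description of the fixed points of an induced action.
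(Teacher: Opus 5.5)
Your proposal is correct and is essentially the paper's proof: you split into $A$-orbits, identify $C_U(A)$ with $C_{S_1}(B)$, obtain a noncommuting pair in each orbit from Lemma \ref{cent-small} and Corollary \ref{cor-simple-p-q} for (1) and from Theorem \ref{thm-simple-coprime} for (2), and multiply the per-orbit bounds of $3/4$. The only differences are that you prove the $3/4$ bound directly where the paper cites \cite{DLMS-finite}, and in (2) you project onto $S_1$ via Lemma \ref{quot} instead of lifting the noncommutation to the orbit; one small wording fix is that it is the group induced by $B$ on $S_1$, not $B$ itself, that is cyclic, and the case handled by Corollary \ref{cor-simple-p-q} is when $B$ acts trivially on $S_1$ rather than $B=1$.
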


\begin{proof} 
 
\noindent (1) Set $a=|A|$. Note that $A$ acts on $\{S_1,\dots, S_t\}$ by permuting the simple factors $S_1,\dots,S_t$ and each orbit has length at most $a$.
If $n$ is the number of orbits, then $t\le an$. 

Renumbering, if needed, the simple factors, assume that 
\[U=S_1\times\dots\times S_k\]
 is the product of simple factors in one $A$-orbit. Let $B=N_A(S_1)$ be the stabilizer of $S_1$ under the permutational action.
 Then every element of $C_U(A)$ is of the form $(x,x^{\alpha_1},\dots,x^{\alpha_{k-1}})$ for some $x\in C_{S_1}(B)$ and $\alpha_i\in A$. Thus, there is a natural isomorphism between $C_U(A)$ and $C_{S_1}(B)$.

Observe that $B$ induces a cyclic (possibly trivial) group of coprime automorphisms of $S_1$. Let $\beta$ be a generator of that cyclic group, so that 
 $C_{S_1}(B)=C_{S_1}(\beta)$. 
   Let $\LL_\beta,P_\beta$ and $ Q_\beta$ be the Sylow subgroups of $C_{S_1}(\beta)$ corresponding to $\LL_A,P_A$ and $ Q_A$, respectively, under the above isomorphism. 

  By Lemma \ref{cent-small} and Corollary \ref{cor-simple-p-q}, either $3$ or $5$ divides the order of $C_{S_1}(\beta)$ and either $[\LL_\beta,P_\beta]\ne 1$ or $[\LL_\beta,Q_\beta]\ne 1$ (and both could occur simultaneously). Therefore either $[\LL_A\cap U,P_A\cap U]\ne 1$ or $[\LL_A\cap U,Q_A\cap U]\ne 1$. It follows from Lemma 2.5 and Remark 2.6 in \cite{DLMS-finite} 
  % {\color{blue} (see also Lemma 2.4 in {DGMS1})}
   that  $\pr (\LL_A\cap U,P_A\cap U) \le 3/4$ or  $\pr (\LL_A\cap U,Q_A\cap U) \le 3/4$. Therefore 
\[ \pr (\LL_A\cap U,P_A\cap U)\pr (\LL_A\cap U,Q_A\cap U)  \le 3/4.  \] 

Then, considering the products of simple factors in all orbits $U_1,\dots,U_n$, we get
\begin{eqnarray*}
\pr (\LL_A,P_A) \pr(\LL_A,Q_A) &=& \prod_{i=1}^{r} \pr (\LL_A\cap  U_i,P_A\cap  U_i) \pr(\LL_A\cap  U_i,Q_A\cap  U_i) \\
&\le& (3/4)^r \le (3/4)^{t/a}.
\end{eqnarray*}

\noindent (2) We use the same notation as above. Note that 
 $R\cap S_1$  is a Sylow $r$-subgroup of $S_1$.  By Theorem  \ref{thm-simple-coprime} no Sylow $r$-subgroup of $S_1$ centralizes a Sylow $2$-subgroup of $C_{S_1}(\beta)$, 
 therefore $[\LL_A,R\cap S_1]\ne 1$. It follows that $[\LL_A\cap U,R\cap U]\ne 1$ and thus
\[ \pr (\LL_A\cap U,R\cap U ) \le 3/4. \]
Therefore, considering the products of simple factors in all orbits $U_1,\dots,U_n$, we get 
\[\pr (\LL_A,R) =\prod_{i=1}^{r} \pr (\LL_A\cap  U_i,R\cap  U_i) \le  (3/4)^r \le  (3/4)^{t/a}, \]
as claimed. 
\end {proof}

Every finite group $K$ possesses a series
\[1 = K_0 \le K_1 \le\dots \le K_{2h+1} = K\]
of normal subgroups such that $K_{i+1}/K_i$ is soluble (possibly trivial) if $i$ is
even and a direct product of nonabelian simple groups if $i$ is odd. Following \cite{KhSh15} the minimum number
of insoluble factors in a series of this kind is called the insoluble length $\lambda(K)$ of $K$.  Wilson showed in \cite{Wilson} that if $\mathcal{X}_1,\dots, \mathcal{X}_n$ are classes of finite groups closed with  respect to normal subgroups and subdirect products and 
if $\mathcal{X}$ is the class of finite groups having a normal series of given length $n$ such that
the $i$-th section is a $\mathcal{X}_i$-group, then any pro-$\mathcal{X}$ group has a normal series of length $n$  such that
the $i$-th section is a pro-$\mathcal{X}_i$-group. In particular, a combination of Lemma 2 and Lemma 3 of
\cite{Wilson} shows that if $\mathcal{X}$ is the class of finite groups $K$ such that $\lambda(K) \le l,$ then
any pro-$\mathcal{X}$ group has a normal series of  length at most $2l+1$ each of whose factors is
either prosoluble or a Cartesian product of nonabelian finite simple groups.

For any prime $p$ the non-$p$-soluble length $\lambda_p(G)$ of a finite group is defined by replacing ``soluble" by ``$p$-soluble". Note that, for any positive integer $k$, a finite group has a largest normal subgroup of non-$p$-soluble length at most $k$.
As above, if $\mathcal{X}$ is the class of finite groups $K$ such that $\lambda_p(K) \le l,$ then
any pro-$\mathcal{X}$ group $G$ has a normal series of  length at most $2l+1$ each of whose factors is
either pro-$p$-soluble or a Cartesian product of nonabelian finite simple groups of order divisible by $p$. 

Recall that the generalized Fitting subgroup $F^*(G)$ of a finite group
$G$ is the product of the Fitting subgroup $F(G)$ and all subnormal quasisimple subgroups. Here a group is quasisimple if it is perfect and its
quotient by the centre is a nonabelian simple group.

 The following lemma 
 uses the Schreier conjecture, i.e. the fact that  the outer automorphism groups of finite simple groups are soluble, which is a consequence of the classification of finite simple groups.

\begin{lemma}\label{lambda_total} 
Let $G$ be a finite group. Assume that 
 \[F^{*}(G)=S_1\times\dots\times S_k\]
 is a direct product of $k$ finite simple groups. 
 Then the soluble radical of $G/F^{*}(G)$ has index at most $k!$. 
\end{lemma}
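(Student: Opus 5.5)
Since $F^{*}(G)=S_1\times\dots\times S_k$ with each $S_i$ nonabelian simple, $F(G)=Z(F^*(G))=1$, so $C_G(F^{*}(G))\le F^{*}(G)$. Hence $C_G(F^*(G))=Z(F^*(G))=1$, and $G$ embeds in $\aut(F^{*}(G))$ via conjugation. We work inside $\aut(S_1\times\dots\times S_k)$.

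First, $G$ permutes the set $\{S_1,\dots,S_k\}$ by conjugation. These are exactly the components of $G$, so they are permuted among themselves. This gives a homomorphism $\varphi\colon G\to \Sym(k)$. Its kernel $N=\bigcap_i N_G(S_i)$ has index at most $k!$ in $G$, and it contains $F^*(G)$.

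Next, we show that $N/F^{*}(G)$ is soluble. Each $x\in N$ normalizes every $S_i$, so conjugation gives a homomorphism
\[
N\to \prod_{i=1}^k \aut(S_i).
\]
It is injective because $C_G(F^*(G))=1$. It maps $F^*(G)=\prod_i S_i$ onto $\prod_i \mathrm{Inn}(S_i)$. Therefore $N/F^*(G)$ embeds in
\[
\prod_{i=1}^k \aut(S_i)/\mathrm{Inn}(S_i)=\prod_{i=1}^k \out(S_i).
\]
By the Schreier conjecture each $\out(S_i)$ is soluble, so $N/F^*(G)$ is soluble.

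Consequently $N/F^*(G)$ is a soluble normal subgroup of $G/F^*(G)$, so it lies in the soluble radical $R/F^*(G)$ of $G/F^*(G)$. Hence
\[
|G/F^*(G):R/F^*(G)|\le |G:N|\le k!,
\]
as required.

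The only step needing care is the embedding of $N/F^*(G)$ into $\prod_i\out(S_i)$. To check it, let $x\in N$ induce an inner automorphism on every $S_i$. Then there is $y\in F^*(G)$ inducing the same automorphism on every factor, so $xy^{-1}$ centralizes $F^*(G)$. Thus $xy^{-1}\in C_G(F^*(G))=1$, and $x\in F^*(G)$. The other steps are routine.
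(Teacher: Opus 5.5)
Your proof is correct and follows essentially the same route as the paper. Both arguments embed $G$ into $\aut(F^*(G))$ using $C_G(F^*(G))\le Z(F^*(G))=1$, bound the image in $\Sym(k)$ by $k!$, and use the Schreier conjecture to see that the rest embeds in $\prod_i\out(S_i)$ and so is soluble. Working with the kernel $N$ of the action on $\{S_1,\dots,S_k\}$ is, if anything, cleaner than the paper's embedding into $\bar B\rtimes\Sym(k)$, which only literally makes sense when the $S_i$ are pairwise isomorphic.
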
 
\begin{proof}  
Note that $C_G(F^*(G)) \le Z(F^*(G))$ (see, for instance, \cite[Theorem 9.8]{Isaacs}). Therefore $G$ is isomorphic to a subgroup of the automorphism group $\aut(F^*(G))$, which in turn can be embedded in the semidirect product $B\rtimes \Sym(k)$, where the symmetric group $\Sym(k)$ acts on $B=\aut(S_1)\times\dots\times\aut(S_k)$ by permuting the factors (here the action is not necessarily transitive). 

Therefore $\bar G=G/F^*(G)$ is isomorphic to a subgroup of $\bar B\rtimes\Sym(k),$ where $\bar B=\out(S_1)\times\dots\times\out(S_k)$. For each $i$, the outer automorphism group $\out(S_i)$ is soluble by the Schreier conjecture. Since $\bar G/\bar B$ is isomorphic to a subgroup of $\Sym(k)$, it follows that  the soluble radical of $\bar G$ has index at most $k!$, as desired. 
\end{proof}

We are now ready to prove Theorem \ref{main3}.

 Note that Theorem \ref{main1} is a special case of Theorem \ref{main3}, with $A$ trivial.
 
 We will make use of the well-known fact that if a profinite group $G$ is finite-by-pro-$p$-soluble, then it is virtually pro-$p$-soluble. Indeed, if $K$ is a finite normal subgroup of $G$ such that $G/K$ is pro-$p$-soluble, then $C_G(K)$ is an open pro-$p$-soluble subgroup. 
 
Moreover, if $G$ admits a finite group of automorphisms $A$, then for every open normal subgroup $N$ the subgroup 
 $\cap_{\al\in A} N^{\al}$ is open in $G$. Therefore, we can always assume that  $G$ is an inverse limit of finite groups $G/N$, where $N$ is $A$-invariant, so that $A$ induces a group of  automorphisms on $G/N$. 
  Furthermore, if $A$ is coprime,  then $C_G(A)N/N=C_{G/N}(A)$ by Lemma \ref{coprime}.

\begin{proof}[Proof of Theorem \ref{main3}] 
Recall that  $G$ is a profinite group admitting a  group of coprime automorphisms $A$ of finite order $a=|A|$. Moreover there exists  $\eta>0$, a Sylow $2$-subgroup $ \LL_A$ of $C_G(A)$ and a Sylow $p$-subgroup $P$ of $G$ such that $\pr(P,  \LL_A) \ge \eta$. We want to prove that  $G$ is virtually pro-$p$-soluble.

  Note that, if $N$ is an $A$-invariant normal subgroup of $G$, the hypotheses are inherited both by $N$ and $G/N$. 
  
Let   $\bar G=G/N$  be a finite continuous image  of $G$, where $N$ is a normal $A$-invariant subgroup of $G$, and use the bar notation for the subgroups of $\bar G$. 
 We claim that $\lambda_p(\bar G)$ is $(\eta, a)$-bounded. Without loss of generality, we can assume that the $p$-soluble radical of $\bar G$ is trivial. Then 
 \[ \bar K=F^{*}(\bar G)=S_1\times\dots\times S_t \]
  is a direct product of finite simple groups of order divisible by $p$. 
 Note that there is a Sylow $2$-subgroup $\tilde \LL_A$ of $C_{\bar K}(A)$ and a Sylow $p$-subgroup $\tilde P$ of $\bar K$ such that $\pr ( \tilde P, \tilde \LL_A)\ge\eta$. 
 By Lemma \ref{product_prob}(b), 
\[ \pr(\tilde P, \tilde \LL_A) \le  (3/4)^{t/a}.\] 
It follows that $ (3/4)^{t/a} \ge  \eta$ and so $t \le a \log_{3/4} \eta$ is  $(\eta, a)$-bounded. 
 Therefore,  by Lemma \ref{lambda_total}, 
 we conclude that $\lambda_p(\bar G)$
  is   $(\eta, a)$-bounded, as claimed. 

 As $N$ was an arbitrary open normal $A$-invariant subgroup of $G$ and such subgroups form a basis for neighbourhoods of the identity in $G$, it follows from \cite{Wilson} that  $G$ has a normal $\alpha$-invariant series of $(\eta,a)$-bounded length each of whose factors is either pro-$p$-soluble or a Cartesian product of  finite simple groups whose order is divisible by $p$. 

Let $T$ be a factor in this series of the form  $T=\prod_{i\in I} S_i$, where each $S_i$ is a  finite simple group and $p$ divides $|S_i|$. 
The above argument shows that $|I| \le a \log_{3/4} \eta$ is $(\eta,a)$-bounded and, in particular, $T$ is finite. 

Now the conclusion follows by induction on $\lambda_p(G)$. Namely, if $\lambda_p(G)=0$ then $G$ is pro-$p$-soluble and the result holds. Assume that 
$\lambda_p(G)=h\ge 1$ and let $R$ be the maximal normal pro-$p$-soluble subgroup of $G$. Passing to the quotient $G/R$, without loss of generality we can assume that $R=1$. In view of the above $G$ possesses a finite normal subgroup $T$, which is a direct product of  simple groups whose order is divisible by $p$, such that $\lambda_p(G/T)=h-1$. By induction $G/T$ is virtually pro-$p$-soluble. It follows that $G$ is virtually pro-$p$-soluble as well. Now the proof is complete.
\end{proof}

The proof of Theorem \ref{main2} can be obtained using similar arguments. 

\begin{proof}[Proof of Theorem \ref{main2}] 
Recall that  $G$ is a profinite group admitting a  group of coprime automorphisms $A$ of finite order $a=|A|$. Moreover  $G$  contains $A$-invariant Sylow $2$-subgroup $\LL$,  Sylow $3$-subgroup $P$ and Sylow $5$-subgroup $\R$ such that $\pr( \LL_A,  P_A) \ge \eta$ and $\pr( \LL_A,  \R_A) \ge \eta,$ for some $\eta>0$.
  We want to prove that  $G$ is virtually prosoluble. 

Note that if $N$ is an $A$-invariant normal subgroup of $G$, the hypotheses are inherited by both $N$ and $\bar G=G/N$.  

So, let $\bar G$ be such a quotient with $N$ open in $G$, and let
 \[ \bar K=S_1\times\dots\times S_t\]
 be an $A$-invariant normal subgroup of  $\bar G$
  which is a direct product of finite simple groups. Observe that $C_{\bar K}(A)$ contains a Sylow $2$-subgroup $\tilde \LL_A$, a Sylow $3$-subgroup 
    $\tilde P_A$ and a Sylow $5$-subgroup $\tilde \R_A$ 
    such that $\pr(\tilde \LL_A, \tilde P_A) \ge \eta$ and $\pr(\tilde \LL_A, \tilde \R_A) \ge \eta$. 
By  Lemma \ref{product_prob}(a), 
\[ \pr(\tilde \LL_A, \tilde P_A) \pr(\tilde \LL_A, \tilde \R_A) \le  (3/4)^{t/a},\]
whence $ (3/4)^{t/a} \ge  \eta^2$ and so $t \le a \log_{3/4} \eta^2$  is  $(\eta, a)$-bounded. 
 
We deduce from Lemma \ref{lambda_total} that $\lambda(G/N)$ is   $(\eta, a)$-bounded.  As $N$ was an arbitrary open normal $A$-invariant subgroup of $G$ and such subgroups form a basis for neighbourhoods of the identity in $G$, it follows from \cite{Wilson} that $G$ has a normal $A$-invariant   series of $(\eta,a)$-bounded length each of whose factors is either prosoluble or a Cartesian product of nonabelian finite simple groups. By the above, if an $A$-invariant section $\bar K$ is isomorphic to a Cartesian product of nonabelian finite simple groups, then it is finite. Since a finite-by-prosoluble group is virtually prosoluble, by induction on the length of the series, we conclude that $G$ is virtually prosoluble. 
\end{proof}

\section{Theorem \ref{main44}}  %%%%%%%%%%%%%%%%%%%%%%%%%%%%%%%%%%%%%%%%% 

 We start this section with an example showing that if a profinite group $G$ satisfies the assumptions of Theorem \ref{main44}, 
 then $G$ need not be virtually pro-$p$-soluble.
 
 \begin{example}\label{es1}
{\rm Let $S$ be a finite simple group admitting a coprime automorphism $\al$ and assume that $S$ is of Lie type in an odd characteristic. 
  Then any $\alpha$-invariant Sylow $2$-subgroup of $S$ is contained in $C_S(\alpha)$ (see Lemma \ref{lemma-simple-coprime}). 
 Let $G$ be an infinite Cartesian product of groups isomorphic to $S$. Then $G$ admits the coprime automorphism that acts on every simple factor of $G$ as $\al$. We denote the automorphism of $G$ by the same symbol $\al$ and choose an $\al$-invariant Sylow 2-subgroup $L$ and an $\al$-invariant Sylow $p$-subgroup $P$, where $p$ is an odd prime. Note that $[L,\al]=1$ and therefore $\pr([L,\al],[P,\al])=1,$ while the prosoluble radical of $G$ is trivial.} $\qed$
\end{example}

We now proceed with the proof of Theorem \ref{main44}, which requires some technical lemmas first.

 \begin{lemma}\label{transitive} 
 Let $G$ be a finite group admitting a finite group of coprime automorphisms $A$  such that there is an $A$-invariant Sylow $2$-subgroup 
$L$ and an $A$-invariant Sylow $p$-subgroup $P$, where $p$ is an odd prime, for which $\pr([L,A],[P,A]) \ge \ep >0$. 
   Assume that $G=S_1\times\dots\times S_t$ is the direct product of finite simple groups ($t$ factors) where $p$ divides $|S_i|$ for every $i$, and $A $ transitively permutes the factors $S_1, \dots , S_t$. Then $t$ is $\ep$-bounded. 
\end{lemma}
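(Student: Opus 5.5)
The idea is to find, inside $[L,A]$ and $[P,A]$, about $t/2$ independent "twisted diagonal" copies of a Sylow $2$-subgroup and a Sylow $p$-subgroup of $S_1$. Each copy contributes a factor of at most $3/4$ to the commuting probability, which forces $t$ to be bounded in terms of $\ep$. We may assume $t\ge 2$.

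\emph{Setup.} Since $A$ permutes the factors transitively, all $S_i$ are isomorphic. Since $L$ and $P$ are Sylow subgroups of a direct product, $L=L_1\times\dots\times L_t$ and $P=P_1\times\dots\times P_t$, where $L_i=L\cap S_i$ and $P_i=P\cap S_i$ are Sylow subgroups of $S_i$. Each $P_i\neq 1$ because $p$ divides $|S_i|$.

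\emph{Pairing the factors.} Pair off the factors as $\{S_{2i-1},S_{2i}\}$ for $1\le i\le m=\lfloor t/2\rfloor$. By transitivity, for each $i$ choose $\al_i\in A$ with $S_{2i-1}^{\al_i}=S_{2i}$.

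\emph{The twisted diagonals.} For $x\in L_{2i-1}$, the element $x^{-1}x^{\al_i}$ lies in $[L,A]$, and $x^{\al_i}\in L\cap S_{2i}=L_{2i}$. Because $x^{-1}\in S_{2i-1}$ and $x^{\al_i}\in S_{2i}$ lie in different direct factors, they commute. Hence
\[
(xy)^{-1}(xy)^{\al_i}=x^{-1}x^{\al_i}\,y^{-1}y^{\al_i},
\]
so the map $\varphi_i\colon x\mapsto x^{-1}x^{\al_i}$ is an injective homomorphism. Its image $D_i\le[L,A]\cap(S_{2i-1}\times S_{2i})$ is therefore a subgroup isomorphic to $L_{2i-1}$. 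In the same way, $\psi_i\colon y\mapsto y^{-1}y^{\al_i}$ on $P_{2i-1}$ gives a subgroup $E_i\le[P,A]\cap(S_{2i-1}\times S_{2i})$ isomorphic to $P_{2i-1}$. Working coordinatewise, $\varphi_i(x)$ and $\psi_i(y)$ commute if and only if $[x,y]=1$ and $[x^{\al_i},y^{\al_i}]=1$, that is, if and only if $x$ and $y$ commute. Therefore
\[
\pr(D_i,E_i)=\pr(L_{2i-1},P_{2i-1}).
\]

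\emph{Each pair costs a factor $3/4$.} By Corollary \ref{cor-simple-p-q}, $P_{2i-1}$ does not centralize $L_{2i-1}$. The fact already used in the proof of Lemma \ref{product_prob} (Lemma 2.5 and Remark 2.6 of \cite{DLMS-finite}) states that two noncommuting subgroups have commuting probability at most $3/4$. Hence $\pr(D_i,E_i)\le 3/4$.

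\emph{Combining the pairs.} Set $H=D_1\times\dots\times D_m\le[L,A]$ and $K=E_1\times\dots\times E_m\le[P,A]$. These products sit in disjoint sets of coordinates, so the commuting probability factorizes:
\[
\pr(H,K)=\prod_{i=1}^m\pr(D_i,E_i)\le (3/4)^{m}.
\]
Applying Lemma \ref{sub} twice, once in each argument (using the symmetry of $\pr$), gives
\[
\ep\le\pr([L,A],[P,A])\le\pr(H,K)\le(3/4)^{\lfloor t/2\rfloor}.
\]
Hence $t\le 2\log_{3/4}\ep+1$, which is $\ep$-bounded.

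\emph{Main obstacle.} The only delicate point is exhibiting large subgroups inside $[L,A]$, since $[L,A]$ need not contain any full factor $L_i$. The twisted-diagonal construction resolves this. It works because the map $x\mapsto x^{-1}x^{\al_i}$ is a homomorphism when $\al_i$ moves $S_{2i-1}$ to a different factor, and this is exactly where transitivity and $t\ge 2$ are used.
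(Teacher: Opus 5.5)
\textbf{Gap.} The key step is false. The map $\varphi_i\colon x\mapsto x^{-1}x^{\al_i}$ is not a homomorphism on $L_{2i-1}$ unless $L_{2i-1}$ is abelian. In coordinates of $S_{2i-1}\times S_{2i}$ we have $\varphi_i(x)=(x^{-1},x^{\al_i})$, hence
\[
\varphi_i(xy)=(y^{-1}x^{-1},\,x^{\al_i}y^{\al_i}),\qquad \varphi_i(x)\varphi_i(y)=(x^{-1}y^{-1},\,x^{\al_i}y^{\al_i}),
\]
and these agree only when $xy=yx$. Commutation of the two direct factors lets you move $y^{-1}$ past $x^{\al_i}$, but not past $x^{-1}$. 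So your $D_i$ is the graph of the anti-isomorphism $u\mapsto (u^{-1})^{\al_i}$ from $L_{2i-1}$ onto $L_{2i}$. It is closed under multiplication only if $L_{2i-1}$ is abelian, and the same holds for $E_i$ and $P_{2i-1}$.

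Sylow subgroups of simple groups are in general nonabelian; for example, $\PSL(2,q)$ with $q\equiv\pm1\pmod 8$ has dihedral Sylow $2$-subgroups of order at least $8$. So $D_i$, $E_i$, $H$, $K$ are in general only subsets. The equality $\pr(D_i,E_i)=\pr(L_{2i-1},P_{2i-1})$ then has no meaning in the paper's framework. Moreover, Lemma \ref{sub}, which is a statement about subgroups, does not give $\pr([L,A],[P,A])\le\pr(H,K)$.

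\textbf{Repair.} Replace the images by the subgroups they generate:
\[
D_i=\langle x^{-1}x^{\al_i}\mid x\in L_{2i-1}\rangle,\qquad E_i=\langle y^{-1}y^{\al_i}\mid y\in P_{2i-1}\rangle .
\]
These are subgroups of $[L,A]\cap(S_{2i-1}\times S_{2i})$ and $[P,A]\cap(S_{2i-1}\times S_{2i})$ respectively. Their projections to $S_{2i-1}$ are exactly $L_{2i-1}$ and $P_{2i-1}$. Lemma \ref{quot}, applied with the normal subgroup $S_{2i}$, then gives $\pr(D_i,E_i)\le\pr(L_{2i-1},P_{2i-1})\le 3/4$, because $[L_{2i-1},P_{2i-1}]\ne1$ by Corollary \ref{cor-simple-p-q}. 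You never needed the equality. The rest of your argument (direct product over disjoint blocks, then two applications of Lemma \ref{sub}) goes through unchanged and yields $\ep\le(3/4)^{\lfloor t/2\rfloor}$.

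\textbf{Comparison with the paper.} Once corrected, your route differs from the paper's. The paper projects onto $S_1$ only to obtain $\pr(L_1,P_1)\ge\ep$. It then uses $L=L_1[L,A]$, $P=P_1[P,A]$ and Lemma \ref{prod} to get $\pr(L,P)\ge\ep^3$, and compares this with $\pr(L,P)=\prod_i\pr(L_i,P_i)\le(2/3)^t$. Your corrected version stays inside $[L,A]$ and $[P,A]$ and avoids Lemma \ref{prod} altogether. This is the same projection trick the paper itself uses in the proof of Lemma \ref{non-transitive}.
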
 
\begin{proof} 
We can clearly assume $t > 1$.  
We have that $L=L_1\times\dots\times L_t$ and  $P=P_1\times\dots\times P_t$, where 
 $L_i$ and $P_i$ are nontrivial Sylow subgroups of  $S_i$ for each $i$. 

Choose $\al \in A$ such that $S_1^\al \neq S_1$. 
If $x \in S_1$ and $\pi_1: G \rightarrow S_1$ is the projection on $S_1$, then $\pi_1 ([x,\al])=x^{-1}$. Therefore
$\pi_1 ([L,\al])=L_1$ and $\pi_1 ([P,\al])=P_1$.  
It follows from Corollary \ref{cor-simple-p-q} that $[L_1,P_1] \neq 1$. Hence $\pr (L_1, P_1) \le 2/3$. 

Similarly, $ \pr(L_i, P_i) \le  2/3$ for every $i$ so that
\begin{equation}\label{0}
 \pr(L,P) = \prod_i  \pr (L_i, P_i) \le (2/3)^t .  
\end{equation}

On the other hand, as $\pi_1 ([L,A])=L_1$ and $\pi_1 ([P,A])=P_1$, 
\[ \pr (L_1, P_1) \ge \pr( [L,A],[P,A])\ge \ep. \] 
Moreover, since $L_1$ centralizes $P_j$ for $j\neq 1$, we have 
\begin{equation}\label{1}
 \pr(L_1 ,P) =   \pr (L_1, P_1)\ge \ep.  
\end{equation}
In particular, 
$ \pr(L_1 ,[P, A]) \ge    \pr (L_1, P)\ge \ep .$ 
Since $L= L_1 ^A= L_1 [L,A]$, it follows from the hypothesis and  Lemma \ref{prod} that 
\begin{eqnarray*}  \pr(L ,[P, A]) &=& \pr(L_1 [L,A] ,[P, A])\\
 &\ge& \pr(L_1  ,[P, A]) \pr( [L,A] ,[P, A]) \ge \ep^2.\end{eqnarray*}
Similarly, $   \pr(P ,[L, A])\ge \ep^2$. 
Then,  by Lemma \ref{prod} and \eqref{1}, 
\[  \pr(L , P) \ge  \pr(L_1 ,P) \pr([L, A], P) \ge \ep^3.\] 
Comparing with \eqref{0}, we conclude that $(2/3)^t \ge  \pr(L , P)  \ge \ep^3$, so that $t$ is $\ep$-bounded. 
\end{proof}

 \begin{lemma}\label{non-transitive} 
 Let $G$ be a finite group admitting a group of coprime automorphisms $A$ such that there is an $A$-invariant Sylow $2$-subgroup $L$ and an $A$-invariant Sylow $p$-subgroup $P$, where $p$ is an odd prime, for which $\pr([L,A],[P,A])\ge\ep>0$. Assume that $G=S_1\times\dots\times S_t$ is the direct product of simple groups of order divisible by $p$. Then there exists an $\epsilon$-bounded integer $m$ such that at most $m$ of the factors $S_i$ are not $A$-invariant. 
\end{lemma}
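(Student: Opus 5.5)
The plan is to decompose $G$ along the $A$-orbits on the set of simple factors. This reduces the statement to two bounds: on the number of non-trivial orbits, and on the length of each orbit. The second bound is exactly Lemma \ref{transitive}.

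Since $A$ acts on $G$ by automorphisms and the $S_i$ are the simple components of $G$, $A$ permutes $\{S_1,\dots,S_t\}$. Let $U_1,\dots,U_n$ be the products of the factors in the $A$-orbits, so that $G=U_1\times\dots\times U_n$ with each $U_j$ an $A$-invariant normal subgroup. The non-$A$-invariant factors are exactly those lying in orbits of length at least $2$; let $U_1,\dots,U_k$ be those orbits. Set $L_j=L\cap U_j$ and $P_j=P\cap U_j$. These are $A$-invariant Sylow subgroups of $U_j$, and $L=\prod_j L_j$, $P=\prod_j P_j$. Since the $U_j$ are $A$-invariant and commute pairwise, we get $[L,A]=\prod_j[L_j,A]$ and $[P,A]=\prod_j [P_j,A]$, with $[L_j,A],[P_j,A]\le U_j$. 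Commuting probability is multiplicative over direct products, so
\[ \pr([L,A],[P,A])=\prod_{j=1}^n \pr([L_j,A],[P_j,A]). \]
In particular, $\pr([L_j,A],[P_j,A])\ge\ep$ for every $j$, since every factor is at most $1$.

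To bound the orbit lengths, I apply Lemma \ref{transitive} to each $U_j$ with $j\le k$. The group $U_j$ is a direct product of simple groups of order divisible by $p$, the group $A$ acts transitively on its factors, and $\pr([L_j,A],[P_j,A])\ge \ep$. Hence the number of factors in $U_j$ is at most some $\ep$-bounded number $t_0$.

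To bound the number $k$ of non-trivial orbits, I fix $j\le k$ with a factor $S_1$ of $U_j$ and choose $\al\in A$ with $S_1^\al\ne S_1$. As in the proof of Lemma \ref{transitive}, the projection $\pi_1$ onto $S_1$ satisfies $\pi_1([x,\al])=x^{-1}$ for $x\in S_1$. Therefore $\pi_1([L_j,A])=L\cap S_1$ and $\pi_1([P_j,A])=P\cap S_1$, and these do not commute by Corollary \ref{cor-simple-p-q}. By Lemma \ref{quot}, applied with the normal subgroup $\ker\pi_1$ of $U_j$, we get
\[ \pr([L_j,A],[P_j,A])\le \pr(L\cap S_1,P\cap S_1). \]
The right-hand side is at most $3/4$, because two non-commuting subgroups have commuting probability at most $3/4$; this is the bound from \cite{DLMS-finite} already used in Lemma \ref{product_prob}. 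Combining with the product formula gives $\ep\le (3/4)^k$, so $k\le \log_{3/4}\ep$ is $\ep$-bounded. Consequently $m=k\,t_0$ is $\ep$-bounded, as claimed.

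The only delicate step is the factorization of $[L,A]$ and $[P,A]$ over the orbits, which makes the commuting probability multiplicative. This uses only that each $U_j$ is $A$-invariant, together with the identity $[xy,\al]=[x,\al]^y[y,\al]$ for commuting $x,y$ lying in different $A$-invariant direct factors. The rest is Lemma \ref{transitive} together with the $3/4$ bound.
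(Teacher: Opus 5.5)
Your proposal is correct and follows the paper's proof essentially step for step. You decompose $G$ into the $A$-orbit products (the paper's minimal $A$-invariant normal subgroups $T_i$), use multiplicativity of $\pr$ to get $\pr([L_j,A],[P_j,A])\ge\ep$ on each orbit, bound orbit lengths by Lemma \ref{transitive}, and bound the number of nontrivial orbits by projecting onto a single factor and invoking Corollary \ref{cor-simple-p-q}. The paper uses the constant $2/3$ (valid because one subgroup is a $2$-group and the other a $p$-group with $p$ odd) where you use $3/4$, and your weaker constant works equally well.
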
 
\begin{proof}
Write 
\[ G=T_1\times\dots\times T_s,\] 
where $T_i$ are minimal normal $A$-invariant subgroups of $G$. 
 We have that
  $L=L_1\times\dots\times L_s$ and  $P=P_1\times\dots\times P_s$, where 
 $L_i$ and $P_i$ are Sylow subgroups of  $T_i$.   
 In particular 
 \[ \pr([L,A],[P,A]) = \prod_{i=1}^s \pr([L_i,A],[P_i,A]), \]
  and $ \pr([L_i,A],[P_i,A]) \ge \pr([L,A],[P,A]) \ge \ep$. 
 Thus  we can apply Lemma \ref{transitive} to $T_i$  and deduce that $T_i$ is a product of $\ep$-boundedly many simple factors $S_j$ of $G$.   
 
If $T_i$ is simple, then it is an $A$-invariant simple factor $S_j$, for some $j$. So we can assume that each $T_i$ is not simple and we need to prove that $s$ is $\ep$-bounded. 

 As in Lemma \ref{transitive}, since $T_i = S_{i_1} \times \cdots \times S_{i_t}$ is a product of simple groups, transitively permuted by $A$, and $t>1$, 
  the projection $\bar L_i$ of $[L_i, A]$ on the first component $S_{i_1}$  is a Sylow $2$-subgroup of $S_{i_1}$. Similarly, 
    the projection $\bar P_i$ of $[P_i, A]$ on the first component $S_{i_1}$  is a (non-trivial) Sylow $p$-subgroup of $S_{i_1}$.  
  Therefore, by Corollary \ref{cor-simple-p-q},  $[\bar L_i, \bar P_i] \neq 1,$ hence 
  \[   \pr([L_i,A],[P_i,A])  \le  \pr (\bar L_i, \bar P_i) \le 2/3.\]  
Since this holds for every $i$, we get 
\[  \pr([L,A],[P,A]) = \prod_{i=1}^s  \pr([L_i,A],[P_i,A]) \le (2/3)^s.\] 
 As $ \ep \le  \pr([L,A],[P,A])$ by hypothesis,  $s$ is $\ep$-bounded and the proof is complete. 
\end{proof}

We need the following lemma, which is  \cite[Lemma 2.5]{AGS1}. 
\begin{lemma}\label{2.5}
Let $M = H\langle a \rangle$ be a group with a normal subgroup $H$ and an element 
$a$ such that $(|H|, |a|) = 1$ and $H = [H, a]$. Suppose that $M$ faithfully acts 
by permutations on a set $\Omega$ in such a way that the element $a$ moves only $m$ 
points. Then the order of $M$ is $m$-bounded.
\end{lemma}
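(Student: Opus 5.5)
The plan is to reduce to bounding the number of points moved by $M$, since faithfulness then gives $|M|\le |\mathrm{supp}(M)|!$. Here $\mathrm{supp}(g)$ denotes the set of points moved by $g$, and $\Delta=\mathrm{supp}(a)$, so $|\Delta|\le m$. Two elementary observations come first.
\begin{enumerate}[(i)]
\item Since every generator $h^{-1}h^{a}=h^{-1}a^{-1}ha$ lies in the normal closure of $a$, we have $H=[H,a]\le\langle a^M\rangle$. Hence $M=\langle a^x : x\in M\rangle$ is generated by conjugates of $a$, each of which moves at most $m$ points.
\item Let $O$ be an $M$-orbit on which $a$ acts trivially. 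Then the conjugates $a^x$ also act trivially on $O$ (they are conjugates under elements preserving $O$). By (i), $M$ acts trivially on $O$.
\end{enumerate}
So every nontrivial $M$-orbit meets $\Delta$, and there are at most $m$ of them. It therefore suffices to bound the size of a single nontrivial orbit $O$ by a function of $m$. Replacing $M$ by its image in $\mathrm{Sym}(O)$ preserves all the hypotheses: $H$ stays normal, coprimality passes to quotients, and $[\bar H,\bar a]=\bar H$. So we may assume $M$ is transitive.

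For transitive $M$, I would argue by induction on a chain of block systems. First suppose $M$ is primitive on $O$. Then $M$ contains the nontrivial element $a$ of support at most $m$, and Jordan-type theorems on minimal degree (for instance the Babai / Liebeck–Saxl bounds) say that either $|O|$ is $m$-bounded or $M\ge \mathrm{Alt}(O)$. In the latter case $H$ is normal of index dividing $|a|$ in $M$, so $H\ge\mathrm{Alt}(O)$ once $|O|\ge 5$. But every prime divisor of $|a|$ is at most $m$, since $a$ is a product of cycles of length at most $m$. Such a prime divides $|\mathrm{Alt}(O)|$ as soon as $|O|\ge m$, which contradicts $(|H|,|a|)=1$. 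Hence $|O|$ is $m$-bounded in the primitive case.

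Now suppose $M$ is imprimitive, and take a maximal block system $\mathcal B$, so that $M$ acts primitively on $\mathcal B$. The image of $a$ on $\mathcal B$ moves at most $m$ blocks, and the image of $M$ still satisfies the hypotheses. If $a$ acts trivially on $\mathcal B$, then the argument of (ii) shows $M$ fixes every block, contradicting transitivity with more than one block. So, by the primitive case, $|\mathcal B|$ is $m$-bounded. It remains to bound the block size. Take the kernel $K$ of the action on $\mathcal B$. Then $H_0=H\cap K$ and the subgroup $K_0=\langle a^{x}\rangle\cap K$ act on each block, and a block $B$ is permuted by its stabiliser $M_B$. The natural move is to apply induction to $M_B$ acting on $B$, with $a$ replaced by a suitable power or conjugate fixing $B$, and with $H$ replaced by $[H\cap M_B,a^{k}]$.

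This last step is the main obstacle. The trouble is that $M_B$ need not be of the form $H_1\langle a_1\rangle$ with $H_1=[H_1,a_1]$, and $a$ may fix no block at all. I would first try to use coprime action to produce an $a$-invariant block. Since $\langle a\rangle$ acts coprimely on $H$, Glauberman's lemma gives a point, and hence a block, in each $a$-invariant $H$-orbit that is fixed by $a$; replacing $a$ by a conjugate under $C_H(a)$ lets $a$ normalise $H_B$. Then set $H_1=[H_B,a]$. Finally I would argue that $H_B/H_1$ is controlled, because $a$ centralises it, so its action on $B$ is glued to that of $H_1$ and bounded via the bounded number of blocks. The bounds then multiply along a chain of block systems of $m$-bounded length, since each step at least halves the block size and the primitive case gives $m$-bounded degrees.
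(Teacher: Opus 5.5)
\textbf{Gap: the block size is never bounded.} Your reductions (i) and (ii), the passage to a single nontrivial orbit, and the primitive case are correct. In the primitive case, the minimal-degree theorem combined with coprimality does exclude $\mathrm{Alt}(O)$. The gap is in your last paragraph.

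The Glauberman/induction plan does not work as stated. Applying induction to $[H_B,a]\langle a\rangle$ acting on $B$ only bounds the set of points of $B$ moved by that subgroup, not $|B|$ itself. The factor $C_{H_B}(a)$ in $H_B=[H_B,a]C_{H_B}(a)$ is not constrained by that induction at all. The sentence about $H_B/H_1$ being ``glued'' to $H_1$ and ``bounded via the bounded number of blocks'' is not an argument.

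\textbf{The obstacle is not real, and you already have what you need.} You showed, using the argument of (ii), that $a$ cannot fix every block of $\mathcal B$. So there is a block $B$ with $B^a\neq B$. Distinct blocks are disjoint, so $B\cap B^a=\emptyset$, and every point of $B$ is moved by $a$. Hence $|B|\le m$; in fact $|B|\le m/2$, since the points of $B^a$ are moved as well.

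All blocks have the same size, so
\[
|O|=|B|\,|\mathcal B|\le m\,f(m),
\]
where $f(m)$ is the bound from your primitive case applied to $M$ acting on $\mathcal B$. No Glauberman lemma, no induction on block stabilisers and no chain of block systems is needed. Combined with your first reduction (at most $m$ nontrivial orbits, and faithfulness), this gives $|M|\le (m^2 f(m))!$.

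The paper does not prove this lemma; it quotes it from \cite{AGS1}. With this one-line fix, your argument is a complete proof.
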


If $G$ is a finite group, let $R_p(G)$ be the $p$-soluble radical of $G$. Then $U(G)/R_p(G)=F^*(G/R_p(G))$ is a direct product of  simple groups of order divisible by $p$. Let $K_p(G)/R_p(G)$ be the kernel of the permutational action of $G$ on the simple factors of $U(G)/R_p(G)$. Since $K_p(G)/U(G)$ is isomorphic to a subgroup of a direct product of  outer automorphism groups $\out(S_i)$ of  simple groups $S_i$, and  $\out(S_i)$ is soluble by the Schreier conjecture, we deduce 
 that $K_p(G)/U(G)$ is soluble. Therefore 
\[\lambda_p(K_p(G))\le 1. \]

\begin{proof}[Proof of Theorem \ref{main44}] Recall that $G$ is a  profinite group admitting a finite group of coprime automorphisms $A$ such that there is an $A$-invariant Sylow $2$-subgroup $L$ and an $A$-invariant Sylow $p$-subgroup $P$, where $p$ is an odd prime, for which $\pr([L,A],[P,A])=\ep>0$. 
We want to prove that $[G,A]$ has an open normal subgroup of non-$p$-soluble length at most $1$. 

It is sufficient to prove that there exists an $\epsilon$-bounded integer $n$ such that, for every open normal $A$-invariant subgroup $N$   of $G,$ the subgroup $K_p(G/N)$ has index at most $n$ in $G/N$. 

Thus, we now assume that $G$ is finite. Furthermore, without loss of generality, we assume that $R_p(G)=1$ and $F^*(G)$ is a direct product of simple groups 
    \[F^*(G)=S_1\times\dots\times S_t\] of order divisible by $p$.

Let $K=K_p(G)$, and let $B$ be the kernel of the permutational action of $A$ on the set $\{S_1,\dots,S_t \}$. Set $\bar A=A/B$ and $\bar G=G/K$. We will use the bar notation in $\bar A$ and $\bar G$. Now it is sufficient to prove that $[\bar G,\bar A]$ has $\epsilon$-bounded order. 
  
By Lemma \ref{non-transitive} applied to $F^*(G)$, there is an $\epsilon$-bounded integer $m$ such that the number of factors  in the set $\Omega=\{ S_1, \dots , S_t \}$ moved by $\bar A$  is at most $m$. In particular the order of $\bar A$ is at most $m!$.  

Let $\al \in A$; as $\al$ moves at most $m$ factors, it follows from Lemma \ref{2.5} applied to $[\bar G,\bar \al]\langle \bar \al \rangle$ that the there exists an integer $n,$ depending only on $m$, such that order of $[\bar G,\bar \al]\langle \bar\al \rangle$ is at most $n$. Note that $n$ is $\epsilon$-bounded.
 
Therefore
\[ \bar G=[\bar G,\bar A]= \prod_{\bar\al \in \bar A} [\bar G,\bar\al],\] 
 being a product of at most $m!$ normal subgroups of order at most $n$, has $\epsilon$-bounded order. As $G/K_p(G)$ is isomorphic to $\bar G$, this concludes the proof.
\end{proof}

\section{Theorem \ref{main5}}\label{last} 

We will use the well-known fact that if $G$ is a profinite group,  $H$ a  subgroup of $G$, and $H_0$ is an open subgroup of $H$, then there exists an open normal subgroup $M$ of $G$ such that $M\cap H\le H_0.$ Moreover, if $G$ admits a finite group of automorphisms $A$, we may assume that $M$ is $A$-invariant, by replacing it with $\cap_{\alpha\in A}M^{\alpha}$, which is open as well because $A$ is finite. 

\begin{lemma}\label{open} Let $G$ be a profinite group admitting a coprime automorphism $\alpha$. 
Let $P$ be an $\alpha$-invariant Sylow $p$-subgroup of $G$ and $H$  an open $\al$-invariant normal subgroup of $G$ such that $[P\cap H,\al] $ is finite, then  $[P,\alpha]$ is finite. 
\end{lemma}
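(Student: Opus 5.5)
The plan is to show that $\alpha$ centralizes an open normal subgroup of $P$, and then use a Schur-type argument for $[P,\alpha]$.

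Set $P_0=P\cap H$, an open normal $\alpha$-invariant subgroup of $P$, and $D=[P_0,\alpha]$, which is finite by hypothesis. Since $P$ is profinite and $D$ is finite, there is an open normal subgroup of $P$ meeting $D$ trivially. Intersecting it with its finitely many $\alpha$-images and with $P_0$ gives an open normal $\alpha$-invariant subgroup $M\le P_0$ with $M\cap D=1$.

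\textbf{Step 1: $\alpha$ centralizes $M$.} For $x\in M$ we have $[x,\alpha]=x^{-1}x^{\alpha}\in M$, because $M$ is $\alpha$-invariant. Also $[x,\alpha]\in D$, because $x\in P_0$. Hence $[x,\alpha]\in M\cap D=1$, so $[M,\alpha]=1$.

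\textbf{Step 2: $[P,\alpha]$ centralizes $M$.} Apply Lemma \ref{coprime}(v) to the finite quotients $P/N$, where $N$ runs over open normal $\alpha$-invariant subgroups of $P$. This is legitimate because $\alpha$ is coprime. Passing to the limit, $[P,\alpha]$ centralizes $M$. Put $C=[P,\alpha]\cap M$. Then $C$ is central in $[P,\alpha]$, and $[P,\alpha]/C$ is finite because it embeds in $P/M$. So $[P,\alpha]$ is a profinite centre-by-finite group.

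\textbf{Step 3: $[P,\alpha]'$ is finite.} By Schur's theorem, applied to the finite continuous images and using that the bound depends only on the index of the centre, the derived subgroup $[P,\alpha]'$ is finite.

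\textbf{Step 4: the abelianization is finite.} Let $B=[P,\alpha]/[P,\alpha]'$; this is abelian and $\alpha$ acts on it coprimely. From Lemma \ref{coprime}(i), in its profinite form, $[P,\alpha]=[P,\alpha,\alpha]$, so $B=[B,\alpha]$. From Lemma \ref{coprime}(ii), $B=[B,\alpha]\times C_B(\alpha)$, which forces $C_B(\alpha)=1$. The image of $C$ in $B$ is centralized by $\alpha$, since $C\le M$, and is therefore trivial. Hence $B$ is a quotient of $[P,\alpha]/C$, so $B$ is finite.

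Together with Step 3, this shows $[P,\alpha]$ is finite.

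The main technical point is transferring the finite-group facts, namely Lemma \ref{coprime}(i), (ii), (v) and Schur's theorem, to the profinite setting. Each of these passes through inverse limits over open normal $\alpha$-invariant subgroups because $\alpha$ is coprime and the relevant bounds are uniform. I expect this to be routine but to need care.
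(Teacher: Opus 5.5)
Your proof is correct and follows essentially the same route as the paper. Both arguments take an open $\alpha$-invariant $M$ meeting $[P\cap H,\alpha]$ trivially, so that $\alpha$ centralizes $P\cap M$. Both then use Lemma \ref{coprime}(v) to make $[P,\alpha]$ centre-by-finite and apply Schur's theorem. Finally, both use the coprime decomposition of the abelianization to force $[P,\alpha]\cap M\le[P,\alpha]'$.

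The differences are only cosmetic. You take $M$ normal in $P$ rather than in $G$, and you apply (v) inside $P$ rather than inside $[P,\alpha]$. You also state explicitly the identity $[P,\alpha]=[P,\alpha,\alpha]$, which the paper uses implicitly.
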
 
\begin{proof}
 Let $M$ be an $\alpha$-invariant open normal subgroup of $G$ such that $[(P\cap H),\alpha]\cap M=1$. By replacing $M$ with $M\cap H$, we may assume that $M\le H$, so that $[(P\cap M),\alpha]=1$.  Now $[P,\alpha]\cap M$ is a normal subgroup of $[P,\alpha]$ centralizing $\alpha,$ so by Lemma \ref{coprime} (v) it is central in $[P,\alpha].$
Therefore the center of $[P,\alpha]$ has finite index in $[P,\alpha]$ and it follows from  Schur's Theorem \cite[4.12]{Rob} that $[P,\alpha]'$ is finite.

Considering the natural action of $\alpha$ on the abelian group $[P,\alpha]/[P,\alpha]'$, we deduce from Lemma \ref{coprime} (ii) and (iii) that $C_{[P,\alpha]}(\alpha)\le [P,\alpha]'$, which is finite. 

Hence, $[P,\alpha]\cap M=C_{[P,\alpha]\cap M}(\alpha)$ is finite. It follows that  $[P,\alpha]$  is finite as well. This completes the proof.
\end{proof}

\begin{lemma}\label{KpP} Let $G$ be a profinite group admitting a coprime automorphism $\alpha$. Let $P$ be an $\alpha$-invariant Sylow $p$-subgroup of $G$ such that $[P,\alpha]$ is abelian. Let $M$ be an $\alpha$-invariant normal subgroup of $G$ which is a Cartesian product of simple groups, each of order divisible by $p$. Then $[P,\alpha]$ normalizes every simple factor of $M$.
\end{lemma}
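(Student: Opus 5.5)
The plan is to let $[P,\alpha]$ act by conjugation on the set of simple factors of $M$ and to show that any element moving a factor would force some nontrivial $p$-element of that factor to have order at most $2$. This contradicts $p$ odd.

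\textbf{Setup.} Write $M=\prod_{i\in I}S_i$. Since the $S_i$ are nonabelian simple, they are exactly the minimal (closed) normal subgroups of $M$. Hence $G$, which normalizes $M$, permutes the $S_i$ by conjugation, and so does $[P,\alpha]\le G$.

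Next, $P\cap M$ is a Sylow $p$-subgroup of $M$. The product $\prod_i \pi_i(P\cap M)$ of its projections is a pro-$p$ subgroup of $M$ containing $P\cap M$, so by maximality $P\cap M=\prod_i (P\cap S_i)$. In particular each $P\cap S_i=\pi_i(P\cap M)$ is a Sylow $p$-subgroup of $S_i$, and it is nontrivial because $p$ divides $|S_i|$.

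Finally, $[P,\alpha]$ is normal in $P$, by the identity $[u,\alpha]^v=[uv,\alpha][v,\alpha]^{-1}$ for $u,v\in P$. Therefore $[y,x]=y^{-1}y^x\in[P,\alpha]$ whenever $y\in P$ and $x\in[P,\alpha]$.

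\textbf{Main argument.} Suppose that $x\in[P,\alpha]$ and $S=S_i$ satisfy $S^x\neq S$, and let $k\ge 2$ be the length of the $\langle x\rangle$-orbit of $S$. Pick $1\neq y\in P\cap S$, so that $z=[y,x]=y^{-1}y^x$. This element lies in $[P,\alpha]$, which is abelian, so $z$ commutes with $x$, that is, $z^x=z$.

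We compare components in the Cartesian product. The element $z$ has component $y^{-1}$ in $S$, component $y^x$ in $S^x$, and trivial components elsewhere. Its conjugate $z^x=y^{-x}y^{x^2}$ has component $y^{-x}$ in $S^x$ and component $y^{x^2}$ in $S^{x^2}$.
\begin{itemize}
\item If $k\ge 3$, then $S^{x^2}\neq S$, so $z^x$ has trivial $S$-component. Then $z^x=z$ gives $y^{-1}=1$, a contradiction.
\item If $k=2$, then $S^{x^2}=S$. Equating the $S^x$-components gives $y^x=y^{-x}$. Hence $(y^x)^2=1$, so $y^2=1$. Since $y$ is a $p$-element with $p$ odd, this forces $y=1$, again a contradiction.
\end{itemize}
So every element of $[P,\alpha]$ normalizes every $S_i$, as claimed.

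\textbf{Expected obstacle.} The only point needing care is the bookkeeping for infinite Cartesian products: that the simple factors are canonical and permuted by $G$, and that the Sylow subgroup of $M$ splits as the product of its intersections with the factors. Both are handled componentwise as in the setup above.
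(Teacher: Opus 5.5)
Your argument is correct for odd $p$, and it is more elementary than the paper's. It uses only that $[P,\alpha]$ is an abelian normal subgroup of $P$, with no appeal to the Odd Order Theorem or to coprime action. The gap is that the lemma does not assume $p$ odd, and the paper needs it for $p=2$ as well. It feeds Corollary~\ref{Kp}, Proposition~\ref{main4} and Theorem~\ref{main5}, all stated for an arbitrary prime; Example~\ref{remark} takes $p=2$, and Lemma~\ref{Oppp} invokes Hartley precisely for $p\in\{2,3\}$. For $p=2$ your case $k=2$ yields only $y^2=1$ and $y^{x^2}=y$, which is no contradiction. Nor can this be repaired while ignoring $\alpha$. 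Take $G=S\wr C_2=(S\times S)\rtimes\langle\tau\rangle$ with $S=A_5$ and $T$ a Sylow $2$-subgroup of $S$ (a Klein four-group). Then $\{(w,w):w\in T\}\times\langle\tau\rangle$ is an abelian normal subgroup of the Sylow $2$-subgroup $(T\times T)\rtimes\langle\tau\rangle$, since $(u,v)^{-1}\tau(u,v)=\tau(uv,uv)$. Yet $\tau$ swaps the two factors.

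So for $p=2$ some property of $\alpha$ is indispensable, and this is what the paper uses. Since $M\neq 1$, $|G|$ is even, so $|\alpha|$ is odd by the Odd Order Theorem. Hence if $S_1^\alpha=S_2\neq S_1$, then $S_3=S_1^{\alpha^{-1}}$ differs from both $S_1$ and $S_2$. For $1\neq y\in P\cap S_1$, the elements $y^{-1}y^{\alpha}$ and $y^{-1}y^{\alpha^{-1}}$ lie in $[P,\alpha]=[P,\alpha^{-1}]$. They have supports exactly $\{S_1,S_2\}$ and $\{S_1,S_3\}$, and they are centralized by the abelian group $[P,\alpha]$. Thus $[P,\alpha]$ stabilizes both pairs and therefore fixes $S_1$.

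If instead $S_1$ is $\alpha$-invariant, then by the previous step every factor $S_j$ in its $[P,\alpha]$-orbit is $\alpha$-invariant. If $[P\cap S_j,\alpha]\neq 1$ for one of them, a nontrivial element of $S_j$ is centralized by $[P,\alpha]$, so the orbit is trivial. Otherwise $\alpha$ centralizes $P_0=\prod_j (P\cap S_j)$, which is normalized by $[P,\alpha]$. Lemma~\ref{coprime}(v) applied to $P_0[P,\alpha]$ then shows that $[P,\alpha]$ centralizes $P\cap S_1\neq 1$, so again it fixes $S_1$. You need to add an argument of this kind to cover $p=2$.
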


\begin{proof} The group $G\langle\alpha\rangle$ acts on $M$ by permuting the simple factors. 
As $G$ is not soluble and $\al$ is coprime, in view of the Odd Order Theorem \cite{fetho}  the order of $\al$ is odd.

Suppose that $S_1$ is a simple factor of $M$ that is not $\alpha$-invariant. Then we may assume that $S_1^\alpha=S_2$ for some factor $S_2$ of $M$, with $S_2\ne S_1$. Note that if $S_1^{\alpha^{-1}}=S_2$ then $\alpha^2$ normalizes $S_1$ while $\alpha$ doesn't, contradicting the fact that the order of $\alpha$ is odd. Therefore we may assume that $S_1^{\alpha^{-1}}=S_3$.  If $x\in P\cap S_1$, then $[x,\alpha]=x^{-1}x^\alpha\in S_1\times S_2$. As $[P,\alpha]$ is abelian, it centralizes $x^{-1}x^\alpha$, therefore it normalizes $S_1\times S_2$. If $[P,\alpha]$ does not normalize $S_1$, then $\{S_1,S_2\}$ is an orbit under the action of $[P,\alpha]$. As $[P,\alpha]=[P,\alpha^{-1}]$, the same argument shows that $\{S_1,S_3\}$ is an orbit under the action of $[P,\alpha]$, contradicting the fact that $S_2\ne S_3$. This shows that $[P,\alpha]$ normalizes $S_1$ whenever $S_1$ is not $\alpha$-invariant. 

Now assume that $S_1^{[P,\alpha]}=\prod_{j\in J}S_j$, where each $S_j$ is  $\alpha$-invariant. For each $j\in J$, let $P_j=S_j\cap P$ and let $P_0=\prod_{j\in J}P_j$. If $[P_j, \alpha]\ne 1$, then $[P,\alpha]$ centralizes a nontrivial element of the form  $x^{-1}x^\alpha,$ with $x\in P_j$ and it normalizes $S_j$. Therefore we may assume that $[P_j,\alpha]=1$ for every $i\in J$.  
Then $\alpha$ centralizes $P_0$. Note that $P_0$ is a normal subgroup of $P_0[P,\alpha]$ centralized by $\al$. 
By Lemma \ref{coprime} (v), we deduce that  $[P,\alpha]$ centralizes $P_0$. 
In particular  $[P,\alpha]$ centralizes $P_1$ and therefore it normalizes $S_1$, so that $|J|=1$. This concludes the proof. 
\end{proof}

\begin{corollary}\label{Kp} Let $G$ be a finite group admitting a coprime automorphism $\alpha$. Let $P$ be an $\alpha$-invariant Sylow $p$-subgroup of $G$ such that $[P,\alpha]$ is abelian. Then $[P,\alpha]\le K_p(G)$.
\end{corollary}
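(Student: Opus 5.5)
The plan is to reduce to the setting of Lemma \ref{KpP} by passing to $\bar G=G/R_p(G)$. This is a finite group, and $\alpha$ acts on it coprimely, since $R_p(G)$ is characteristic. The image $\bar P$ of $P$ is an $\alpha$-invariant Sylow $p$-subgroup of $\bar G$. By Lemma \ref{coprime}(iii) applied to $P$ and the normal subgroup $P\cap R_p(G)$, the image of $[P,\alpha]$ equals $[\bar P,\alpha]$, and it is abelian, being a homomorphic image of an abelian group.

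By definition $K_p(G)$ is the full preimage of the kernel of the permutation action of $\bar G$ on the simple factors of $U(G)/R_p(G)=F^*(\bar G)$. Since $R_p(G)\le K_p(G)$, it suffices to show that $[\bar P,\alpha]$ normalizes every simple factor of $F^*(\bar G)$.

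To get this, apply Lemma \ref{KpP} to $\bar G$, with $M=F^*(\bar G)$. Because $R_p(\bar G)=1$, the Fitting subgroup of $\bar G$ is trivial: a nontrivial nilpotent normal subgroup would be soluble, hence $p$-soluble. So $F^*(\bar G)$ is a product of quasisimple groups with trivial centre, that is, a direct product of nonabelian simple groups. Each of these factors has order divisible by $p$, since otherwise the product of the $p'$-factors would be a nontrivial normal $p$-soluble subgroup, as the factors are permuted by conjugation. Moreover $M$ is characteristic in $\bar G$, hence $\alpha$-invariant. Lemma \ref{KpP} then gives that $[\bar P,\alpha]$ normalizes every simple factor of $M$, and taking preimages yields $[P,\alpha]\le K_p(G)$.

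The degenerate case $F^*(\bar G)=1$, i.e. $G$ $p$-soluble, gives $K_p(G)=G$ and is trivial. The main point to check is that Lemma \ref{KpP}'s use of the Odd Order Theorem ("$G$ is not soluble") still applies. If $\bar G$ is not $p$-soluble, it is insoluble, so that argument works for $\bar G$. If $\bar G$ is $p$-soluble, we are in the trivial case, since then $R_p(G)=G$.
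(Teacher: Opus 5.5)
Your proposal is correct and is essentially the paper's own argument: pass to $G/R_p(G)$, where $F^*$ is a direct product of simple groups of order divisible by $p$ and $K_p(G)/R_p(G)$ is the kernel of the permutation action on those factors, then apply Lemma \ref{KpP}; you simply write out the routine checks the paper leaves implicit. One small correction: Lemma \ref{coprime}(iii) is not what shows that the image of $[P,\alpha]$ is $[\bar P,\alpha]$, since that follows at once from the generators $x^{-1}x^{\alpha}$.
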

\begin{proof} We can assume that $R_p(G)=1$ and 
\[F^*(G)=S_1\times\cdots\times S_t\] 
where each $S_i$ is a 
  simple group of order divisible by $p$. The group $G$ acts on $F^*(G)$ by permuting the factors $S_1,\dots, S_t$ and $K_p(G)$ is the kernel of the permutational action. The result follows from Lemma \ref{KpP}.
\end{proof}

\begin{lemma}\label{Oppp} Let $G$ be a finite $p$-soluble group admitting a coprime automorphism $\al$ and having an $\al$-invariant Sylow $p$-subgroup $P$  such that $[P,\alpha]$ is abelian. Then $[P,\alpha]\le O_{p',p,p',p,p',p}(G)$.
\end{lemma}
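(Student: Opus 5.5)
The plan is to push $X=[P,\alpha]$ up the upper $p$-series of $G$ and show that it must be absorbed after three $p$-steps. Throughout, write $O_{p',p}(G)$, $O_{p',p,p'}(G)$, and so on for the terms of that series. Since the series is characteristic, all its terms are $\alpha$-invariant, and by Lemma \ref{coprime}(iii) images of $[P,\alpha]$ in quotients are again of the form $[\bar P,\alpha]$, abelian, with $\bar P$ an $\alpha$-invariant Sylow $p$-subgroup. So the statement is inherited by quotients by $\alpha$-invariant normal subgroups. We may therefore factor out $O_{p'}(G)$ at will and argue by induction on $|G|$ with the following reduction. It suffices to prove that, for a $p$-soluble $H$ with $O_{p'}(H)=1$ and $\alpha$, $P$ as in the statement, the image of $[P,\alpha]$ in $H/O_{p,p',p,p',p}(H)$ is trivial. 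Equivalently, we must show $[P,\alpha]\le O_{p,p',p,p',p}(H)$ when $O_{p'}(H)=1$.

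\textbf{Step 1.} Put $Q=O_p(H)$, so that $C_H(Q)\le Q$ by the Hall--Higman lemma. The kernel of the action of $H$ on $V=Q/\Phi(Q)$ is a $p$-group containing $Q$, hence equals $Q$. Therefore $\bar H=H/Q$ acts faithfully on $V$, $O_p(\bar H)=1$, and $R=O_{p'}(\bar H)$ satisfies $C_{\bar H}(R)\le R$. Let $Y$ be the image of $X$ in $\bar H$; it is an abelian $p$-group with $Y=[Y,\alpha]$. The goal is to show that $Y$ lies in $O_{p',p,p',p}(\bar H)$.

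\textbf{Step 2.} Consider the action of the abelian group $Y\langle\alpha\rangle$-orbit structure on the $\alpha$-invariant $p'$-group $R$. Since $Y$ is abelian, every element $y^{-1}y^{\alpha}$ of $X$ commutes with $Y$. Use the $P\times Q$-lemma and coprime action (Lemma \ref{coprime}) to show that $[R,Y,Y]$ is small in the following sense. Take an $\alpha$-invariant Sylow $p$-subgroup $\bar P$ of $\bar H$ and a $\bar P\langle\alpha\rangle$-invariant Hall-type decomposition $R=[R,Y]C_R(Y)$. Then argue that $Y$ centralizes $R$ modulo $O_{p',p}$-layers in two more steps. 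Concretely, in $\bar H/R$ the image of $Y$ must normalize and act on $O_p(\bar H/R)$. The abelian hypothesis gives that $Y\cap O_{p',p}(\bar H)$ has abelian normal closure in a suitable section, and after two further quotients $Y$ acts trivially on the relevant $p'$-section. The fact $C(O_{p'})\le O_{p'}$ then forces $Y$ into the next $p$-term.

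\textbf{Step 3.} Assemble the layers: one $p$-layer comes from $Q$ in Step 1, then the $p',p,p',p$ part from Step 2, giving $O_{p',p,p',p,p',p}(G)$ after restoring $O_{p'}(G)$.

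I expect Step 2 to be the main obstacle, namely proving that an abelian subgroup of the form $[\bar P,\alpha]$ cannot climb more than two further $p$-steps. The first thing I would try is to work in the faithful module $V$ for $\bar H$ and apply a Hall--Higman Theorem B type argument to elements $y^{-1}y^{\alpha}$. The aim would be to show that the minimal polynomial of such elements forces $[V,Y,Y]$-type relations, which translate into the bound on $p$-length of $\langle Y^{\bar H}\rangle$. If that fails, I would fall back on the known bound that $p$-soluble groups with abelian Sylow $p$-subgroups have $p$-length $1$, applied to the subgroup $\langle Y^{\bar H}\rangle$ modulo a suitable $O_{p',p}$-term.
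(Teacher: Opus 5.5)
Your Step~1 is a correct reduction, but Step~2 is where all of the lemma's content lies, and it is not an argument. You assert three things with no mechanism behind them: that $[R,Y,Y]$ is ``small'', that $Y\cap O_{p',p}(\bar H)$ has abelian normal closure ``in a suitable section'', and that $Y$ acts trivially ``after two further quotients''. Nothing explains why $Y$ is absorbed after exactly two more $p$-steps. Your fallback also fails. $\langle Y^{\bar H}\rangle$ is generated by conjugates of $Y$ that need not commute with one another, so its Sylow $p$-subgroups need not be abelian. Hence the $p$-length-one theorem for abelian Sylow subgroups does not apply to it.

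The missing observation is that $X=[P,\alpha]$ is \emph{normal} in $P$, because $P$ is $\alpha$-invariant. Apart from this, $\alpha$ plays no role, and your emphasis on $Y=[Y,\alpha]$ and on the elements $y^{-1}y^{\alpha}$ is beside the point. Abelianness alone is useless: in a $p$-soluble group of large $p$-length, some cyclic subgroup of $P$ lies outside $O_{p',p,p',p,p',p}(G)$.

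Normality is what makes your Theorem~B idea work. Since $O_p(H)\le P$, we get $[O_p(H),X]\le X$, hence $[O_p(H),X,X]\le [X,X]=1$. So every element of $X$ acts quadratically on the faithful $\bar H$-module $V$ of your Step~1.

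For $p\ge 5$, Hall--Higman's Theorem~B forbids this when $O_p(\bar H)=1$. Even in the Fermat-exceptional case, the minimal polynomial of a nontrivial $p$-element has degree at least $p-1\ge 4$. Hence $X\le O_{p',p}(G)$, which is the result of \cite{HH} (Theorem~3.2.1) that the paper quotes.

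For $p=2,3$, quadratic action genuinely occurs. Take $G=\mathbb{F}_3^2\rtimes \mathrm{SL}(2,3)$, let $x$ be a transvection, and let $v\ne 0$ be a vector fixed by $x$. Then $\langle v,x\rangle$ is an abelian normal subgroup of the Sylow $3$-subgroup $\mathbb{F}_3^2\langle x\rangle$, but it is not contained in $O_{3',3}(G)=\mathbb{F}_3^2$. Controlling how far $X$ climbs in these cases requires Hartley's Theorem~2 in \cite{Har}, a substantially harder result.

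The paper's proof consists exactly of noting $[P,\alpha]\trianglelefteq P$ and citing these two theorems. Unless you cite them or reproduce their proofs, your proposal leaves the lemma unproved.
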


\begin{proof} The subgroup $[P,\alpha]$ is a normal abelian subgroup of $P$. It was proved by Hall and Higman in  Theorem 3.2.1 of \cite{HH} that any abelian normal sugroup $A$ of a Sylow $p$-subgroup of a finite $p$-soluble group $G$ is contained in $O_{p',p}(G)$ if $p\ge 5$. Hartley's Theorem 2 in \cite{Har} deals with the situation where $p\in\{2,3\}$, stating that in those cases $A\le O_{p',p,p',p,p',p}(G)$. The lemma follows.
\end{proof}

\begin{lemma}\label{Op_finite} Let $G$ be a profinite group such that $O_p(G)=1$ and let $N$ be a finite normal subgroup of $G$. Then $O_p(G/N)$ is finite. \end{lemma}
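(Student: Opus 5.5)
We want to show that if $O_p(G)=1$ and $N$ is a finite normal subgroup of $G$, then $O_p(G/N)$ is finite. The natural approach is to pull $O_p(G/N)$ back to $G$ and use the centralizer of the finite group $N$. Let $M/N=O_p(G/N)$, so $M$ is a closed normal subgroup of $G$ with $M/N$ a pro-$p$ group. Since $N$ is finite, $C=C_G(N)$ is an open normal subgroup of $G$, being the kernel of the conjugation action of $G$ on the finite set $N$. It then suffices to show that $M\cap C$ is finite, because $M/(M\cap C)$ embeds in $G/C$, which is finite.

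Set $D=M\cap C$, a closed normal subgroup of $G$. Then $D\cap N\le Z(D)$, because $D$ centralizes $N$; in particular $D\cap N$ is a finite central subgroup of $D$. Also $D/(D\cap N)\cong DN/N\le M/N$ is pro-$p$. So $D$ is a central extension of a finite group by a pro-$p$ group. Write $Z_0=D\cap N=Z_{p}\times Z_{p'}$, splitting this finite abelian group into its $p$-part and $p'$-part.

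Next I would split off the $p'$-part. Since $Z_{p'}$ is a central $p'$-subgroup and $D/Z_{p'}$ is pro-$p$ (an extension of $Z_p$ by a pro-$p$ group), a profinite Schur--Zassenhaus argument gives $D=Z_{p'}\times D_p$, where $D_p$ is the unique Sylow $p$-subgroup of $D$. Concretely, $D$ is pronilpotent: it is an inverse limit of finite groups in which a central subgroup has $p$-group quotient, and such finite groups are nilpotent. So $D_p$ is characteristic in $D$, hence normal in $G$, and it is a pro-$p$ group. Therefore $D_p\le O_p(G)=1$, which gives $D=Z_{p'}$, a finite group. Consequently $M$ is finite, and $O_p(G/N)=M/N$ is finite.

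The main point needing care is the pronilpotency of $D$, that is, the claim that a profinite group with a finite central subgroup and pro-$p$ quotient is the direct product of its Sylow subgroups. This is handled by passing to finite quotients $D/(D\cap U)$ for $U$ open normal in $G$. Each such quotient is a central extension of the image of $Z_0$ by a $p$-group, hence nilpotent, so it is the direct product of its Sylow subgroups. Taking inverse limits preserves this decomposition, so $D$ is pronilpotent. The rest of the argument is routine.
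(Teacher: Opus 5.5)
Your proof is correct and is essentially the paper's argument: the paper takes a Sylow $p$-subgroup $P$ of the preimage $T$ of $O_p(G/N)$ and notes $C_P(N)\le O_p(T)\le O_p(G)=1$, so $P$ embeds in $\aut(N)$ and $T=PN$ is finite, whereas you work with the whole preimage $M$ and bound $|M:M\cap C_G(N)|\le|G:C_G(N)|$. Your pronilpotency argument for the central-by-pro-$p$ group $M\cap C_G(N)$ is exactly the justification that the paper leaves implicit in the inclusion $C_P(N)\le O_p(T)$.
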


\begin{proof} Let $T/N=O_p(G/N)$ and let $P$ be a Sylow $p$-subgroup of $T$. Note that $T=PN$. We have $C_P(N)\le O_p(T)\le O_p(G)=1,$ therefore $P$ acts faithfully on $N$, which is finite. It follows that $P$ is finite, and so is $T$.
\end{proof}

The following proposition is of independent interest. It provides a crucial step in the proof of Theorem \ref{main5}.

\begin{proposition}\label{main4}
Let $G$ be a profinite group admitting a finite group of coprime automorphisms $A$. Assume that $P$ is an $A$-invariant Sylow $p$-subgroup of $G$ such that $\pr([P,A],[P,A]^x)>0$ for every $x\in G$. Then $[P,A]O_p(G)/O_p(G)$ is finite.
 \end{proposition}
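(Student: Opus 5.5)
Since $[P,A]$ is generated by the subgroups $[P,\alpha]$ with $\alpha\in A$, and $A$ is finite, I will reduce to a single coprime automorphism $\alpha\in A$ and prove that $[P,\alpha]O_p(G)/O_p(G)$ is finite. Each $[P,\alpha]$ is a subgroup of $[P,A]$, so by Lemma \ref{sub} the hypothesis $\pr([P,\alpha],[P,\alpha]^x)>0$ holds for every $x\in G$. It also passes to quotients by $A$-invariant normal subgroups, by Lemma \ref{quot}. Passing to $G/O_p(G)$, I may assume $O_p(G)=1$ and must show that $[P,\alpha]$ is finite.

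\textbf{Step 1: a finite-index abelian piece.} Take $x=1$, so $\pr([P,\alpha],[P,\alpha])>0$. By the known structure theorem for profinite groups with positive commuting probability (Lévai–Pyber), $[P,\alpha]$ has an open abelian subgroup. I would like an $\alpha$-invariant subgroup of the form $[P\cap H,\alpha]$ that is abelian, with $H$ open, $\alpha$-invariant and normal in $G$. Choose $M$ open, normal and $\alpha$-invariant with $M\cap[P,\alpha]$ inside the abelian open subgroup. Then $[P\cap M,\alpha]\le [P,\alpha]\cap M$ is abelian. By Lemma \ref{open}, it suffices to show that $[P\cap H,\alpha]$ is finite for some such open $H$. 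So I may replace $G$ by $H$ and assume that $[P,\alpha]$ is abelian. The catch is that $O_p(H)\le O_p(G)=1$ is still trivial, since $H$ is normal, so the reduction is harmless.

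\textbf{Step 2: locating $[P,\alpha]$ in a bounded series.} In every finite $\alpha$-invariant quotient $\bar G=G/N$, Corollary \ref{Kp} gives $[\bar P,\alpha]\le K_p(\bar G)$. We know $\lambda_p(K_p)\le1$, and $K_p$ has a series $R_p\le U\le K_p$ with $K_p/U$ soluble and $U/R_p$ a product of simple groups. Applying Lemma \ref{Oppp} inside the $p$-soluble pieces $R_p(\bar G)$ and $K_p(\bar G)/U(\bar G)$ puts $[\bar P,\alpha]$ into a normal series of bounded length. Each factor of this series is either a $p'$-group, a $p$-group, or a product of simple groups. Passing to the inverse limit via Wilson's results, $G$ acquires a normal $\alpha$-invariant series of bounded length containing $[P,\alpha]$ with factors of these three kinds.

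Now I run an induction from the bottom, using the full hypothesis over all conjugates $x$. At a layer that is a Cartesian product $\prod S_i$ of simple groups of order divisible by $p$, the conjugates $[P,\alpha]^x$ can be chosen to meet factors in "independent" Sylow subgroups. For each $S_i$ that $[P,\alpha]$ meets nontrivially, pick $x$ so that the two intersections do not commute; this is possible since Sylow $p$-subgroups of simple groups are not normal. This forces $\pr\le c^{k}$, where $k$ is the number of factors met nontrivially, so only finitely many factors are involved. At $p$-layers I use $O_p=1$ together with Lemma \ref{Op_finite} to pass finiteness upward: after quotienting by the finite part already obtained, the relevant $O_p$ of the quotient is finite. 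At $p'$-layers, $P$ acts on the $p'$-factor, and the condition $\pr([P,\alpha],[P,\alpha]^x)>0$ for $x$ in the layer should force the conjugation action of $[P,\alpha]$ on that layer to have finite image. If $y\in[P,\alpha]$ acts nontrivially, then $[y,x]\neq1$ for many $x$, and a Theorem B-style argument from \cite{DGMS1} makes $C_{[P,\alpha]}$ of the layer open in $[P,\alpha]$.

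\textbf{Main obstacle.} The hardest step is the $p'$-layer argument: turning positivity of $\pr([P,\alpha],[P,\alpha]^x)$ for all $x$ into finiteness of the image of $[P,\alpha]$. I would try to mimic the proof of Theorem B, using a Baire category argument over $x$ to get a uniform lower bound $\eta$ on an open set of $x$. The resulting finiteness would then be combined with Lemma \ref{Op_finite} and Lemma \ref{open} to climb the bounded-length series and conclude that $[P,\alpha]$ is finite.
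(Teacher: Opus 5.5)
You match the paper on the reduction to a single $\alpha$, on passing to $O_p(G)=1$, and on Step~1 (L\'evai--Pyber plus Lemma~\ref{open} to make $[P,\alpha]$ abelian). Step~2, however, does not go through as described.

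\textbf{The series is never built.} Your bounded-length series is never actually produced. $[\bar P,\alpha]$ need not lie in $R_p(\bar G)$. So Lemma~\ref{Oppp} applied to $R_p(\bar G)$ only places $[\bar P,\alpha]\cap R_p(\bar G)$ inside $W=O_{p',p,p',p,p',p}(R_p(\bar G))$. The factor $R_p(\bar G)/W$ remains, with unbounded $p$-length, and $[\bar P,\alpha]$ still acts on it. The paper avoids this by working top-down:
\begin{enumerate}
\item it replaces $G$ by the normal closure of $[P,\alpha]$, so $\lambda_p\le 1$ by Corollary~\ref{Kp};
\item it proves $[P,\alpha]$ is finite modulo $R_p(G)$;
\item it uses Lemma~\ref{open} to pass to an open subgroup in which $[P,\alpha]\le R_p(G)$, and replaces $G$ by $R_p(G)$;
\item only then does it apply Hall--Higman/Hartley to get finite $p$-length.
\end{enumerate}

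\textbf{The bottom-up climb fails.} It breaks at the first $p'$-layer $L_1$. There you only know that $C_{[P,\alpha]}(L_1)$ is open. Since $L_1$ is infinite, Lemma~\ref{Op_finite} does not apply, and $O_p(G/L_1)$ can be infinite (think of $L_1\rtimes Q$ with $Q$ infinite pro-$p$ acting faithfully). So induction on $G/L_1$ gives nothing. The paper needs no climbing here. With finite $p$-length and $O_p(G)=1$, the subgroup $C_G(O_{p'}(G))$ is pro-$p'$, so $O_p(U[P,\alpha])=1$ for $U=O_{p'}(G)$. Theorem~B applied directly to $U[P,\alpha]$, where $[P,\alpha]$ is a Sylow $p$-subgroup, then makes $[P,\alpha]$ finite. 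No re-proof of Theorem~B is required.

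\textbf{The simple layer is the most serious gap.} You can count the factors $S_i$ with $[P,\alpha]\cap S_i\ne 1$ using one componentwise $x$. The right reason is that a nontrivial $p$-subgroup of a simple group cannot commute with all its conjugates, since its normal closure would then be abelian; non-normality of Sylow subgroups is not the point. But this count only controls those intersections. It says nothing about finiteness of $[P,\alpha]R_p(G)/R_p(G)$, for two reasons:
\begin{itemize}
\item $[P,\alpha]$ may induce outer automorphisms of $p$-power order (e.g.\ field automorphisms) on infinitely many factors;
\item $[P,\alpha]\cap M$ may be a subdirect, diagonal-type subgroup meeting no factor at all.
\end{itemize}
For such subgroups the estimate $\pr\le (3/4)^k$ is false: a diagonal copy of a finite $p$-group $Q$ has commuting probability at least $1/|Q|$ with each of its conjugates.

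This is exactly where the paper brings in its real input:
\begin{itemize}
\item modulo $R_p$ one has $C_G(M)=1$, so $[P,\alpha]$ acts faithfully on $M$;
\item Lemma~\ref{KpP} shows $[P,\alpha]$ normalizes every factor;
\item a Baire category argument gives a uniform bound $\pr([P,\alpha],[P,\alpha]^{ax})\ge 1/n$ for all $x$ in an open $M_0$;
\item splitting $M=M_0\times M_1$ with $M_1$ finite lets Lemma~\ref{Op_finite} keep $O_p(M[P,\alpha])$ finite;
\item finally, the profinite version of Theorem~1.3 of \cite{DGMS2} is applied.
\end{itemize}
Your outline has no substitute for that theorem. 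In particular, the uniform-bound (Baire) step you place at the $p'$-layer is actually needed at the simple layer.
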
 

\begin{proof}  
Fix an automorphism $\al \in A$. We first prove that the subgroup $[P,\al]O_p(G)/O_p(G)$ is finite. 

Since $ \pr([P,\alpha], [P,\alpha])=\epsilon$, for some positive $\epsilon$, a profinite version of  Neumann's theorem (see \cite{neumann} and \cite{LP}) shows that  $[P,\alpha]$ is  virtually abelian. So there exists an $\alpha$-invariant open normal subgroup $H$ of $G$ such that $[P,\alpha]\cap H$ is abelian. 
 By Lemma \ref{open}, 
 it enough to prove that the image of $[(P\cap H),\alpha]$ in $G/O_p(G)$ is finite, and thus, replacing $G$ with $H$, 
we may assume that $[P,\alpha]$ is abelian. 

Let $\bar G=G/N$ be a finite continuous image of $G$, where the subgroup $N$ is $\alpha$-invariant. As above, let $\bar R_p$ be the $p$-soluble radical of $\bar G$ and let $K_p(\bar G)$ be the kernel of the permutational action of $\bar G$ on the simple factors of $F^*(G/R_p(G))$. It follows from Lemma \ref{Kp} that $[\bar P,\alpha]\le K_p(\bar G)$, where $\lambda_p(K_p(\bar G))\le 1$.
 If  $T$ is the normal closure of $[P,\alpha]$ in $G$, then $\bar T\le  K_p(\bar G)$ and so $\lambda_p(T)\le1.$
As $[P,\alpha]\le T$, it follows that $[P\cap T,\alpha]= [P,\alpha],$ so we can replace $G$ with $T$, and we may assume that $G$ has a characteristic series
\[1\le R\le M\le G\]
where $R=R_p(G)$ is the pro-$p$-soluble radical, $M/R$ is a Cartesian product of simple groups of order divisible by $p$ and $G/M$ is pro-$p$-soluble. 

 Firstly, we want to show that $[P,\alpha]$ is finite modulo $R_p(G)$. 
 We  can assume that $R_p(G)=1$ and $M=\prod_{i\in I} S_i$, where $S_i$ are
 simple groups of order divisible by $p$. As $C_G(M)\cap M=1$, we have that $C_G(M)$ is isomorphic to its image in $G/M$, which is pro-$p$-soluble. Hence, $C_G(M)=1$. This implies that $C_{[P,\alpha]}(M)=1$ and, in particular, $O_p(M[P,\alpha])=1$. Since we want to prove that $[P,\al]$ is finite, we can work with the subgroup $M[P,\al]$. It follows from Lemma \ref{KpP} that  $[P,\al]$ normalizes each simple factor of $M$. 

Since 
\[ \pr([P,\alpha], [P,\alpha]^g) >0, \ \textrm{ for every } \ g \in M, \]
the subgroup $M$ is the union of  the sets
  \[ X_n=\{g \in M \mid \pr([P,\alpha], [P,\alpha]^g) \ge 1/n\}. \]
Note that the set $X_n$ is closed for every $n \in \N$. Indeed, 
 consider an element $a \notin X_n$; then $\pr([P,\alpha], [P,\alpha]^a) < 1/n$. It follows from Lemma \ref{limit}
 that there is an open normal subgroup $N$ of $G$ such that $ \pr ([P,\alpha]N/N, [P,\alpha]^{a}N/N)  < 1/n$. Observe that for all $x \in N \cap M$, 
   \[ \pr([P,\alpha], [P,\alpha]^{ax}) \le  \pr ([P,\alpha]N/N, [P,\alpha]^{a}N/N) < 1/n, \] 
   which shows that $ax \notin X_n$. Therefore, $M \setminus X_n$ is open and so $X_n$ is closed.

By the Baire category theorem (see \cite[p. 200]{Ke}) at least one of the sets $X_n$ has  non-empty interior. Hence, there is an open $\al$-invariant 
 normal subgroup $M_0$ of $M$ and an element $a\in M$ such that 
 \[ \pr([P,\alpha], [P,\alpha]^{ax})\ge 1/n\]
  for every $x \in M_0$. 

Since $M_0$ is open in $M$ and $M$ is a Cartesian product of simple groups, we can write $M=M_0 \times M_1$, where $M_1$ is a finite $\al$-invariant  normal subgroup of $M$. Note that $M_1$ is $[P,\alpha]$-invariant. 
 Passing to the quotient over $M_1$, we can assume that $M=M_0$  and $O_p(M[P,\alpha])$ is finite  by Lemma \ref{Op_finite}.  

Therefore  
\[ \pr([P,\alpha], [P,\alpha]^{x})\ge 1/n \]
 for every $x \in M$.
 By a profinite version of Theorem 1.3 of \cite{DGMS2}, we conclude that $[P,\al]$ is finite modulo $O_p(M [P,\al])$. Since $O_p(M[P,\alpha])$ is finite, $[P,\al]$ is finite as well.  
  
Thus, we have shown that $[P,\al]$ is finite modulo $R_p(G)$. 
  
Let $H$ be an open normal $\alpha$-invariant subgroup of $G$ with the property that $H \cap [P,\al] \le R_p(G)$. By Lemma \ref{open}, we can assume $G=H$, so that $[P, \al ] \le  R_p(G)$. 
 As  $[P\cap R_p(G),\alpha]= [P,\alpha],$   we can replace $G$ with $R_p(G)$  and so now $G$ is  pro-$p$-soluble.

It follows from Lemma \ref{Oppp} and a routine inverse limit argument that $[P,\alpha]\le O_{p',p,p',p,p',p}(G)$, so we can replace $G$ with $O_{p',p,p',p,p',p}(G)$ and assume that $G$ has finite $p$-length.

  It follows that $G$ possesses a normal series of finite length
  \[1 = L_0 \le L_1 \le\dots \le L_h=G,\]
 where every section is either a pro-$p$ or a pro-$p'$ group. Without loss of generality we may assume that $O_p(G)=1$. Now it is enough to prove that $[P,\al]$ is finite.

Let $U=O_{p'}(G)$ and observe that  $O_p(U [P,\al])=1$. 
Indeed, the subgroup $C=C_G(U)$ is normal in $G$ and $O_p(C)$ is trivial. If $C\ne1$, then $O_{p'}(C)\ne1$ and $O_{p'}(C)\leq Z(C)$. Therefore $O_{p',p}(C)$ is the direct product of its Sylow $p$-subgroup and  $O_{p'}(C)$. As $O_p(C)=1$, it follows that $O_{p',p}(C)=O_{p'}(C)$.  Since $C$ has a series with  pro-$p$ or a pro-$p'$ sections, we conclude that $C$ is a pro-$p'$ subgroup. In particular $[P,\al] \cap C=1$ and so $O_p(U [P,\al])=1$,  as claimed. 

Therefore we can assume that $G=U[P,\al]$ and $O_p(G)=1$. 

Now $[P,\al]$ is a Sylow $p$-subgroup of $G$. Hence, it follows from Theorem B in the introduction that $O_{p,p'}(G)$ is open in $G$. As $O_p(G)=1$,  we conclude that $[P,\al]$ is finite. 

Thus, we now know that $[P,\al]$ is finite modulo $O_p(G)$, for every $\al \in A$. 
 Since $A$ is finite, $[P,A]$ is a product of finitely many  subgroups $[P,\al]$, with $\al \in A$, which are all normal in $[P,A]$. Therefore, $[P,A]$ is finite modulo $O_p(G)$ as well. 
 \end{proof}

Now we prove Theorem \ref{main5}. 

\begin{proof}[Proof of Theorem \ref{main5}] Recall that $G$ is a profinite group admitting a finite group of coprime automorphisms $A$ and $P$ is an $A$-invariant Sylow $p$-subgroup of $G$ such that $\pr([P,A],[P,A]^x)>0$ for every $x\in G$. 
We want to prove that $[G,A]$ has an open normal subgroup of non-$p$-soluble length at most $1$. 

Without loss of generality assume that $G=[G,A]$.  By Proposition \ref{main4},  $[P,A]$ has finite order modulo $O_p(G)$. 
So, there exists an open $A$-invariant normal  subgroup $H$ of $G$ such that $H\cap [P,A]\le O_p(G)$. 

We will show that  $\lambda_p([H,A])\le 1$ by proving that, for every  open $A$-invariant normal subgroup $N$ of $G$  we have  $\lambda_p([HN/N,A])\le 1$. Let $\bar G=G/N$ and use the bar notation in the quotient. 

We may assume that $R_p(\bar H)=1$.
 So, 
\[ [\bar P\cap \bar H, A]\le \bar H\cap [\bar P,A]\le O_p(\bar G) \cap H\le R_p(\bar H)=1,\]
 that is, $A$ centralizes a Sylow $p$-subgroup of $\bar H$. Let $F^*(\bar H)=\prod_{i\in I} \bar S_i$, where each $\bar S_i$ is a simple group of order divisible by $p$. Let $\tilde P$ be an $A$-invariant Sylow $p$-subgroup of $F^*(\bar H)$ and note that $\tilde P \le \bar P\cap \bar H$ is centralized by $A$. 
 By the Frattini argument, we have that $\bar H=F^*(\bar H)\bar K$, where $\bar K=N_{\bar H}(\tilde P)$ is the normalizer in $\bar H$ of $\tilde P$. Moreover, $[\bar H,A]=[\bar K,A]F^*(\bar H)$.

Now $\tilde P$ is a normal subgroup of $\bar K$ centralized by $A$. Therefore, by Lemma \ref{coprime} (v), $[\bar K,A]$ centralizes $\tilde P$. In particular, $[\bar K,A]$ centralizes $\tilde P\cap\bar S_i$ for every $i$.  It follows that $[\bar K,A]$ normalizes all simple factors $\bar S_i$ and therefore $[\bar K,A]\le K_p(\bar H)$. Thus 
\[ [\bar H,A]\le [\bar K,A]F^*(\bar H)\le K_p(\bar H),\]
 which has $p$-soluble length at most $1$. This proves the claim that  $\lambda_p([H,A])\le 1$.

 Let $L=[H,A]^G$. Note that the  subgroups $[H,A]^g$, with $g\in G$, are  normal in $H$ and they all have non-$p$-soluble length at most $1$. This implies that $\lambda_p( L )\le 1$. By Lemma \ref{coprime} (v),   the image of $H$ in $G/L$ is central in $G/L=[G,A]/L$.  It follows that the centre of $G/L$ has finite index in $G/L$ and so, by Schur's Theorem,  
  $G'L/L$ is finite. 
 Let $M$ be an open normal subgroup of $G$ such that $M\cap G'\le L$. Then $M/L$ is abelian,  so $\lambda_p(M)=\lambda_p(L)=1$. This concludes the proof.
 \end{proof}

We conclude this section with an example showing that if a profinite group $G=[G,A]$ satisfies the assumptions of Theorem \ref{main5}, then $G$ need not be virtually pro-$p$-soluble and in particular $O_{p,p'}(G)$ need not be open in $G$. 
 
 \begin{example}\label{remark}
{\rm Let $S$ be a finite simple group admitting a coprime automorphism $\al$ and assume that $S$ is of Lie type in an odd characteristic. 
   Then any $\alpha$-invariant Sylow $2$-subgroup of $S$ is contained in $C_S(\alpha)$ (see Lemma \ref{lemma-simple-coprime}). 
% Let $L$ be an $\al$-invariant Sylow 2-subgroup of $S$ and observe that $L=L_\al$ (see Lemma \ref{lemma-simple-coprime}). 
  Let $G$ be an infinite Cartesian product of groups isomorphic to $S$. Then $G$ admits the coprime automorphism that acts on every simple factor of $G$ as $\al$. We denote the automorphism of $G$ by the same symbol $\al$ and choose an $\al$-invariant Sylow 2-subgroup $P$. Note that $[P,\al]=1$ and therefore $\pr([P,\al],[P,\al]^x)=1$ for every $x\in G,$ while the prosoluble radical of $G$ is trivial.}
\end{example}

\section{Acknowledgments}
Most of this work was done during a visit by P.~Shumyatsky to the Department of Mathematics of the University of Padova. He thanks the department for excellent hospitality. 

E.~Detomi and M.~Morigi are members of GNSAGA (INDAM) and were funded by  Project 2022PSTWLB (subject area: PE - Physical Sciences and
Engineering) ``Group Theory and Applications". 

R.~M.~Guralnick was partially supported by a Simons Foundation Fellowship 00019819.

P.~Shumyatsky acknowledges financial support from CNPq and \lq\lq National Group for Algebraic and Geometric Structures, and their Applications\rq\rq (GNSAGA - INDAM).


\begin{thebibliography}{10}

\bibitem{AGS1}  C.~Acciarri, R.~M.~Guralnick, P.~Shumyatsky, Coprime automorphisms of finite
groups, Trans. Amer. Math. Soc. 375 (2022), 4549--4565.

\bibitem{asch} M.~Aschbacher,  Finite group theory.  
Cambridge Studies in Advanced Mathematics, 10. Cambridge University Press, Cambridge, 2000.

\bibitem{BFMMNSST} A.~Beltr\'an, M.~J.~Felipe, G.~Malle,
A.~Moret\'o, G.~Navarro, L.~Sanus, R.~Solomon, P.~H.~Tiep, Nilpotent
and abelian Hall subgroups in finite groups. Trans. Am. Math. Soc.
368 (2016), 2497–2513.
\bibitem{Carter2}  R.~W.~Carter, Finite groups of Lie type. Conjugacy classes and complex characters. Pure and Applied Mathematics (New York). A Wiley-Interscience Publication. John Wiley \& Sons, Inc., New York, 1985.

\bibitem{DGMS1} E.~Detomi, R.~M.~Guralnick, M.~Morigi, P.~Shumyatsky,   Commuting probability for conjugate subgroups of a finite group,  	arXiv:2505.10521

 \bibitem{DGMS2} E.~Detomi, R.~M.~Guralnick, M.~Morigi, P.~Shumyatsky,      Finite groups, commuting probability, and coprime automorphisms,  	arXiv:2511.07597 

\bibitem{DLMS-finite}  E.~Detomi, A.~Lucchini, M.~Morigi, P.~Shumyatsky,  Commuting probability for the Sylow subgroups of a finite group, 	 Isr. J. Math. (2025). https://doi.org/10.1007/s11856-025-2847-6
\bibitem{MZ} E.~Detomi, M.~Morigi, P.~Shumyatsky,  Commuting probability for the Sylow subgroups of a profinite group. Math. Z. 309 (2025), no. 3, Paper No. 52, 13 pp. 

\bibitem{fetho} W.~Feit, J.~G.~Thompson, Solvability of groups of odd order, Pacific Journal of Mathematics, 13 (1963), 775--1029.
 \bibitem{GLS3} D.~Gorenstein, R.~Lyons, R.~Solomon, The classification of the finite simple groups. Number 3. Part I. Chapter A. Almost simple $K$-groups. Mathematical Surveys and Monographs, 40.3. American Mathematical Society, Providence, RI, 1998.

\bibitem{HH}  P.~Hall, G.~Higman, On the $p$-length of $p$-soluble groups and reduction
theorems for Burnside's problem, Proc. London Math. Soc. 6 (1956) 1--40.

\bibitem{Har} B.~Hartley, 
Some theorems of Hall-Higman type for small primes.
Proc. London Math. Soc.  41 (1980), 340--362.

\bibitem{HR} E.~Hewitt, K.~Ross, Abstract Harmonic Analysis. Vol. I, Die Grundlehren der mathematischen Wissenschaften, Vol. 115, Springer, Berlin-G\"{o}ttingen-Heidelberg, 1963. 

\bibitem{Hum}
J.~E.~Humphreys,  Conjugacy classes in semisimple algebraic groups. Mathematical Surveys and Monographs, 43. American Mathematical Society, Providence, RI, 1995.

\bibitem{Isaacs}  I.\,M. Isaacs, Finite Group Theory, Graduate Studies in Mathematics, 92.
American Mathematical Society, Providence, RI, 2008.
 
\bibitem{Ke} J.~L.~Kelley. General topology. Toronto - New York - London: Van Nostrand, 1955.

 \bibitem{KhSh15} E.~Khukhro,  P.~Shumyatsky,  {Nonsoluble and non-p-soluble length of finite groups}, Israel J. Math. {207(2)} (2015), 507--525.

\bibitem{LP} L.~L\'evai, L.~Pyber, {Profinite groups with many commuting pairs or involutions}, Arch. Math. 75 (2000), 1--7.

\bibitem{Nach} L.~Nachbin,  The Haar Integral, Van Nostrand, Princeton, N.J-Toronto-London, 1965.
\bibitem{neumann} P.~M.~Neumann, Two Combinatorial Problems in Group Theory, Bull.
 London Math. Soc. { 21}  (1989), 456--458.

\bibitem{Rob} D.~J.~S.~Robinson, Finiteness conditions and generalized soluble groups. Part1. Springer-Verlag, New York-Berlin, 1972.
\bibitem{thompson} J.~G.~Thompson, Nonsolvable finite groups all of whose local subgroups are solvable,  Bull. Amer. Math. Soc., 74 (1968), 383--437.
\bibitem{Wilson}  J.~Wilson,  On the structure of compact torsion groups. Monatsh. Math. 96 (1983),
57--66.
\end{thebibliography}
\end{document}